\documentclass[12pt]{amsart}

\usepackage[utf8]{inputenc}
\usepackage{amsmath}
\usepackage{amssymb}
\usepackage{amsthm}
\usepackage{graphicx}
\usepackage{tabularx}
\usepackage{tabularray}
\usepackage{subcaption}

\DeclareMathOperator{\sgn}{sgn}
\newcommand\abs[1]{\left\lvert#1\right\rvert}

\newtheorem{theorem}{Theorem}
\newtheorem{lemma}{Lemma}
\newtheorem{definition}{Definition}
\newtheorem{remark}{Remark}
\newtheorem{example}{Example}
\newtheorem{exercise}{Exercise}
\newtheorem{corollary}{Corollary}

\title[On a special class of equidistant sets...]{On a special class of equidistant sets in the Euclidean space}
\author{\'{A}. Nagy}
\address{Institute of Mathematics, University of Debrecen, H-4002 Debrecen, P. O. Box 400, Hungary}
\email{abris.nagy@science.unideb.hu}
\author{M. Ol\'{a}h}
\address{Institute of Mathematics, University of Debrecen, H-4002 Debrecen, P. O. Box 400, Hungary \newline
\indent ELKH-DE Equations, Functions, Curves and their Applications Research Group}
\email{olah.mark@science.unideb.hu}
\author{M. Stoika}
\address{Department of Mathematics and Informatics, Ferenc Rakoczi II Transcarpathian Hungarian \newline
\indent University, 90200 Beregszász, P. O. Box 33, Transcarpathia, Ukraine}
\email{sztojka.miroszlav@kmf.org.ua}
\author{Cs. Vincze}
\address{Institute of Mathematics, University of Debrecen, H-4002 Debrecen, P. O. Box 400, Hungary}
\email{csvincze@science.unideb.hu}
\keywords{Equidistant sets, Equidistant functions.}
\subjclass{51M04}

\begin{document}
\begin{abstract}
An equidistant set in the Euclidean space consists of points having equal distances to both members of a given pair of sets, called focal sets. Since there is no effective formula to compute the distance of a point and a set, it is hard to determine the points of an equidistant set in general. Fortunately, we can use computer-assisted methods due to the theorem of M. Ponce and P. Santib\'a\~nez about the (Hausdorff) convergence of the equidistant sets under the convergence of the focal sets \cite{PS}. One of the most effective ways of approximating an equidistant set is based on equidistant sets with finite focal sets. They form one of the most important special cases \cite{VVOFS}, see also \cite{VOL}.

In the paper we investigate equidistant sets that can be given as the graph of a function. They are called equidistant functions.  The conceptual model is presented in \cite{VVOF} and \cite{NOSV}, where one of the focal sets is the horizontal hyperplane through the origin and the other one is the epigraph of a positive-valued, continuous function. In this case the equidistant points form the graph of another function over the hyperplane. Under certain conditions, such as convexity and differentiability, we can give effective parametric expressions of the equidistant points together with the characterization of the corresponding equidistant function. In a general situation, the hyperplane is the first-order (linear) approximation for one of the focal sets to provide a sufficiently simple starting point for some explicit computations (at least locally). A natural idea is to substitute the hyperplane by a circle (sphere) as a second-order (quadratic) approximation for one of the focal sets in more complicated cases. Such a generalization results in a new type of equidistant functions we are going to investigate in the present paper. 

Before considering the special cases in detail, we present some general observations motivated by preparing the case of the circle (sphere):  a necessary and sufficient condition for the existence of equidistant points along the vertical lines, upper/lower equidistant functions, equidistant functions, a necessary and sufficient condition for the existence of the equidistant function, equidistant functions and the minimum operator (a kind of commuting property). Examples will also be given to illustrate how complicated an equidistant set can be even in case of choosing relatively simple focal sets. The pathological examples show that additional restrictions are necessary to have computable classes of equidistant sets. In the paper we investigate the following special case: one of the focal sets is the epigraph of a positive-valued, convex and continuously differentiable function and the focal set under the epigraph is a circle (sphere) centered at the origin. We give effective parametric expressions of the equidistant points and, assuming higher-order differentiability, the corresponding equidistant function will also be characterized as a particular solution of the characterization problem due to M. Ponce and P. Santib\'{a}\~{n}ez \cite{PS}.
\end{abstract}
\maketitle

\section{Introduction: notations and preliminaries}

Let $K\subset \mathbb{R}^n$ be a subset in the Euclidean coordinate space. The distance between a point $x\in \mathbb{R}^n$ and $K$ is measured by the usual infimum formula
$$d(x, K) := \inf  \{ d(x, y) \ | \ y\in K \},$$
where 
$$d(x,y)=|x-y|=\sqrt{(x^1-y^1)^2+\ldots+(x^n - y^n)^2}$$
is the distance induced by the canonical inner product. The distance-measuring function is Lipschitz continuous provided that $K$ is a nonempty, closed subset in $\mathbb{R}^n$. In particular, 
$$|d(x_1, K)-d(x_2, K)|\leq d(x_1, x_2)$$
for any $x_1$, $x_2\in \mathbb{R}^n$, as a simple application of the triangle inequality shows \cite{NOSV}. Let us define the \textbf{equidistant set} of the focal sets $K$ and $L\subset \mathbb{R}^n$ as the set 
$$\{K=L\}:=\{x\in \mathbb{R}^n\ | \ d(x, K)=d(x, L)\}$$
all of whose points have the same distances from both $K$ and $L$. Equidistant sets\footnote{"We find equidistant sets as conventionally
defined frontiers in territorial domain controversies: for instance, the United Nations
Convention on the Law of the Sea (Article 15) establishes that, in absence of any previous agreement, the delimitation of the territorial sea between countries occurs exactly on the median line every point of which is equidistant of the nearest points to each country"; for the citation see \cite{PS}.} can also be considered as a generalization of conics \cite{PS}. They are often called midsets. The systematic investigations have been started by Wilker's and Loveland's fundamental works \cite{Loveland} and 
\cite{Wilker}. The points of an equidistant set are difficult to determine in general because there is no effective formula to compute the distance between a point and a set. According to the theorem of M. Ponce and P. Santib\'a\~nez about the (Hausdorff) convergence of the equidistant sets under the convergence of the focal sets \cite{PS}, special classes of equidistant sets allow us to approximate the equidistant set in more complicated cases. One of the most effective ways of the approximation is based on equidistant sets with finite focal sets. They form one of the most important special cases \cite{VVOFS}, see also \cite{VOL}.

In the paper we investigate equidistant sets that can be given as the graph of a function. They are called equidistant functions.  The conceptual model is presented in \cite{VVOF} and \cite{NOSV}, where one of the focal sets is the horizontal hyperplane through the origin and the other one is the epigraph of a positive-valued, continuous function. In this case the equidistant points form the graph of another function over the hyperplane. Under certain conditions, such as convexity and diffe\-ren\-tiability, we can give effective parametric expressions of the equidistant points together with the characterization of the corresponding equidistant function as well. In a general situation, the hyperplane is the first-order (linear) approximation for one of the focal sets to provide a sufficiently simple starting point for some explicit computations (at least locally). A natural idea is to substitute the hyperplane by a circle (sphere) as a second-order (quadratic) approximation for one of the focal sets in more complicated cases. Such a generalization results in a new type of equidistant functions we are going to investigate in the paper. 

Before considering the special cases in detail, we present some general observations motivated by preparing the case of the circle (sphere). These observations apply to cases that have in common that one of the focal sets is the epigraph of a continuous function $f\colon \mathbb{R}^n\to \mathbb{R}$. The other focal set $K\subset \mathbb{R}^{n+1}$ is closed and disjoint to the epigraph of $f$. We give a necessary and sufficient condition for the existence of equidistant points along the vertical line through any $x\in \mathbb{R}^n$ (Theorem 1). We also have bounds for the last coordinates of the equidistant points. Therefore, we can introduce the so-called upper/lower equidistant functions. It turns out that they are upper/lower semi-continuous functions. If the upper and lower equidistant functions coincide, then we speak about the equidistant function belonging to the focal sets. As a general observation, we prove a necessary and sufficient condition for the existence of the equidistant function under an additional condition of convexity for $K$ (Theorems 3-4, Corollary 8). We shall see that the result is a generalization of the case when $K$ is a circle (sphere) centered at the origin and the radius is small enough for the focal sets to be disjoint. Another general observation is about the equidistant function belonging to the pointwise minima of finitely many positive-valued continuous functions (Theorem 2). It is the pointwise minima of the corresponding equidistant functions. 

Examples will also be given to illustrate how complicated an equidistant set can be even in case of choosing relatively simple focal sets. The pathological examples show that additional restrictions are necessary to have computable classes of equidistant sets. In the paper we investigate the following special case: one of the focal sets is the epigraph of a positive-valued, convex and continuously differentiable function and the focal set under the epigraph is a circ\-le (sphere) centered at the origin. We give effective parametric expressions of the equidistant points (Theorems 5-8, Theorems 10-11) and, assuming higher-order differentiability, the corresponding equidistant function will also be characterized (Theorem 9, Theorem 12) as a particular solution of the characterization problem due to M. Ponce and P. Santib\'{a}\~{n}ez \cite{PS}.

\section{General observations}

In what follows, we consider $\mathbb{R}^{n+1}$ as the Cartesian product $\mathbb{R}^{n+1}=\mathbb{R}^n\times \mathbb{R}$. Elements of the form $(x,y)\in \mathbb{R}^n\times \mathbb{R}$ with positive/negative last coordinates form the positive/negative half space. Let 
$$L:=\{(x,y)\ | \ f(x)\leq y\}$$
be the epigraph of a continuous function $f\colon \mathbb{R}^n\to \mathbb{R}$ and let $K\subset \mathbb{R}^{n+1}$ be a nonempty, closed subset such that $K\cap L=\emptyset$.

\begin{lemma}
\label{lem:ex}
If there exists a point $(x_*,y_*)\in K$ such that $y_*< f(x)$ for all $x\in\mathbb{R}^n$, then for any $x\in \mathbb{R}^n$ there exists an element $y\in \mathbb{R}$ such that
	\[d((x,y), K)=d((x,y), L)
\]and
	\[\min\left\{\frac{|x-x_*|^2+y_*^2-u^2}{2(y_*-u)},y_*\right\}\leq y<f(x)
\]holds for any equidistant point $(x,y)$, where
	\[u=\min\left\{f(s)\,\big|\,|x-s|\leq |x-x_*|\right\}.
\]
\end{lemma}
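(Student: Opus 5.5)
The plan is to study the continuous function
$$g(y):=d((x,y),K)-d((x,y),L),\qquad y\in\mathbb{R},$$
along the vertical line through a fixed $x\in\mathbb{R}^n$. Both distance functions are Lipschitz, so $g$ is continuous. Existence of an equidistant point will follow from the intermediate value theorem once I show that $g>0$ at $y=f(x)$ and $g<0$ for all sufficiently negative $y$. The two bounds on the last coordinate come from the same sign analysis, since $g$ has a strict sign outside the claimed interval.

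\textbf{Upper side.} If $y\geq f(x)$, then $(x,y)\in L$, so $d((x,y),L)=0$. Since $K$ is closed and $(x,y)\notin K$ (because $K\cap L=\emptyset$), we get $d((x,y),K)>0$. Hence $g(y)>0$, so every equidistant point satisfies $y<f(x)$, and in particular $g(f(x))>0$.

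\textbf{Lower side.} First, $u$ is well defined because $f$ is continuous on the compact ball $|x-s|\leq |x-x_*|$. By the hypothesis on $(x_*,y_*)$ we have $u>y_*$, so $y_*-u<0$. Set
$$m:=\min\left\{\frac{|x-x_*|^2+y_*^2-u^2}{2(y_*-u)},\,y_*\right\}.$$
Take any $y<m$. Trivially
$$d((x,y),K)^2\leq |x-x_*|^2+(y-y_*)^2.$$
Since $L$ is closed, $d((x,y),L)$ is attained at some point $(s,t)\in L$, and I split into two cases according to where $s$ lies.

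\emph{Case $|x-s|>|x-x_*|$.} Here $t\geq f(s)>y_*>y$, so $(t-y)^2>(y_*-y)^2$. Therefore
$$|x-s|^2+(t-y)^2>|x-x_*|^2+(y-y_*)^2.$$

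\emph{Case $|x-s|\leq|x-x_*|$.} Here $t\geq f(s)\geq u$, so the distance is at least $u-y>0$, using $y<y_*<u$. The inequality $(u-y)^2>|x-x_*|^2+(y-y_*)^2$ expands to
$$2y(y_*-u)>|x-x_*|^2+y_*^2-u^2.$$
Because $y_*-u<0$, this is equivalent to $y<\frac{|x-x_*|^2+y_*^2-u^2}{2(y_*-u)}$, which holds since $y<m$.

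In both cases $d((x,y),L)>d((x,y),K)$, so $g(y)<0$ for every $y<m$. This gives the lower bound $m\leq y$ for equidistant points. Combined with $g(f(x))>0$, the intermediate value theorem on $[y_0,f(x)]$ for any $y_0<m$ yields an equidistant point.

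The main point needing care is strictness in the lower estimate. I need $d((x,y),L)$ to be strictly larger, not just at least as large, and an infimum alone could lose the strict inequality. I handle this by using that the infimum over the closed set $L$ is attained at a nearest point, and then applying the strict case-wise inequality at that point.
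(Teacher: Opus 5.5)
Your proposal is correct and follows essentially the paper's argument. It uses the same continuous difference function along the vertical line, the same sign at $y\geq f(x)$, and the same case split on $|x-s|$ versus $|x-x_*|$ for $y<m$. The only cosmetic difference is that you test the nearest point of $L$ against the ball centered at $(x,y)$ through $(x_*,y_*)$, whereas the paper shows that this whole closed ball misses $L$; your version makes the strict inequality $d((x,y),L)>d((x,y),K)$ more explicit.
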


\begin{proof} We are going to use a continuity argument as follows. Let $x\in \mathbb{R}^n$ be an arbitrary point and consider the continuous function
\begin{equation}
\label{difference:01}
d_{x}(y)=d((x,y), K)-d((x,y), L).
\end{equation}
Since $K$ and $L$ are disjoint, closed sets, if $y\geq f(x)$, then $(x,y)\notin K$ and hence
	\[d_x(y)=d((x,y), K)-d((x,y), L)=d((x,y), K)>0.
\]On the other hand, $f$ is a continuous function, thus it attains its minimum on any compact subset of $\mathbb{R}^n$. Taking
	\[u=\min\left\{f(s)\,\big|\,|x-s|\leq |x-x_*|\right\},
\] we have that $y_*<u$. If
	\[y<\min\left\{\frac{|x-x_*|^2+y_*^2-u^2}{2(y_*-u)},y_*\right\},
\]then
	\[2yy_*-2uy>|x-x_*|^2+y_*^2-u^2
\]
and, consequently, 
	\[u^2-2uy>|x-x_*|^2-2yy_*+y_*^2 \ \ \Rightarrow \ \ (u-y)^2>|x-x_*|^2+(y-y_*)^2.
\]Consider the ball
	\[B=\left\{(s,t)\in\mathbb{R}^{n+1}\,\big|\,|s-x|^2+(t-y)^2\leq |x-x_*|^2+(y-y_*)^2\right\}
\]centered at $(x,y)$. If $(s,t)\in B$ and $|s-x|>|x-x_*|$, then $|t-y|<|y-y_*|=y_*-y$ and thus $t-y<y_*-y$, which implies $t<y_*$. This shows that such a point of $B$ cannot be an element of $L$, because $y_*<f(x)$ for all $x\in\mathbb{R}^n$. On the other hand, if $(s,t)\in B$ and $|s-x|\leq |x-x_*|$ then
	\[(t-y)^2\leq |s-x|^2+(t-y)^2\leq |x-x_*|^2+(y-y_*)^2< (u-y)^2
\]as we have seen above, hence $|t-y|<|u-y|$. Here $y<y_*<u$, thus $|t-y|<u-y$, which implies $t<u$. This shows that such a point of $B$ cannot be an element of $L$, because $u\leq f(s)$, when $|x-s|\leq |x-x_*|$.

All together we see that, if
	\[y<\min\left\{\frac{|x-x_*|^2+y_*^2-u^2}{2(y_*-u)},y_*\right\},
\]then the ball $B$ contains the point of $(x_*,y_*)\in K$, but doesn't intersect $L$. Thus $d_x(y)=d((x,y), K)-d((x,y), L)<0$. Since $d_x$ is a continuous function and $d_x(y)>0$ whenever $y\geq f(x)$, there must be a real number $y\in\mathbb{R}$ such that $d_x(y)=0$, which means that $d((x,y), K)=d((x,y), L)$. Furthermore, if a point $(x,y)$ doesn't satisfy
	\[\min\left\{\frac{|x-x_*|^2+y_*^2-u^2}{2(y_*-u)},y_*\right\}\leq y<f(x)
\]then either $d((x,y), K)>d((x,y), L)$, or $d((x,y), K)<d((x,y), L)$, thus it cannot be an equidistant point.
\end{proof}

\begin{theorem} Let $K\subset \mathbb{R}^{n+1}$ be a compact set. The vertical line through $x\in \mathbb{R}^n$ contains an equidistant point for all $x\in \mathbb{R}^n$ if and only if there exists a point $(x_*,y_*)\in K$ such that $y_*< f(x)$ for all $x\in\mathbb{R}^n$.
\end{theorem}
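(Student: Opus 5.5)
The sufficiency direction is exactly Lemma~\ref{lem:ex}: if some $(x_*,y_*)\in K$ satisfies $y_*<f(x)$ for all $x\in\mathbb{R}^n$, then every vertical line contains an equidistant point. Compactness of $K$ is not needed here. So the work is in the necessity direction, which I would prove by contraposition.

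Assume that every point $(x_*,y_*)\in K$ fails the condition. Then for each $(x_*,y_*)\in K$ there is some $s\in\mathbb{R}^n$ with $f(s)\le y_*$. I want to find an $x$ whose vertical line contains no equidistant point, i.e.\ for which $d_x(y)>0$ for every $y\in\mathbb{R}$, with $d_x$ as in (\ref{difference:01}).

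For $y\ge f(x)$ we already know $d_x(y)>0$, as in the proof of Lemma~\ref{lem:ex}. So the goal is to choose $x$ far away, and show that points $(x,y)$ with $y<f(x)$ are strictly closer to $L$ than to $K$.

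The natural idea uses compactness. Let $m=\min\{t \mid (s,t)\in K\}$ be the minimal last coordinate on $K$, attained at a point $(x_0,m)$. By assumption there is $s_0$ with $f(s_0)\le m$. Because $K\cap L=\emptyset$, every $(s,t)\in K$ has $t<f(s)$; in particular $f(x_0)>m$.

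Now consider the vertical line over $s_0$ itself, i.e.\ take $x=s_0$, and fix $y<f(s_0)$. Then $d((s_0,y),L)\le f(s_0)-y$, since $(s_0,f(s_0))\in L$. For any $(s,t)\in K$ we have $t\ge m\ge f(s_0)$. Hence
$$d((s_0,y),(s,t))\ge t-y\ge f(s_0)-y,$$
with equality only if $s=s_0$ and $t=f(s_0)$. But then $(s_0,f(s_0))\in K\cap L$, which is impossible. Since $K$ is compact, the infimum $d((s_0,y),K)$ is attained, so the inequality is strict: $d((s_0,y),K)>f(s_0)-y\ge d((s_0,y),L)$.

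This gives $d_{s_0}(y)>0$ for all $y<f(s_0)$, and together with the case $y\ge f(s_0)$, the vertical line through $s_0$ contains no equidistant point. This contradicts the hypothesis.

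The main subtlety is identifying the right point: the lowest point of $K$, which is where compactness enters, together with the strictness argument. Strictness relies on attainment of the distance to the compact set $K$ and on the disjointness of $K$ and $L$.

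Finally, the contrapositive gives the necessity direction: if every vertical line contains an equidistant point, then the lowest point $(x_0,m)$ of $K$ must satisfy $m<f(x)$ for all $x$, and it serves as the required $(x_*,y_*)$.
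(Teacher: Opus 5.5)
Your proof is correct and takes essentially the same route as the paper. Sufficiency comes from Lemma~\ref{lem:ex}. For necessity you take the lowest point $(x_*,y_*)$ of the compact $K$ and a point $x'$ with $f(x')\le y_*$. You then show $d((x',y'),K)>f(x')-y'\ge d((x',y'),L)$ for $y'<f(x')$, with strictness coming from $K\cap L=\emptyset$ and attainment of the distance on the compact set $K$; this is step for step the paper's argument.
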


\begin{proof}
If there exists a point $(x_*,y_*)\in K$ such that $y_*< f(x)$ for all $x\in\mathbb{R}^n$, then Lemma \ref{lem:ex} shows that every vertical line intersects the equidistant set $\{K=L\}$. Now assume that for any point $(x,y)\in K$ there exist $x'\in\mathbb{R}^n$ such that $f(x')\leq y$. Since $K$ is compact, there exists a point $(x_*,y_*)\in K$ such that $y_*$ is minimal. Then there exists $x'\in\mathbb{R}^n$ such that $f(x')\leq y_*$. If $y'\geq f(x')$, then $(x',y')\in L$, but $(x',y')\notin K$ because $K$ and $L$ are disjoint. Thus $d\left((x',y'),L\right)=0$ and $d\left((x',y'),K\right)>0$, and hence $(x',y')\notin \{K=L\}$. If $y'<f(x')$, then
	\[y'<f(x')\leq y_*\leq y
\]for any point $(x,y)\in K$. Thus
	\[d\big((x,y),(x',y')\big)=\sqrt{|x-x'|^2+(y-y')^2}\geq \sqrt{|x-x'|^2+(f(x')-y')^2}> |f(x')-y'|
\]
because $x=x'$ would imply that $(x,y)\in K\cap L$. On the other hand, 
	\[|f(x')-y'|=d\big((x',y'),(x',f(x'))\big)\geq d\big((x',y'),L\big).
\]This shows that $d\big((x,y),(x',y')\big)>d\big((x',y'),L\big)$ for any point $(x,y)\in K$. Since $K$ is compact, this implies that $d\big((x',y'),K\big)>d\big((x',y'),L\big)$. Thus $(x',y')\notin \{K=L\}$, and therefore the vertical line through $x'\in\mathbb{R}$ doesn't contain any point of the equidistant set.
\end{proof}

We note that if there exists a point $(x_*,y_*)\in K$ such that $y_*< f(x)$ for all $x\in\mathbb{R}^n$, then the function $f$ is bounded from below and we can choose a new coordinate system such that $f$ is strictly positive. Let's assume, in the remainder of this section, that 
\begin{itemize}
\item $L$ is the epigraph of a continuous, positive-valued function $f\colon \mathbb{R}^n\to \mathbb{R}^+$, 
\item $K\subset \mathbb{R}^{n+1}$ is a nonempty, closed subset such that $K\cap L=\emptyset$ and there exists a point $(x_*,y_*)\in K$ such that $y_*\leq 0$.
\end{itemize}

\begin{definition} The upper/lower equidistant function associated with $K$ and $L$ is defined as
$$G^+(x):=\sup \ \{ y\in \mathbb{R}\ | \ d((x,y),K)=d((x,y),L)\}$$
and
$$G^-(x):=\inf \ \{ y\in \mathbb{R}\ | \ d((x,y),K)=d((x,y),L)\},$$
respectively. The equidistant function associated with $K$ and $L$ is defined by the formula
$$d((x, G(x)),K)=d((x,G(x)),L)$$
provided that the equidistant point is uniquely determined for any $x\in \mathbb{R}^n$. 
\end{definition}

\begin{corollary}
\label{basic:01}
Inequalities $y<G^-(x)$ and $G^+(x)<y$ imply that
$$d((x,y), K)<d((x,y), L)\ \ \textrm{and}\ \ d((x,y), K)>d((x,y), L),$$
respectively.
\end{corollary}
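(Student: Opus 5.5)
The plan is a continuity argument along the vertical line through $x$, using the function $d_x(y)=d((x,y),K)-d((x,y),L)$ from (\ref{difference:01}) together with the sign information already established in the proof of Lemma \ref{lem:ex}. First, $d_x$ is continuous because both distance functions are Lipschitz continuous. Second, $d_x(y)>0$ for all $y\geq f(x)$, since $(x,y)\in L\setminus K$ there. Third, $d_x(y)<0$ for all $y$ below the threshold
\[m(x)=\min\left\{\frac{|x-x_*|^2+y_*^2-u^2}{2(y_*-u)},y_*\right\}.\]
In the standing assumptions we have $y_*\leq 0<f$, so Lemma \ref{lem:ex} applies. In particular the set $Z_x=\{y\mid d_x(y)=0\}$ is nonempty, closed, and contained in $[m(x),f(x))$. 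Hence $G^-(x)=\min Z_x$ and $G^+(x)=\max Z_x$ are finite, attained, and satisfy $m(x)\leq G^-(x)\leq G^+(x)<f(x)$.

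\emph{The case $y<G^-(x)$.} Here $d_x$ has no zero on the interval $(-\infty,G^-(x))$, since $G^-(x)$ is the infimum of the zeros. This interval is connected and $d_x$ is continuous, so by the intermediate value theorem $d_x$ has constant sign on it. Points $y<m(x)$ lie in the interval and satisfy $d_x(y)<0$. Therefore $d_x(y)<0$ throughout, that is, $d((x,y),K)<d((x,y),L)$.

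\emph{The case $y>G^+(x)$.} Symmetrically, $d_x$ has no zero on $(G^+(x),\infty)$, so it has constant sign there. Points $y\geq f(x)$ lie in this interval and satisfy $d_x(y)>0$. Hence $d((x,y),K)>d((x,y),L)$ for every $y>G^+(x)$.

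The only delicate point is making sure the sign on each unbounded ray is actually pinned down, so that ``no zero'' can be upgraded to ``the correct sign''. For this I would cite explicitly that the proof of Lemma \ref{lem:ex} exhibits a negative value of $d_x$ below $m(x)$ and positive values above $f(x)$. Once those two facts are in hand, the rest is the intermediate value theorem.
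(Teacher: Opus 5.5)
Your proof is correct and takes essentially the same route as the paper. The paper also argues that the continuous function $d_x$ cannot change sign on $(-\infty,G^-(x))$ or on $(G^+(x),\infty)$, and fixes the sign using $d_x>0$ at and above $f(x)$ and $d_x<0$ below the lower bound from the proof of Lemma~\ref{lem:ex}. Your extra remarks make explicit what the paper leaves implicit: the zero set is closed and nonempty, $G^\pm(x)$ are finite, and each ray contains one of the reference points.
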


\begin{proof} Consider the continuous function $d_x$ defined by formula \eqref{difference:01}. To avoid zero's (equidistant points) above $G^+(x)$/below $G^-(x)$, the function $d_x$ preserves its sign above $G^+(x)$/below $G^-(x)$. Since $d_x$ takes a positive value at the upper bound $f(x)$ for the last coordinate of the equidistant points at $x\in\mathbb{R}^n$, the sign-preserving property shows that
$$0<d_x(y)=d((x,y),K)-d((x,y),L)$$ 
holds for any $y>G^+(x)$. Let
	\[h(x)=\min\left\{\frac{|x-x_*|^2+y_*^2-u^2}{2(y_*-u)},y_*\right\}
\] be the lower bound for the last coordinate of the equidistant points at $x\in\mathbb{R}^n$. As we have seen in the proof of Lemma \ref{lem:ex}, if $y<h(x)$ then $d_x(y)<0$ and the sign-preserving property shows that
$$0>d_x(y)=d((x,y),K)-d((x,y),L)$$ 
holds for any $y< G^-(x)$. 
\end{proof}

\begin{corollary} 
\label{basic:02}
If the equidistant point is uniquely determined at $x\in \mathbb{R}^n$, then inequalities $y<G(x)$ and $G(x)<y$ imply that
$$d((x,y), K) < d((x,y), L)\ \ \textrm{and}\ \ d((x,y), K) > d((x,y), L),$$
respectively. 
\end{corollary}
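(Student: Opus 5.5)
This is an immediate specialization of Corollary \ref{basic:01}. If the equidistant point on the vertical line through $x$ is uniquely determined, then the set
$$\{ y\in \mathbb{R}\ | \ d((x,y),K)=d((x,y),L)\}$$
is a singleton. It is nonempty by Lemma \ref{lem:ex}, which applies in the present setting: there is a point $(x_*,y_*)\in K$ with $y_*\leq 0<f(s)$ for all $s\in\mathbb{R}^n$. Hence its supremum and infimum coincide with its only element, so
$$G^+(x)=G^-(x)=G(x).$$
Substituting this into Corollary \ref{basic:01} gives the two claims directly. The inequality $y<G(x)=G^-(x)$ yields $d((x,y),K)<d((x,y),L)$, and $y>G(x)=G^+(x)$ yields $d((x,y),K)>d((x,y),L)$.

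The reasoning can also be spelled out with the continuous function $d_x$ from \eqref{difference:01}, as a sign argument:
\begin{itemize}
\item By uniqueness, $d_x$ has exactly one zero, namely $G(x)$.
\item By the intermediate value theorem, $d_x$ has constant sign on each of the intervals $(-\infty,G(x))$ and $(G(x),\infty)$.
\item For $y\geq f(x)$ the proof of Lemma \ref{lem:ex} gives $d_x(y)>0$. By the upper bound of Lemma \ref{lem:ex}, $f(x)>G(x)$, so the sign on the upper interval is positive.
\item For $y$ below the lower bound $h(x)$ from the proof of Corollary \ref{basic:01}, we have $d_x(y)<0$. Since $h(x)\leq G(x)$, the sign on the lower interval is negative.
\end{itemize}

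There is no real obstacle here. The only point needing care is that the sup/inf in the definition of $G^\pm$ are taken over a nonempty set, which Lemma \ref{lem:ex} guarantees; uniqueness then forces $G^+=G^-=G$.
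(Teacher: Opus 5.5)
Your proposal is correct and takes the same route the paper intends. The paper states this corollary without proof, as an immediate consequence of Corollary~\ref{basic:01}: uniqueness, together with existence from Lemma~\ref{lem:ex}, forces $G^+(x)=G^-(x)=G(x)$, an identification the paper itself uses in the proof of Corollary~\ref{basic:03}. Your supplementary sign argument with $d_x$ simply re-runs the proof of Corollary~\ref{basic:01} in this special case.
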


\begin{corollary} 
\label{basic:03}
If the equidistant point is uniquely determined at $x\in \mathbb{R}^n$, then inequalities 
$$d((x,y), K) < d((x,y), L)\ \ \textrm{and}\ \ d((x,y), K) > d((x,y), L)$$
imply that $y<G(x)$ and $G(x)<y$, respectively. 
\end{corollary}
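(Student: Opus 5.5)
This is the contrapositive of Corollary \ref{basic:02}, combined with the trichotomy of real numbers. I would argue by cases on the position of $y$ relative to $G(x)$.

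Suppose first that $d((x,y),K)<d((x,y),L)$. Then $(x,y)$ is not an equidistant point, so $y\neq G(x)$, because $G(x)$ is the unique equidistant height over $x$. If $G(x)<y$ held, Corollary \ref{basic:02} would give $d((x,y),K)>d((x,y),L)$, contradicting the assumption. Hence $y<G(x)$.

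The second case is symmetric. If $d((x,y),K)>d((x,y),L)$, then again $y\neq G(x)$. The alternative $y<G(x)$ would force $d((x,y),K)<d((x,y),L)$ by Corollary \ref{basic:02}, so $G(x)<y$.

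I expect no real obstacle here. The only point to keep in mind is that uniqueness at $x$ is what makes $G(x)$ well defined and equal to both $G^+(x)$ and $G^-(x)$. That equality is exactly what lets Corollary \ref{basic:02}, and behind it Corollary \ref{basic:01}, apply on both sides of $G(x)$.
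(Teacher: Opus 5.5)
Your proposal is correct and follows essentially the same route as the paper. Both arguments use trichotomy, rule out $y=G(x)$ because $(x,y)$ is not equidistant, and exclude the wrong strict inequality using the sign of $d_x$ on either side of $G(x)$; the only cosmetic difference is that the paper invokes Corollary \ref{basic:01} directly with $G^+(x)=G(x)$, whereas you cite its restatement, Corollary \ref{basic:02}.
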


\begin{proof}
Suppose that $d((x,y), K) < d((x,y), L)$. We have three possible cases: $G(x)<y$, $G(x)=y$ or $y<G(x)$. At first, by Corollary \ref{basic:01}, it follows that $G^+(x)=G(x)<y$ implies the inequality $d((x,y), K) > d((x,y), L)$, which is a contradiction. $G(x)=y$ is also impossible because $(x,y)$ is not an equidistant point (it is closer to $K$ than to $L$). The only possible choice is $y<G(x)$ as was to be proved. The rest of the statement can be proved in a similar way. 
\end{proof}

\begin{corollary}
The upper/lower equidistant function associated with $K$ and $L$ is upper/lower semi-continuous.
\end{corollary}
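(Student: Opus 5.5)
We prove upper semi-continuity of $G^+$; the statement for $G^-$ follows by the symmetric argument with the inequalities reversed. Fix $x_0\in\mathbb{R}^n$ and $\varepsilon>0$. We need a neighbourhood $U$ of $x_0$ with $G^+(x)<G^+(x_0)+\varepsilon$ for all $x\in U$. The tool is the function
$$D(x,y)=d((x,y),K)-d((x,y),L),$$
which is continuous on $\mathbb{R}^{n+1}$ because both distance functions are $1$-Lipschitz. We combine it with the sign information of Corollary \ref{basic:01} and with the a priori bound $y<f(x)$ for equidistant points from Lemma \ref{lem:ex}.

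First, set $y_1=G^+(x_0)+\varepsilon/2$. Corollary \ref{basic:01} gives $D(x_0,y)>0$ for every $y\ge y_1$, but on an unbounded ray this does not yet give a uniform positive lower bound. So we work on the compact segment $[y_1,M]$, where $M=\max\{f(s)\mid |s-x_0|\le 1\}$. On this segment $D(x_0,\cdot)$ is continuous and positive, so it has a positive minimum $m$. Since $D$ is jointly Lipschitz, $|D(x,y)-D(x_0,y)|\le 2|x-x_0|$. Hence for $|x-x_0|<\min\{1,m/2\}$ we get $D(x,y)>0$ for all $y\in[y_1,M]$.

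Next, for such $x$ and for $y>M$ we have $y>M\ge f(x)$. So $(x,y)\in L$, and disjointness of $K$ and $L$ gives $D(x,y)=d((x,y),K)>0$; this is the bound of Lemma \ref{lem:ex} used uniformly near $x_0$. Therefore no equidistant point $(x,y)$ has $y\ge y_1$, and so $G^+(x)\le y_1<G^+(x_0)+\varepsilon$. This is exactly upper semi-continuity at $x_0$. Note that $G^+(x_0)$ is finite, since Lemma \ref{lem:ex} guarantees an equidistant point on every vertical line and bounds them above by $f(x_0)$.

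For $G^-$ we take $y_1=G^-(x_0)-\varepsilon/2$. Corollary \ref{basic:01} gives $D(x_0,y)<0$ for $y\le y_1$. The lower cutoff comes from the explicit lower bound $h(x)$ of Lemma \ref{lem:ex}, below which $D<0$. The main obstacle is making $h$ locally bounded below near $x_0$. The quantity $u(x)=\min\{f(s)\mid |x-s|\le|x-x_*|\}$ is not obviously continuous. It is, however, bounded between $\min f$ on a slightly larger ball and $f(x_*)$-type bounds, and it stays bounded away from $y_*$ on a neighbourhood. That suffices to give a uniform $N$ with $h(x)\ge N$ for $|x-x_0|<1$. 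Here positivity of $f$ and $y_*\le 0$ ensure $u>0\ge y_*$, so the denominator $2(y_*-u)$ stays away from $0$.

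With this $N$, the argument repeats on the compact segment $[N,y_1]$ with a negative maximum of $D(x_0,\cdot)$ and the Lipschitz estimate. This yields $G^-(x)\ge y_1>G^-(x_0)-\varepsilon$ for $x$ near $x_0$.
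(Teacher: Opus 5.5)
Your proof is correct, but it takes a different route from the paper's.

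\textbf{The paper's argument.} It is sequential. For $x_n\to x$ it notes that $G^\pm(x_n)$ are bounded by Lemma~\ref{lem:ex}. It then uses that every accumulation point of these values yields an equidistant point over $x$. This relies implicitly on two facts: $(x_n,G^\pm(x_n))$ are themselves equidistant points, since the zero set of $d_{x_n}$ is closed and bounded; and the difference of the two distance functions is jointly continuous. From this, $\limsup G^+(x_n)\le G^+(x)$ and $\liminf G^-(x_n)\ge G^-(x)$ follow at once.

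\textbf{Your argument.} You give a direct $\varepsilon$--$\delta$ proof. You take the sign information of Corollary~\ref{basic:01} at $x_0$ and the positive minimum (negative maximum) of $D(x_0,\cdot)$ on a compact segment. You propagate it to nearby $x$ by $|D(x,y)-D(x_0,y)|\le 2|x-x_0|$. The unbounded parts are handled by explicit cutoffs: $y\ge f(x)$ above and $y<h(x)$ below.

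\textbf{What each buys.} The paper's version is shorter and treats $G^+$ and $G^-$ in one stroke via closedness of $\{K=L\}$ and Bolzano--Weierstrass. Yours is more quantitative, since it produces an explicit neighbourhood radius. It also never needs the supremum or infimum to be attained.

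\textbf{The shared point, which you should write out.} Both proofs depend, for $G^-$, on $h$ being locally bounded below. The paper takes this for granted when it says the values are ``bounded''. You address it, but only in sketch form. It can be made precise as follows. For $|x-x_0|<1$, the ball $\{s : |s-x|\le |x-x_*|\}$ lies in $\{s : |s-x_0|\le |x_0-x_*|+2\}$. Hence $u(x)\ge \mu:=\min\{f(s) : |s-x_0|\le |x_0-x_*|+2\}>0\ge y_*$. Writing the fraction in $h$ as $(u^2-y_*^2-|x-x_*|^2)/(2(u-y_*))$, its numerator is at least $-c$ with $c=y_*^2+(|x_0-x_*|+1)^2$, and its denominator is at least $2\mu$. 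This gives $h(x)\ge \min\{-c/(2\mu),\,y_*\}$ on that neighbourhood, which is exactly the uniform $N$ you need.
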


\begin{proof} Taking a sequence $\lim_{n\to \infty} x_n=x$, Lemma \ref{lem:ex} implies that both $G^+(x_n)$ and $G^-(x_n)$ are bounded. Since the distance-measuring function is Lipschitz continuous, it follows that any accumulation point of both $G^+(x_n)$ and $G^-(x_n)$
is an equidistant point. Therefore
$$\limsup_{n\to \infty} G^+(x_n)\leq G^+(x)\ \ \textrm{and}\ \ \liminf_{n\to \infty} G^-(x_n)\geq G^-(x)$$
as was to be proved.
\end{proof}

\begin{corollary}
If the equidistant point is uniquely determined at $x\in \mathbb{R}^n$, then the upper/lower equidistant function associated with $K$ and $L$ is continuous at $x\in \mathbb{R}^n$.
\end{corollary}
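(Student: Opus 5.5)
If the equidistant point at $x$ is unique, then $G^+(x)=G^-(x)=G(x)$. So it suffices to prove two one-sided estimates: $\liminf_{k\to\infty} G^+(x_k)\geq G(x)$ and $\limsup_{k\to\infty} G^-(x_k)\leq G(x)$ for every sequence $x_k\to x$. Combined with the previous corollary (upper semi-continuity of $G^+$ and lower semi-continuity of $G^-$), these give
$$G(x)\leq \liminf G^\pm(x_k)\leq \limsup G^\pm(x_k)\leq G(x),$$
which is continuity of both functions at $x$.

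For the first estimate, pick any sequence $x_k\to x$ and consider the values $G^-(x_k)\leq G^+(x_k)$. By Lemma \ref{lem:ex}, the last coordinates of equidistant points over $x_k$ lie between $h(x_k)$ and $f(x_k)$. Here $h$ is the lower bound from the proof of Corollary \ref{basic:01}, built from $u$, a minimum of $f$ over a ball. Both bounds stay bounded as $x_k\to x$: $f$ is continuous, and $u$ stays away from $y_*$ because $f$ is positive and continuous on the compact ball of radius $|x-x_*|+1$ around $x$. So both sequences $G^{\pm}(x_k)$ are bounded.

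Next I show that every accumulation point is an equidistant point. The sets of equidistant points over $x_k$ are closed and bounded, so $G^\pm(x_k)$ are themselves equidistant values. Passing to a subsequence, $(x_k,G^\pm(x_k))\to(x,c)$. Since both distance functions are 1-Lipschitz, $d((x,c),K)=d((x,c),L)$. By uniqueness $c=G(x)$. Hence every accumulation point of $G^+(x_k)$ and of $G^-(x_k)$ equals $G(x)$, and together with boundedness this gives $\lim G^\pm(x_k)=G(x)$ directly. This is slightly stronger than what the semi-continuity route needs, and it gives the result immediately.

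The only step needing care is the uniform boundedness from below, i.e.\ keeping the lower bound $h(x_k)$ from tending to $-\infty$. I would handle it by noting that $u(x_k)\geq \min\{f(s) : |s-x|\leq |x-x_*|+1\}>0\geq y_*$ for large $k$. This keeps the denominator $2(y_*-u)$ away from zero and the numerator bounded, so $h$ is locally bounded. If $y_*=0$ happens to be the chosen point, the positivity of $f$ on compacts still gives $u>0=y_*$, so the argument goes through.
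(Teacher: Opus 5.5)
Your proof is correct and follows essentially the paper's route: the paper gives no separate proof here, since the corollary follows from the argument for the preceding semi-continuity corollary (boundedness via Lemma \ref{lem:ex}, and Lipschitz continuity forcing every accumulation point of $G^\pm(x_n)$ to be an equidistant point), and uniqueness then forces each such accumulation point to equal $G(x)$, exactly as you argue. Your explicit check that the lower bound $h(x_k)$ stays locally bounded (via $u(x_k)\geq \min\{f(s) : |s-x|\leq |x-x_*|+1\}>0\geq y_*$) fills in a detail the paper leaves implicit.
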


\begin{corollary}
If the equidistant point is uniquely determined  for any $x\in \mathbb{R}^n$, then the equidistant function associated with $K$ and 
$L$ is continuous.
\end{corollary}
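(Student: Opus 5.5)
The statement follows from the previous corollary, applied pointwise. Fix $x\in\mathbb{R}^n$. Uniqueness of the equidistant point over $x$ means that the set $\{y\in\mathbb{R}\ |\ d((x,y),K)=d((x,y),L)\}$ is a single point. It is nonempty by Lemma \ref{lem:ex}, and it is bounded between $h(x)$ and $f(x)$. Hence its supremum and infimum coincide:
$$G^+(x)=G^-(x)=G(x).$$
Once uniqueness holds at every point, $G$ is defined on all of $\mathbb{R}^n$ and $G=G^+=G^-$ identically. By the corollary on semi-continuity, $G^+$ is upper semi-continuous and $G^-$ is lower semi-continuous. A function that is both upper and lower semi-continuous at every point is continuous, so $G$ is continuous.

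To make this self-contained, I would spell out the sequential argument. Take $x_k\to x$.
\begin{itemize}
\item Lemma \ref{lem:ex} bounds $G(x_k)$ from above by $f(x_k)$, which is bounded near $x$ by continuity of $f$.
\item It bounds $G(x_k)$ from below by $h(x_k)=\min\{(|x_k-x_*|^2+y_*^2-u_k^2)/(2(y_*-u_k)),y_*\}$. Here $u_k$ is the minimum of $f$ over the ball of radius $|x_k-x_*|$ around $x_k$, and all these balls lie in one fixed compact set. So $u_k$ stays in a bounded interval strictly above $y_*$, and $h(x_k)$ is bounded below.
\end{itemize}
Thus $(G(x_k))$ is bounded. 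Let $y$ be any accumulation point along a subsequence. The distance functions to $K$ and to $L$ are Lipschitz, so passing to the limit in $d((x_k,G(x_k)),K)=d((x_k,G(x_k)),L)$ shows that $(x,y)$ is an equidistant point. By uniqueness $y=G(x)$. A bounded sequence whose only accumulation point is $G(x)$ converges to $G(x)$, so $G$ is continuous.

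The only step needing care is the uniform lower bound for $G(x_k)$ near $x$, since the lower bound in Lemma \ref{lem:ex} depends on $x$ through $u$. The control of $u_k$ via compactness described above handles it. Everything else is immediate from the preceding corollaries.
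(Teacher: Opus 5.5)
Your proposal is correct and follows the paper's route. The paper treats this corollary as immediate from $G=G^+=G^-$ together with the upper/lower semi-continuity of $G^{\pm}$, which is equivalent to applying the preceding pointwise corollary at every $x$. Your sequential argument, including the compactness bound on $u_k$ that makes $h(x_k)$ locally bounded below, simply spells out the boundedness step that the paper's semi-continuity proof attributes to Lemma~\ref{lem:ex} without detail.
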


\subsection{Equidistant functions and the minimum operator}

In what follows we are going to investigate the upper/lower equidistant functions belonging to the pointwise minima of finitely many positive-valued, continuous functions. Let $I$ be a finite, nonempty index set and consider the family $\left\{f_i\mid i\in I\right\}$ of positive-valued, continuous functions defined on $\mathbb{R}^n$. The focal set $K\subset \mathbb{R}^{n+1}$ in common is a nonempty, closed subset such that $K$ extends into the negative (closed) half-space and $K\cap L_i=\emptyset$, where 
$$L_i:=\{(x,y)\ | \ f_i(x)\leq y\}$$
is the epigraph of the function $f_i$ ($i\in I$). Let us define the function  
$$f_{\min}\colon \mathbb{R}^n\to\mathbb{R},\quad f_{\min}(x)=\min \left\{f_i(x)\mid i\in I\right\}$$
by taking the pointwise minima. It is easy to see that the epigraph of $f_{\min}$ can be given as $L=\cup_{i\in I\ } L_i$ and $K\cap L=\emptyset$.

\begin{theorem}
\label{basic:01c} If $G_{\min}^+$ and $G_{\min}^-$ are the upper and lower equidistant functions associated with $K$ and $L$, then
$$\min \left\{G_i^-(x)\mid i\in I\right\}\leq G_{\min}^-(x)\leq G_{\min}^+(x)\leq \min \left\{G_i^+(x)\mid i\in I\right\}$$
for any $x\in\mathbb{R}^n$, where $G^+_i$ and $G_i^-$ are the upper and lower equidistant functions associated with $K$ and $L_i$ $(i\in I)$.
\end{theorem}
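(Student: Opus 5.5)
The middle inequality $G_{\min}^-(x)\leq G_{\min}^+(x)$ is immediate. Lemma \ref{lem:ex} applies because $K$ contains a point with nonpositive last coordinate while $f_{\min}>0$, so the equidistant set of $K$ and $L$ meets the vertical line through $x$. Hence the set whose supremum and infimum are taken is nonempty. It is also bounded, and compact, since $d_x$ is continuous. The two outer inequalities will both come from the elementary identity
$$d((x,y),L)=\min_{i\in I} d((x,y),L_i),$$
which holds because $L=\cup_{i\in I}L_i$ and $I$ is finite. Combining this with the sign information of Corollary \ref{basic:01}, applied separately to each pair $(K,L_i)$ and to the pair $(K,L)$, should give the result.

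\emph{Upper bound.} Fix $x$ and let $y>\min_i G_i^+(x)$, say $y>G_j^+(x)$ for some $j$. By Corollary \ref{basic:01} for the pair $(K,L_j)$ we get
$$d((x,y),K)>d((x,y),L_j)\geq \min_i d((x,y),L_i)=d((x,y),L).$$
So $(x,y)$ is not an equidistant point of $K$ and $L$. Every equidistant point of $K$ and $L$ on the vertical line through $x$ therefore has last coordinate at most $\min_i G_i^+(x)$. This gives $G_{\min}^+(x)\leq \min_i G_i^+(x)$.

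\emph{Lower bound.} Let $y<\min_i G_i^-(x)$, so that $y<G_i^-(x)$ for every $i$. Corollary \ref{basic:01} for each pair $(K,L_i)$ gives $d((x,y),K)<d((x,y),L_i)$ for all $i\in I$. Taking the minimum over the finite set $I$ preserves the strict inequality:
$$d((x,y),K)<\min_i d((x,y),L_i)=d((x,y),L).$$
Thus no equidistant point of $K$ and $L$ lies strictly below $\min_i G_i^-(x)$, which gives $\min_i G_i^-(x)\leq G_{\min}^-(x)$.

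The only delicate point is checking that Corollary \ref{basic:01} is legitimately available for each pair $(K,L_i)$, since the standing hypotheses must hold for each of them. Here $f_i$ is continuous and positive, $K$ is closed, $K\cap L_i=\emptyset$, and $K$ contains a point with $y_*\leq 0<f_i$. Hence Lemma \ref{lem:ex} applies to each pair, so $G_i^\pm$ are finite and well defined. The asymmetry between the two bounds (a single index $j$ suffices for the upper bound, whereas all indices are needed for the lower bound) is exactly why finiteness of $I$ is used: it makes the minimum of the distances attained and keeps the inequality strict.
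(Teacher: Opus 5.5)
Your proposal is correct and matches the paper's proof: both outer bounds come from applying Corollary \ref{basic:01} to each pair $(K,L_i)$ and combining it with $d((x,y),L)=\min_{i\in I} d((x,y),L_i)$, using a single index for the upper bound and all indices for the lower bound. Your extra checks are points the paper leaves implicit: nonemptiness of the equidistant set along the vertical line (via Lemma \ref{lem:ex}), which gives the middle inequality, and the validity of the standing hypotheses for each pair.
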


\begin{proof} 
Let $x\in \mathbb{R}^n$ be an arbitrary point and suppose that 
$$y>\min \left\{G_i^+(x)\mid i\in I\right\}.$$
Then there exists at least one index $i\in I$ such that $y>G_i^+(x)$ and Corollary \ref{basic:01} implies that $d((x,y),L_i)<d((x,y),K)$. Therefore 
$$d((x,y),L)=d((x,y),\cup_{i\in I\ }L_i)\leq d((x,y),L_i)<d((x,y),K).$$
This means that the last coordinate of any equidistant point to the focal sets $K$ and $L$  must be less than or equal to the minimum value among $G_i^+(x)$ ($i\in I$). On the other hand, suppose that 
$$y<\min \left\{G_i^-(x)\mid i\in I\right\}.$$
Then $y<G_i^-(x)$ for all $i\in I$ and Corollary \ref{basic:01} implies that $d((x,y),K)<d((x,y), L_i)$ for all $i\in I$. Thus  
$$d((x,y),K)<\min \left\{d((x,y), L_i)\mid i\in I\right\}=d((x,y),\cup_{i\in I\ } L_i)=d((x,y), L)$$
and we have that the last coordinate of any equidistant point to the focal sets $K$ and $L$  must be greater than or equal to the minimum value among $G_i^-(x)$ ($i\in I$).
\end{proof}

\begin{corollary} If $G_i^-(x)=G_i^+(x)$ for any $i\in I$, then $G_{\min}^+(x)=G_{\min}^-(x)$. If the equidistant sets belonging to the focal sets $K$ and $L_i$ are equidistant functions for any $i\in I$, then so is the equidistant set belonging to the focal sets $K$ and $L$. 
\end{corollary}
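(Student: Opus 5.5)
The plan is to combine the sandwich inequality of Theorem \ref{basic:01c} with the fact that the minimum operator commutes with evaluation. Suppose $G_i^-(x)=G_i^+(x)$ for all $i\in I$, and write $G_i(x)$ for this common value. Then
$$\min \left\{G_i^-(x)\mid i\in I\right\}=\min \left\{G_i(x)\mid i\in I\right\}=\min \left\{G_i^+(x)\mid i\in I\right\}.$$
Theorem \ref{basic:01c} gives
$$\min \left\{G_i^-(x)\mid i\in I\right\}\leq G_{\min}^-(x)\leq G_{\min}^+(x)\leq \min \left\{G_i^+(x)\mid i\in I\right\}.$$
The two outer terms coincide, so all four quantities are equal. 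In particular $G_{\min}^+(x)=G_{\min}^-(x)=\min\{G_i(x)\mid i\in I\}$.

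For the second claim, first note that the hypotheses of Lemma \ref{lem:ex} hold for $K$ and $L$. The set $K$ contains a point $(x_*,y_*)$ with $y_*\leq 0<f_{\min}(x)$, and $K\cap L=\emptyset$. Hence every vertical line meets $\{K=L\}$, so the set of equidistant points above $x$ is nonempty. By Lemma \ref{lem:ex} this set is also bounded, so $G_{\min}^{\pm}(x)$ are finite real numbers.

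Next, the assumption that each equidistant set $\{K=L_i\}$ is the graph of a function means the equidistant point over $x$ is unique. Therefore $G_i^-(x)=G_i^+(x)=G_i(x)$ for every $x$ and every $i$. By the first part, $G_{\min}^-(x)=G_{\min}^+(x)$. Every equidistant point $(x,y)$ of $K$ and $L$ satisfies $G_{\min}^-(x)\leq y\leq G_{\min}^+(x)$, so it is unique. Thus $\{K=L\}$ is the graph of $G_{\min}=\min_i G_i$. This function is continuous by the earlier corollary on continuity of the equidistant function.

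I expect no real obstacle. The only point needing care is that $G^{\pm}_{\min}$ are well-defined and finite, and this is exactly what Lemma \ref{lem:ex} and the standing assumptions (in particular $K\cap L=\emptyset$) provide.
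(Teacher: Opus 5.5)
Your proposal is correct and follows the route the paper intends: the corollary comes directly from the sandwich inequality of Theorem~\ref{basic:01c}, because equal outer bounds force $G_{\min}^-(x)=G_{\min}^+(x)=\min_i G_i(x)$. Your extra check via Lemma~\ref{lem:ex}, that the equidistant set over each $x$ is nonempty and bounded (so $G_{\min}^{\pm}$ are finite and uniqueness genuinely yields a function), is a sound detail that the paper leaves implicit.
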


\subsection{A necessary and sufficient condition for the existence of equidistant functions}

\begin{theorem}
Let $L$ be the epigraph of a continuous, positive-valued function $f\colon \mathbb{R}^n\to\mathbb{R}$ and let $K\subset \mathbb{R}^{n+1}$ be a closed set and suppose that $K\cap L=\emptyset$. If there exist no point $x_0\in\mathbb{R}^n$ and real numbers $0<y_1<y_2$, such that $(x_0,y_1)\notin K$ and $(x_0,y_2)\in K$, then the vertical line contains at most one equidistant point for any $x\in \mathbb{R}^n$.
\end{theorem}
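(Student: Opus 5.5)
The plan is to argue by contradiction. Suppose that for some $x\in\mathbb{R}^n$ there are two equidistant points $p_1=(x,y_1)$ and $p_2=(x,y_2)$ with $y_1<y_2$, and put $\delta=y_2-y_1>0$. I will show that
$$d(p_1,K)\leq d(p_2,K)=d(p_2,L)<d(p_1,L),$$
which contradicts $d(p_1,K)=d(p_1,L)$.

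\textbf{The $L$-side, with strict inequality.} Equidistant points are not in $L$, so $y_1<f(x)$. Let $l_1=(a,c)$ be a nearest point of $L$ to $p_1$. Then $l_1$ lies on the graph of $f$ and $c=f(a)>y_1$, since otherwise the point $(a,y_1)$ would be at least as close to $p_1$ and not in the interior of $L$. Because $L$ is an epigraph, every point $l_1+(0,t)$ with $t\geq 0$ lies in $L$. Consider
$$\varphi(t)=|p_2-l_1-(0,t)|,\qquad t\in[0,\delta].$$
We have $\varphi(\delta)=|p_1-l_1|=d(p_1,L)$. Its one-sided derivative at $t=\delta$ is $(c+\delta-y_2)/\varphi(\delta)=(c-y_1)/\varphi(\delta)>0$. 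Hence $\varphi(t)<\varphi(\delta)$ for $t$ slightly less than $\delta$, and so $d(p_2,L)<d(p_1,L)$.

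\textbf{The $K$-side, needing only $\leq$.} This is where the hypothesis on $K$ enters. The hypothesis says exactly that whenever $(x_0,y)\in K$ with $y>0$, every $(x_0,t)$ with $0<t\leq y$ also lies in $K$. Since $K$ is closed, $(x_0,0)\in K$ as well. Let $k_2=(a',b)$ be a nearest point of $K$ to $p_2$. There are three cases.

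\emph{Case $b>0$.} Let $b'=\max\{b-\delta,0\}$. Then $(a',b')\in K$. Since $|(y_2-\delta)-b'|\leq|y_2-b|$ in either sub-case (the translated point, or the clamped point at height $0$ when $b\leq\delta$), we get $|p_1-(a',b')|\leq|p_2-k_2|$. Therefore $d(p_1,K)\leq d(p_2,K)$.

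\emph{Case $b\leq 0$ and $y_1\geq b$.} Here $|y_1-b|\leq|y_2-b|$, so $d(p_1,K)\leq|p_1-k_2|\leq|p_2-k_2|$ directly.

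\emph{Case $y_1<b\leq 0$.} This is the main obstacle: moving down from $p_2$ no longer obviously approaches $K$. To handle it I would interchange roles and show directly that $p_1$ cannot be equidistant. Both $p_1$ and $p_2$ lie below $k_2$ horizontally offset by $a'-x$, and $y_1<b\leq 0<f$. I would first try the following comparison. Since $d(p_2,K)=|p_2-k_2|$, the closed ball around $p_2$ of that radius meets no point of $L$ in its interior. Then I would show that the ball around $p_1$ through $k_2$ is contained in the union of that ball and the region $\{t<0\}$, which is disjoint from $L$ because $f>0$. This gives $d(p_1,L)>|p_1-k_2|\geq d(p_1,K)$, the desired contradiction.

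If this containment fails, the fallback is Lemma~\ref{lem:ex} together with Corollary~\ref{basic:01}. Points sufficiently low on the vertical line are strictly closer to $K$, so the situation $y_1<b\leq 0$ can be pushed into a sign argument for $d_x$ on the interval $[y_1,y_2]$.

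Combining the $L$-side and the $K$-side in all cases gives $d(p_1,K)<d(p_1,L)$, contradicting the assumption that $p_1$ is equidistant.
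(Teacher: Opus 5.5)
Your argument works when $y_1\ge 0$: then Case~3 cannot occur, and the clamped point $(a',0)$ suffices because $y_2-b>y_2-\delta=y_1\ge 0$. The problem is $y_1<0$, where the $K$-side has a genuine gap.

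\textbf{The clamped sub-case fails.} When $0<b<\delta$ and $b'=0$, you need $|y_1|\le|y_2-b|$. This fails, for instance, when $k_2$ lies horizontally from $p_2$, so that $b=y_2$. More care does not fix this. The inequality $d(p_1,K)\le d(p_2,K)$ cannot be obtained from the hypothesis on $K$ and the shadow of $k_2$ at all: take $K=\{a'\}\times[0,1]$, $p_2=(x,1)$, $p_1=(x,-10)$. So for $y_1<0$ you have to compare $d(p_1,K)$ with $r_1=d(p_1,L)$ directly, using the geometry of $L$.

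\textbf{Case~3 is only a plan.} The containment you propose is in fact true. The power functions of the two balls through $k_2$ differ by $2\delta(t-b)$, so the part of the ball about $p_1$ through $k_2$ lying above height $b\le 0$ is contained in $B(p_2,r_2)$. But you do not prove it. The fallback through Lemma~\ref{lem:ex} and Corollary~\ref{basic:01} says nothing about the sign of $d_x$ between two equidistant points.

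\textbf{A smaller point on the $L$-side.} The claim $c>y_1$ is not justified for a merely continuous $f$, since $c=y_1$ can happen. Strictness still holds, because $l_1+(0,\delta)$ is an interior point of $L$ at distance $r_1$ from $p_2$.

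\textbf{The missing ingredient.} Your overall idea is the paper's: use the downward-closed shadow of $k_2$ to put a point of $K$ inside $B(p_1,r_1)$. What you lack is the paper's comparison of the balls $B(p_i,r_i)$, where $r_i=d(p_i,K)=d(p_i,L)$.
\begin{itemize}
\item The point $l_1\in\partial B(p_1,r_1)$ lies outside the open ball $B(p_2,r_2)$, and $k_2\in\partial B(p_2,r_2)$ lies outside $B(p_1,r_1)$. Subtracting squared distances gives $0<c\le\tau\le b$, with $\tau=(r_1^2-r_2^2+y_2^2-y_1^2)/(2\delta)$. Hence $b>0$ always, and your Cases~2 and~3 are vacuous.
\item Next, $|x-a'|\le|x-a|$. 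If $c<y_2$, then $(a,y_2)$ is an interior point of $L$, hence outside the closed ball $\overline{B}(p_2,r_2)$, so $|x-a|>r_2\ge|x-a'|$. If $c\ge y_2$, then $0\le c-y_2\le b-y_2$ and $|x-a|^2+(c-y_2)^2\ge r_2^2=|x-a'|^2+(b-y_2)^2$.
\item With these facts, for $y_1<0$ the point $(a',0)\in K$ satisfies $|p_1-(a',0)|^2\le|x-a|^2+y_1^2<|x-a|^2+(c-y_1)^2=r_1^2$. So $d(p_1,K)<d(p_1,L)$, which is the contradiction you want.
\end{itemize}
The paper packages the same estimate differently. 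The vertical line through $a'$ enters the open ball $B(p_1,r_1)$ at a positive height below $b$. This gives a point outside $K$ directly beneath $(a',b)\in K$, violating the hypothesis.
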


\begin{proof}
It's enough to prove that if there exists a point $x_1\in\mathbb{R}^n$ and real numbers $y_3<y_4$ such that both $T_1=(x_1,y_3)$ and $T_2=(x_1,y_4)$ are elements of the equidistant set $\{K=L\}$, then there exist $x_0\in\mathbb{R}^n$ and real numbers $0<y_1<y_2$ such that $P_1=(x_0,y_1)\notin K$ and $P_2=(x_0,y_2)\in K$. For the sake of clarity, we are going to use the partial ordering of the elements in $\mathbb{R}^{n+1}$ with respect to the last coordinate.

\begin{figure}
\includegraphics[height=6.5cm]{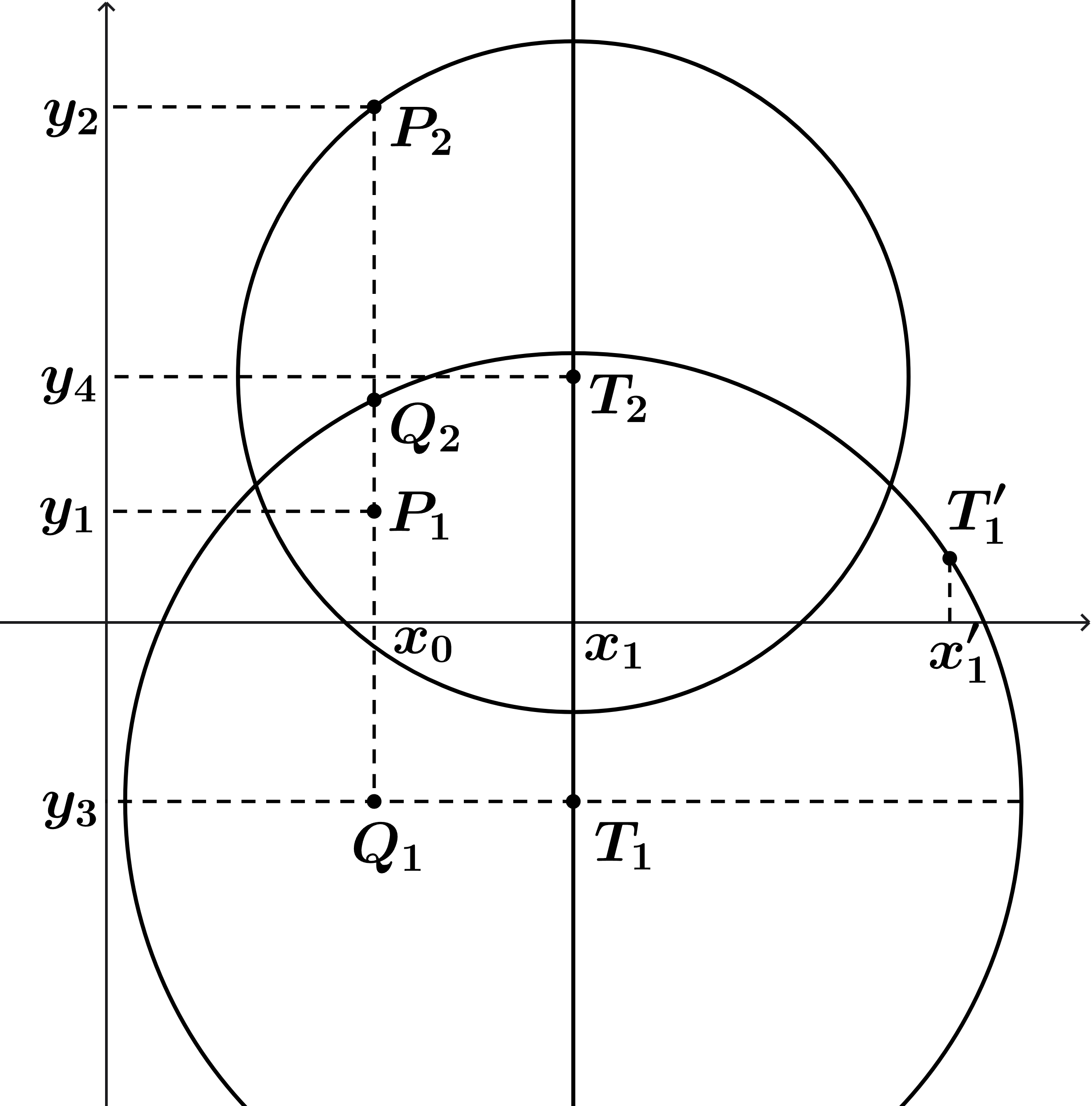}
\caption{\label{forwardfigure}}
\end{figure}

Let $r_1$ denote the common distance of $T_1$ from $K$ and $L$, and let $r_2$ denote the common distance of $T_2$ from $K$ and $L$ (see Figure 1). Since $K\cap L=\emptyset$, we have $r_1>0$ and $r_2>0$. Consider the ball $S_1$ centered at $T_1$ with radius $r_1$ and the ball $S_2$ centered at $T_2$ with radius $r_2$. The epigraph of $f$ is closed, therefore it must intersect the boundary of the ball $S_1$ in at least one point $T_1'=(x_1',y_1')$, but the epigraph can't intersect the interior of either $S_1$ or $S_2$. In particular, $T_1'\geq T_1$, that is $T_1'$ is on the upper hemisphere of $S_1$. Furthermore, $y_1'>0$, because $f$ is a positive function. In a similar way, the set $K$ must intersect the boundary of the ball $S_2$ in at least one point $P_2=(x_0,y_2)$, but $K$ can't intersect the interior of either $S_1$ or $S_2$. Therefore
$$d^2(T_1', T_1)=\left|x_1'-x_1\right|^2+\left(y_1'-y_3\right)^2=r_1^2$$
because $T_1'$ is on the boundary of $S_1$, but
$$d^2(T_1',T_2)=\left|x_1'-x_1\right|^2+\left(y_1'-y_4\right)^2\geq r_2^2$$
because $T_1'$ is not in the interior of $S_2$. Thus we have that
$$r_2^2-r_1^2\leq -2y_1'(y_4-y_3)+y_4^2-y_3^2 \ \ \Rightarrow\ \ y_1' \leq \frac{r_1^2-r_2^2+y_4^2-y_3^2 }{2(y_4-y_3)}$$
because $T_1<T_2$. In a similar way,   
$$d^2(P_2, T_2)=\left|x_0-x_1\right|^2+\left(y_2-y_4\right)^2=r_2^2,$$
$$d^2(P_2,T_1)=\left|x_0-x_1\right|^2+\left(y_2-y_3\right)^2\geq r_1^2$$
and we have that
\begin{equation}
\label{thm03:eq1}
r_1^2-r_2^2\leq -2y_2(y_3-y_4)+y_3^2-y_4^2 \ \ \Rightarrow\ \ y_2 \geq \frac{r_1^2-r_2^2+y_4^2-y_3^2 }{2(y_4-y_3)}\geq y_1'>0,
\end{equation}
that is $P_2=(x_0,y_2)\in K$, where $y_2>0$. To construct a point $P_1=(x_0,y_1)\notin K$ such that $0<y_1<y_2$ we claim that $r_2<r_1$ and 
\begin{equation}
\label{thm03:eq2}
\left|x_0-x_1\right|^2\leq \left|x_1'-x_1\right|^2.
\end{equation}
If $T_2\leq T_1'$ then inequality \eqref{thm03:eq2} is automatic because $T_1'$ is not in the interior of $S_2$, but $P_2$ is on its boundary:
$$\left|x_1'-x_1\right|^2+(y_1'-y_4)^2\geq \left|x_0-x_1\right|^2+(y_2-y_4)^2=r_2^2.$$
Using \eqref{thm03:eq1}, we have that $T_2 \leq T_1'\leq P_2$ and, consequently,
$$(y_1'-y_4)^2\leq (y_2-y_4)^2\ \ \Rightarrow\ \ \left|x_1'-x_1\right|^2\geq \left|x_0-x_1\right|^2.$$
In particular, $T_1 < T_2 \leq T_1'$ implies that $r_2\leq d(T_1', T_2)<d(T_1',T_1)=r_1$ because $T_1'$ is not in the interior of $S_2$ and $T_1<T_2$ along the same vertical line. Discussing the opposite case, suppose that $T_1'< T_2$. Since $S_2$ and the interior of the epigraph of the function $f$ must be disjoint, the vertical ray through $T_1'$ (strictly) above $y_1'$ can't intersect $S_2$, that is 
$$\left|x_0-x_1\right|^2 \leq \left|x_0-x_1\right|^2 +(y_2-y_4)^2=d^2(P_2, T_2)=r_2^2 < \left|x_1'-x_1\right|^2 \leq r_1^2$$
and the proof of inequality \eqref{thm03:eq2} together with inequality $r_2<r_1$ is complete. Finally, let us consider the upper hemisphere of $S_1$. It is the graph of the function
	\[g\colon D_1\to\mathbb{R},\quad g(x)=y_3+\sqrt{r_1^2-\left|x_1-x\right|^2},
\]where $D_1\subset\mathbb{R}^n$ is the $n$-dimensional ball centered at $x_1$ with radius $r_1$. The function $g$ is continuous and $g(x_1')=y_1'>0$. In the sense of inequality \eqref{thm03:eq2},
	\[g(x_0)=y_3+\sqrt{r_1^2-\left|x_1-x_0\right|^2}\geq y_3+\sqrt{r_1^2-\left|x_1-x_1'\right|^2}=g(x_1')=y_1'>0.
\]This shows that $g(x_0)\geq y_1'\geq y_3$, however $g(x_0)=y_3$ is not possible, because it would imply that $r_2\geq r_1$, but that's false. Then any point of the segment connecting $Q_1=(x_0,y_3)$ and $Q_2=(x_0,g(x_0))$ is an interior point of $S_1$ including the endpoint $Q_1$, but excluding $Q_2$. If $y_3>0$, let's choose $y_1=y_3>0$, otherwise let $y_1=g(x_0)/2 >0$. Then $P_1=(x_0,y_1)$ is clearly an interior point of $S_1$ such that $y_1>0$. The set $K$ can't intersect the interior of $S_1$, thus $P_1\notin K$ and $y_2\geq g(x_0)>y_1$ because $P_2$ is not in the interior of $S_1$. To sum up, there exist $x_0\in\mathbb{R}^n$ and real numbers $0<y_1<y_2$ such that $P_1=(x_0,y_1)\notin K$ and $P_2=(x_0,y_2)\in K$.
\end{proof}

\begin{theorem}
Let $K\subset \mathbb{R}^{n+1}$ be a convex compact set. If there exist $x_1\in\mathbb{R}^n$ and real numbers $0<y_1<y_2$ such that $(x_1,y_1)\notin K$ and $(x_1,y_2)\in K$, then there exist a positive-valued, convex and, consequently, continuous function $f\colon \mathbb{R}^n\to \mathbb{R}$ and a point $x_2\in\mathbb{R}^n$ such that the epigraph $L$ of $f$ is disjoint to $K$, and the vertical line through $x_2$ intersects the equidistant set $\{K=L\}$ in at least two distinct points.
\end{theorem}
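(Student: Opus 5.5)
The plan is to choose $f$ of the form $f(x)=\max\{c,A(x)\}$, where $A$ is affine and $c>0$. Such an $f$ is positive and convex, and its epigraph is the intersection of two closed upper half-spaces. I would then find a vertical line on which the function $d_{x_2}$ of \eqref{difference:01} changes sign twice. Concretely, I look for $x_2$ and heights $t_1<t_2<t_3$ with
$$d_{x_2}(t_1)>0,\qquad d_{x_2}(t_2)<0,\qquad d_{x_2}(t_3)>0 .$$
Continuity of $d_{x_2}$ then gives two distinct zeros, one in $(t_1,t_2)$ and one in $(t_2,t_3)$. The natural choices are: $t_2$ a height at which the vertical line meets $K$, so that $d_{x_2}(t_2)=-d((x_2,t_2),L)<0$ once $L\cap K=\emptyset$; $t_3=f(x_2)$, which automatically gives $d_{x_2}(t_3)>0$ and requires $f(x_2)>\max\{y \mid (x_2,y)\in K\}$; and $t_1>0$ a point of an empty pocket under $K$.

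The key steps, in order, are as follows.

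First, use the hypothesis. Since $K$ is convex, the vertical line through $x_1$ meets $K$ in a segment $\{x_1\}\times[a,b]$ with $a>y_1>0$. Let $P=(x_1,y_1)$ and $\rho=d(P,K)>0$. As in the proof of Lemma \ref{lem:ex}, the open ball $B(P,\rho)$ does not meet $K$.

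Second, choose a point $Q=(q,c)$ with $c>0$, $|Q-P|<\rho$, and $Q$ outside the closed convex "shadow"
$$K^{\downarrow}=\{(x,y-t)\mid (x,y)\in K,\ t\geq 0\}.$$

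Third, separate $Q$ strictly from $K^{\downarrow}$ by a hyperplane. Because $K^{\downarrow}$ is invariant under downward translations, the normal can be taken with nonnegative vertical component. After a small perturbation, using compactness of $K$, the hyperplane becomes non-vertical, i.e. it is the graph of an affine function $A$ with $K$ strictly below the graph of $A$ and $A(q)\leq c$.

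Fourth, check the requirements for $f=\max\{c,A\}$. We have $Q\in L$, and $L\cap K=\emptyset$ because $K$ lies strictly below the graph of $A\leq f$. The inequality $f(x_1)\geq A(x_1)>b$ holds automatically. Finally $d(P,L)\leq |P-Q|<\rho=d(P,K)$, so $d_{x_1}(y_1)>0$. With $x_2=x_1$, $t_1=y_1$, $t_2=a$ and $t_3=f(x_1)$, the sign pattern above holds.

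The main obstacle is the second step: a point $Q\notin K^{\downarrow}$ with $|Q-P|<\rho$ and $c>0$ need not exist. For instance, $K$ may be a wide plate lying just above $P$, so that the whole ball $B(P,\rho)$ sits inside the shadow. The remedy I would try first is to move the vertical line. I would slide the base point horizontally from $x_1$ toward the boundary of the projection of $K$, keeping track of the convex lower boundary function $\beta(x)=\min\{y \mid (x,y)\in K\}$. The aim is a point $x_2$ and a height $0<t_1<\beta(x_2)$ such that $(x_2,t_1)\notin K$ and the distance from $(x_2,t_1)$ to the complement of $K^{\downarrow}$ is smaller than $d((x_2,t_1),K)$. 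Near the edge of the projection the former distance tends to $0$, while the latter should stay bounded below as long as $\beta$ stays away from $t_1$. The delicate case is when $\beta$ decreases to or below $t_1$ before the edge is reached. There I would use the convexity of $\beta$ and the positivity of the gap $\beta(x_1)-y_1>0$ to lower $t_1$ along the way, keeping $t_1>0$, so that the pocket under $K$ persists up to the exit point of the shadow. Once such $x_2,t_1$ are found, the third and fourth steps go through verbatim with $P$ replaced by $(x_2,t_1)$.
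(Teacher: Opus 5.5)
Your Steps 1, 3 and 4 are sound. Suppose you have a point $P'=(x_2,t_1)$ with $t_1>0$, such that the vertical line through $x_2$ meets $K$ only above $t_1$, and a point $Q=(q,c)$ with $c>0$, $Q\notin K^{\downarrow}$ and $|Q-P'|<d(P',K)$. Then the roof function $f=\max\{c,A\}$ and the sign pattern $+,-,+$ of $d_{x_2}$ give two distinct equidistant points.

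The gap is Step 2, which you flag yourself, and your remedy does not close it. You slide toward the edge of the projection in an unspecified direction. That can fail irreparably when $K$ dips to nonpositive heights in that direction. Take $n=1$, $K$ the convex hull of $(-10,-1)$, $(10,1)$, $(10,3)$, $(-10,3)$, $x_1=5$, $y_1=1/4$. Here $\beta(x)=x/10$, Step 2 fails at $x_1$, and sliding to the left makes $\beta\le 0$ before the edge $x=-10$ is reached. There is then no pocket with $t_1>0$ to carry to the exit point, and ``lowering $t_1$ while keeping $t_1>0$'' is impossible. Convexity of $\beta$ and the gap $\beta(x_1)-y_1>0$ do not prevent this; what matters is the choice of direction.

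The missing idea, which is exactly how the paper begins, is to slide away from a lowest point of $K$. Let $(x_*,y_*)\in K$ have minimal last coordinate, so $x_*$ is a global minimizer of the convex function $\beta$ on the projection $\pi(K)$ of $K$ to $\mathbb{R}^n$. Put $w=x_1-x_*$, or any nonzero $w$ if $x_1=x_*$. Then $s\mapsto\beta(x_*+sw)$ is convex with its minimum at $s=0$, hence nondecreasing for $s\ge 0$. So $\beta\ge\beta(x_1)>y_1$ on the whole segment from $x_1$ to the exit point $x_e=x_*+s_{\max}w$ of $\pi(K)$.

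No sliding estimate is then needed. Take $x_2=x_e$ and $t_1=y_1$; note $d((x_e,y_1),K)>0$ since $\beta(x_e)>y_1$. Take $Q=(x_e+\eta w,y_1)$ with $0<\eta|w|<d((x_e,y_1),K)$. This $Q$ lies outside $K^{\downarrow}$ because $x_e+\eta w\notin\pi(K)$.

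With this fix your Steps 3--4 complete the proof, and the ending genuinely differs from the paper's. The paper moves to the vertical line through $x_e+v$, outside $K$, using a vertical supporting hyperplane with normal $(v,0)$. It shows $d(T_2,K)<d(T_1,K)$ and then builds a steep cone tangent to both balls, so that $T_1,T_2$ are exactly equidistant. Your roof function plus the intermediate value theorem on a line through $K$ avoids all the tangency and steepness estimates. The price is that it locates the two equidistant points only implicitly, one below and one above $K$.
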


\begin{proof}
Let $P_0=(x_*,y_*)\in K$ be a point of $K$, where $y_*$ is minimal. It's clear that $y_*\leq y_2$ and the point $P_1=(x_1,y_1)\notin K$ can't be between the points $P_0\in K$ and $P_2=(x_1,y_2)\in K$, because $K$ is convex. Thus $0<y_1< y_*$ or $x_*\neq x_1$. If $x_*\neq x_1$, then let $w=x_1-x_*\in\mathbb{R}^n$, otherwise let $w\in\mathbb{R}^n$ be an arbitrary nonzero vector. Let $H$ denote the 2-dimensional subspace spanned by the vectors $(w,0)$ and $e_{n+1}$, where $e_{n+1}$ is the last element of the standard basis of $\mathbb{R}^{n+1}$. Let $Q_2=(x_*+s\cdot w,t)$ be a point of $K$ in the plane $x_*+H$, where $s$ is maximal. If $y_1< y_*$, then the point $Q_1=(x_*+s\cdot w,y_1)$ does not belong to $K$ and $t>y_1$ because $y_*\leq y$ for any $(x,y)\in K$. If $y_*\leq y_1$, then $x_*\neq x_1$ and $P_1\notin K$ is a point in the triangle $P_0P_2Q_1$, where $P_0\in K$, $P_2\in K$, thus $Q_1$ can't be an element of $K$. Furthermore, $t>y_1$, because otherwise $P_1\notin K$ was a point in the triangle $P_0P_2Q_2$, which would contradict to the convexity of $K$. Thus, in all possible cases, $Q_2\in K$, $Q_1\notin K$ and $0<y_1<t$ (see Figure \ref{conversefigure} (A)).

The choice of the point $Q_2$ implies that no point of the horizontal, open ray $R=\left\{Q_2+(h\cdot w,0)\,|\,h> 0 \right\}$ is an element of $K$. Moreover, no vertical line through a point of the open ray $R$ can intersect $K$. Let $H_1$ denote the hyperplane orthogonal to $e_{n+1}$ through $Q_2$ and let $K'$ denote the orthogonal projection of $K$ to $H_1$. Then no point of $K$ is projected onto the open ray $R$. Thus the points of $R$ are exterior points of the projection $K'$ and hence $Q_2$ is a boundary point of $K'$. The orthogonal projection $K'$ is also a convex set, thus there exist a hyperplane $H_2$ through $Q_2$ with normal vector $(v,0)\in\mathbb{R}^n$ such that every point of $K'$ is contained in the same closed half space bounded by $H_2$. Then $H_2$ is a supporting hyperplane of $K$ too at $Q_2$, and it contains all points of the vertical line through $Q_2$. We can assume that the normal vector $(v,0)$ points outward.

\begin{figure}
\def\tabularxcolumn#1{m{#1}}
\begin{tabularx}{\linewidth}{@{}cXX@{}}

\begin{tabular}{cc}
\subfloat[]{\includegraphics[height=6cm]{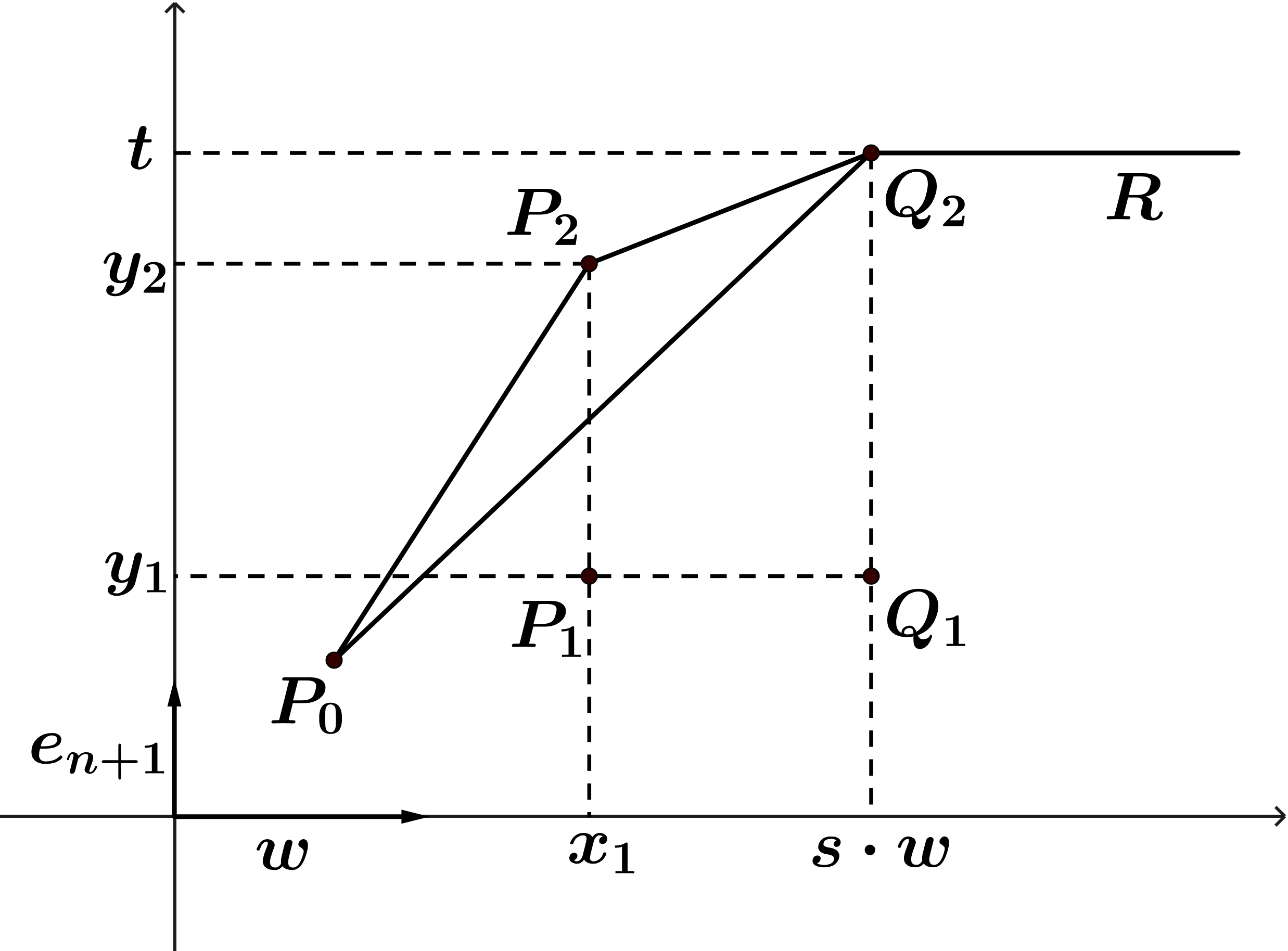}} 
   & \subfloat[]{\includegraphics[height=7cm]{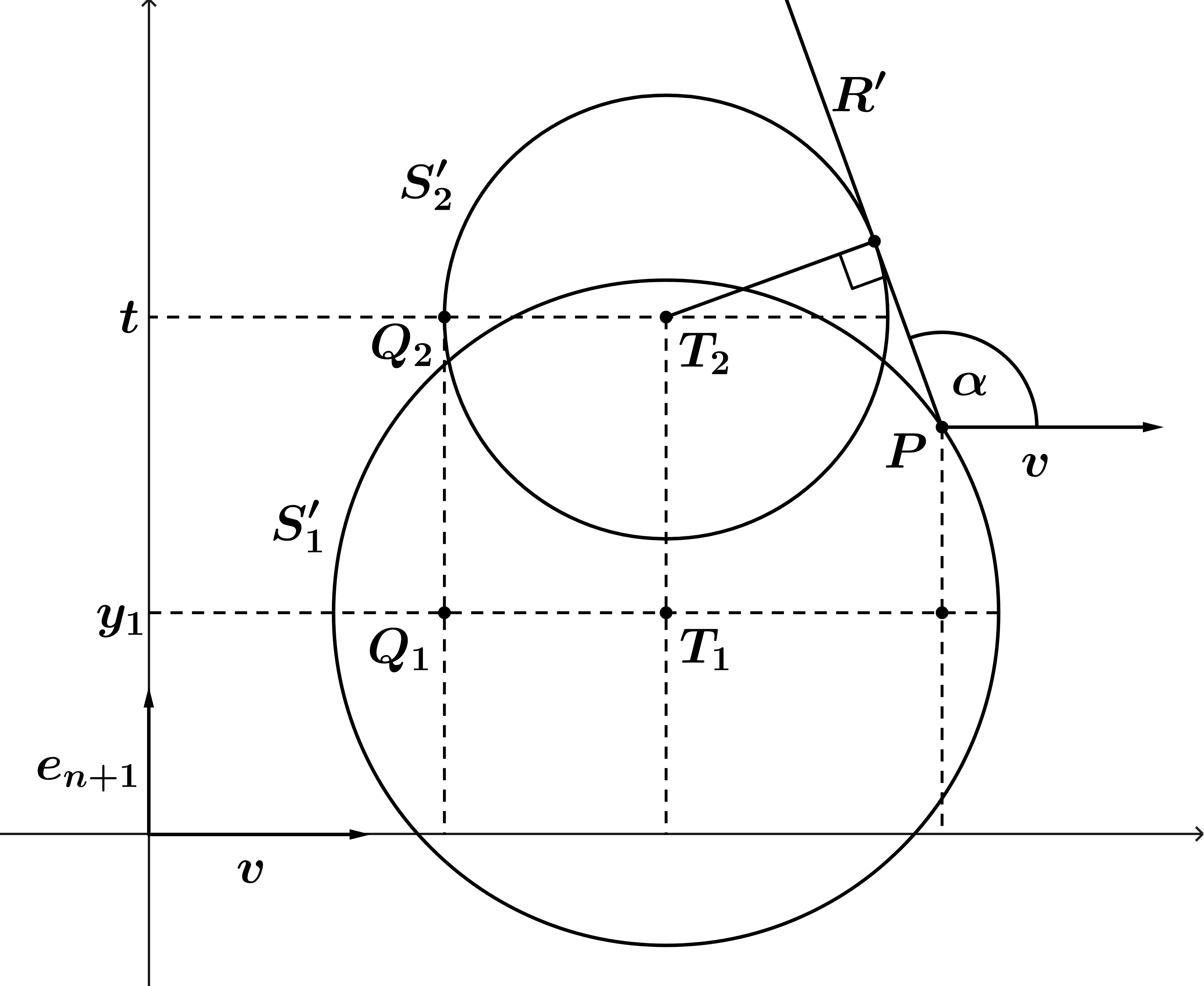}}
\end{tabular}

\end{tabularx}

\caption{\label{conversefigure}}
\end{figure}

Now let $T_1=Q_1+(v,0)$ and $T_2=Q_2+(v,0)$. The distance of $T_2$ from $H_2$ is $r_2=\left|v\right|>0$ and the sphere $S_2$ centered at $T_2$ with radius equal to $r_2$ intersects $H_2$ at the point $Q_2$. This shows that the distance of $T_2$ from $K$ is also equal to $r_2$. Since $T_1$ and $T_2$ are two points on the same vertical line, the distance of $T_1$ from $H_2$ is also $r_2$. Let $r_1$ denote the distance of $T_1$ from $K$ and let $S_1$ be the sphere centered at $T_1$ with radius equal to $r_1$. Then $S_1$ contains exactly one point of $K$ on its boundary. It is clear that $r_1<r_2$ is not possible, because otherwise $S_1$ wouldn't intersect $H_2$. Also, $r_1=r_2$ is not possible, because in such a case $S_1$ would intersect $H_2$ only in the point $Q_1$ which is not an element of $K$. Therefore $r_2<r_1$ and we note that $Q_2$ can't be inside the sphere $S_1$, thus no point of the upper hemisphere of $S_2$ is inside the sphere $S_1$.

Let $H_3$ be the 2-dimensional plane through $Q_1$ parallel to the subspace spanned by the vectors $(v,0)$ and $e_{n+1}$. The plane $H_3$ contains all the points $Q_1$, $Q_2$, $T_1$ and $T_2$. The intersection of $H_3$ with the spheres $S_1$ and $S_2$ are circles $S_1'$ and $S_2'$ centered at $T_1$ and $T_2$ with radii $r_1$ and $r_2$, respectively (see Figure 2 (B)). Given $0<\varepsilon <r_1/r_2-1$, let $P$ be the upper intersection point of $S_1'$ with the vertical line through the point $T_1+(1+\varepsilon)\cdot (v,0)$, and consider the ray $R'$ in the plane $H_3$ whose endpoint is $P$ and which intersects the circle $S_2'$ in exactly one point. Such a point $P$ and the ray $R'$ exist because $1<1+\varepsilon<r_1/r_2$ and no point of the upper hemisphere of $S_2$ is inside the sphere $S_1$. It's clear that if $\varepsilon$ tends to $0$, then the tangent of the angle $\alpha$ enclosed by the ray $R'$ and the horizontal vector $(v,0)$ tends to $-\infty$. Thus if $l_y$ denotes the length of the orthogonal projection of $K$ to the vertical axis, then it is possible to choose $\varepsilon >0$ small enough to make $|\tan \alpha|$ larger than $l_y/r_2$ and that the ray $R'$ has no further intersection point with $S_1$ besides $P$.

Now we define the function $f\colon \mathbb{R}^n\to\mathbb{R}$, $f(x)=y_P+|x-x_P|\cdot|\tan\alpha|$, where $y_P>y_1>0$ denotes the last coordinate of $P=(x_P,y_P)$. Then $f$ is a convex function with positive values and the graph of $f$ is above the convex set $K$, because $|x-x_P|> r_2$ for any $(x,y)\in K$ and the last coordinate of any point of $K$ is less than $l_y$. Furthermore, the graph of $f$ intersects each of the spheres $S_1$ and $S_2$, but no point of the graph is inside any of these spheres. Thus the distance of $T_1$ from the epigraph of $f$ is $r_1$ and the distance of $T_2$ from the epigraph of $f$ is $r_2$. This shows that both the points $T_1=(x_*+s\cdot w+v,y_1)$ and $T_2=(x_*+s\cdot w+v,t)$ are elements of the equidistant set $\{K=L\}$, where $L$ is the epigraph of $f$. In other words, the vertical line through $x_2=x_*+s\cdot w+v$ intersects the equidistant set $\{K=L\}$ in at least two distinct points $T_1$ and $T_2$.
\end{proof}

\begin{corollary}
\label{sum:01}
Let $K\subset \mathbb{R}^{n+1}$ be a convex compact set which has a point $(x_*,y_*)\in K$ such that $y_*\leq 0$. Then $G^-(x)=G^+(x)$ for all $x\in\mathbb{R}^n$ with the epigraph $L$ of \underline{any} positive-valued, continuous/convex function $f\colon \mathbb{R}^n\to\mathbb{R}$, such that $K\cap L=\emptyset$, if and only if there exist no point $x_0\in\mathbb{R}^n$ and real numbers $0<y_1<y_2$, such that $(x_0,y_1)\notin K$ and $(x_0,y_2)\in K$.
\end{corollary}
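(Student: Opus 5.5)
The plan is to read the corollary as the combination of Theorem 3 (sufficiency) and Theorem 4 (necessity), with Lemma \ref{lem:ex} guaranteeing that the relevant equidistant points actually exist. The standing hypothesis that $K$ has a point $(x_*,y_*)$ with $y_*\leq 0$ plays exactly this role: since $f$ is positive-valued, $y_*\leq 0<f(x)$ for all $x$, so Lemma \ref{lem:ex} applies and every vertical line contains at least one equidistant point. In particular, $G^+(x)$ and $G^-(x)$ are finite real numbers, bounded between the lower bound of the lemma and $f(x)$, and $G^-(x)\leq G^+(x)$.

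\emph{Sufficiency.} Assume there are no $x_0$ and $0<y_1<y_2$ with $(x_0,y_1)\notin K$ and $(x_0,y_2)\in K$. Let $f$ be any positive-valued continuous function; in particular this covers every convex one, since convex functions on $\mathbb{R}^n$ are continuous. Suppose its epigraph $L$ is disjoint from $K$. Then Theorem 3 applies verbatim: $K$ is closed, being compact, and $L$ is the epigraph of a continuous positive function. So every vertical line contains at most one equidistant point, and together with existence it contains exactly one. Since the equidistant set is closed, the supremum and infimum in the definition of $G^\pm$ are attained. Both therefore equal this unique point, giving $G^-(x)=G^+(x)$.

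\emph{Necessity.} Argue by contraposition. If such $x_0$ and $0<y_1<y_2$ exist, Theorem 4 (which uses the convexity and compactness of $K$) produces a positive-valued convex, hence continuous, function $f$ whose epigraph is disjoint from $K$. It also produces a point $x_2$ whose vertical line meets $\{K=L\}$ in two distinct points $(x_2,t_1)$ and $(x_2,t_2)$ with $t_1<t_2$. Then $G^-(x_2)\leq t_1<t_2\leq G^+(x_2)$, so the property in the statement fails for this $f$. Because the witness $f$ is convex, it violates the property both in the ``continuous'' reading and in the ``convex'' reading of the statement. This is why the corollary can be stated with ``continuous/convex'' simultaneously: sufficiency is proved for the larger class, and the counterexample lies in the smaller class.

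The argument is essentially bookkeeping. The only points that need care are, first, making sure the hypotheses of Theorem 3 are met, namely closedness of $K$ and disjointness from $L$, and, second, justifying that $G^\pm$ are attained. The second follows from the continuity of $d((x,y),K)-d((x,y),L)$ in $y$ together with the bounds of Lemma \ref{lem:ex}, which make the set of equidistant $y$'s on each vertical line compact and nonempty. I expect no genuine obstacle beyond correctly matching the ``for any $f$'' quantifier with the two theorems.
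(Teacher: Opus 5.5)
Your proposal is correct and follows the paper's route: the paper gives no separate proof of this corollary and obtains it by combining Theorem 3 (sufficiency, with Lemma \ref{lem:ex} supplying existence through the point $(x_*,y_*)$ with $y_*\leq 0$) and Theorem 4 (necessity, via a convex witness $f$ that covers both readings of ``continuous/convex''). The attainment argument for $G^\pm$ is superfluous, since once the equidistant set on each vertical line is a singleton, its supremum and infimum both equal that single point.
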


\subsection{Examples: focal sets with more than one equidistant points along the vertical lines}

Consider the points $P_1=\left(x_0,-\frac{1}{2}\right)$, $P_2=\left(x_0,\frac{1}{2}\right)$, $Q_1=(0,y_0)$ and $Q_2=(2x_0,y_0)$ in $\mathbb{R}^2$, where
	\[x_0=\frac{\sqrt{3}}{\sqrt{3}-1},\quad y_0=\frac{1}{2}\cdot \frac{\sqrt{3}+1}{\sqrt{3}-1}.
\]Let $K$ be the disc centered at $Q_1$ with radius $R=2$, and let $S$ be the circle centered at $Q_2$ with the same radius $R=2$. Here $x_0>R$, thus the vertical line through $x_0$ doesn't intersect any of the disc $K$ or the circle $S$, and it separates them into two different half planes. Furthermore, $\frac{1}{2}<y_0<R$, hence $K$ has a point, whose second coordinate is negative (see Figure \ref{cantorfigure}). Now we define the function $g\colon \left[-\frac{1}{2},\frac{1}{2}\right]\to \mathbb{R}^2$ such that $g(t)$ is the intersection point of $S$ with the line segment connecting $Q_2$ to the point $(x_0,t)$. Then
	\[g(t)=\left(2x_0-R\cos\left(\arctan\left(\frac{y_0-t}{x_0}\right)\right),y_0-R\sin\left(\arctan\left(\frac{y_0-t}{x_0}\right)\right)\right).
\]Since
	\[\frac{y_0-\left(-\frac{1}{2}\right)}{x_0}=\frac{\frac{1}{2}\cdot \frac{\sqrt{3}+1}{\sqrt{3}-1}+\frac{1}{2}}{\frac{\sqrt{3}}{\sqrt{3}-1}}=\frac{\frac{1}{2}\cdot (\sqrt{3}+1)+\frac{1}{2}(\sqrt{3}-1)}{\sqrt{3}}=1
\]and
	\[\frac{y_0-\frac{1}{2}}{x_0}=\frac{\frac{1}{2}\cdot \frac{\sqrt{3}+1}{\sqrt{3}-1}-\frac{1}{2}}{\frac{\sqrt{3}}{\sqrt{3}-1}}=\frac{\frac{1}{2}\cdot (\sqrt{3}+1)-\frac{1}{2}(\sqrt{3}-1)}{\sqrt{3}}=\frac{1}{\sqrt{3}},
\]we see that
	\[P_1'=g\left(-\frac{1}{2}\right)=\left(2x_0-R\cos\left(\frac{\pi}{4}\right),y_0-R\sin\left(\frac{\pi}{4}\right)\right)=\left(2x_0-\sqrt{2},y_0-\sqrt{2}\right)
\]and
	\[P_2'=g\left(\frac{1}{2}\right)=\left(2x_0-R\cos\left(\frac{\pi}{6}\right),y_0-R\sin\left(\frac{\pi}{6}\right)\right)=\left(2x_0-\sqrt{3},y_0-1\right).
\]Then $P_1'$ and $P_2'$ are above the $x$-axis. The function $g$ is continuous and the second coordinate of $g(t)$ is strictly increasing, thus $g$ is a one-to-one correspondence between the elements of $\left[-\frac{1}{2},\frac{1}{2}\right]$ and the points of the shorter circular arc of $S$ between $P_1'$ and $P_2'$, which we denote by $S'$. Let $l$ be the line through $P_2'$ perpendicular to the line segment $\overline{P_2'Q_2}$. The equation of $l$ is
	\[\sqrt{3}\,x+y-2\sqrt{3}\,x_0-y_0+4=0.
\]It can be easily seen that $l$ separates $Q_1$ and $Q_2$. The distance of $l$ from $Q_1$ is
	\[d(Q_1,l)=\frac{\left|\sqrt{3}\cdot 0+y_0-2\sqrt{3}\,x_0-y_0+4\right|}{\sqrt{3+1}}=\frac{\left|4-2\sqrt{3}\,x_0\right|}{2}=\left|2-\sqrt{3}\,x_0\right|>2=R
\]Thus the line $l$ is disjoint to the disc $K$ and $K$ is below the line $l$. Now we define the continuous function
	\[f\colon \mathbb{R}\to\mathbb{R},\quad f(x)=\begin{cases}
	-\sqrt{3}\cdot x+2\sqrt{3}\,x_0+y_0-4,& \textrm{if } x\leq 2x_0-\sqrt{3},\\
	y_0-\sqrt{R^2-(x-2x_0)^2},& \textrm{if } 2x_0-\sqrt{3}<x<2x_0-\sqrt{2},\\
	y_0-\sqrt{2},& \textrm{if } 2x_0-\sqrt{2}\leq x.
	\end{cases}
\]The graph of $f$ is the union of the circular arc $S'$ (connecting $P_1'$ and $P_2'$), and two rays: one of them is the horizontal ray on the right of $P_1'$, the other one is the ray consisting of the points of $l$ on the left of $P_2'$. Hence $f$ is a continuous, positive-valued function, whose epigraph $L$ is disjoint to $K$. Moreover, $f$ is a convex function and $L$ is a convex set.

It's clear, that if $-\frac{1}{2}\leq t\leq \frac{1}{2}$, then the distance of the point $(x_0,t)$ measured from $L$ is the same as the distance measured from $K$. Thus
	\[d((x_0,t),L)=d((x_0,t),S)=d((x_0,t),Q_2)-R=\sqrt{|x_0|^2+|y_0-t|^2}-R
\]
	\[=d((x_0,t),Q_1)-R=d((x_0,t),K).
\]This shows that every point of the vertical line segment $\overline{P_1P_2}$ is an element of the equidistant set $\{K=L\}$.

\begin{figure}
\def\tabularxcolumn#1{m{#1}}
\begin{tabularx}{\linewidth}{@{}cXX@{}}

\begin{tabular}{cc}
\subfloat[]{\includegraphics[height=5.6cm]{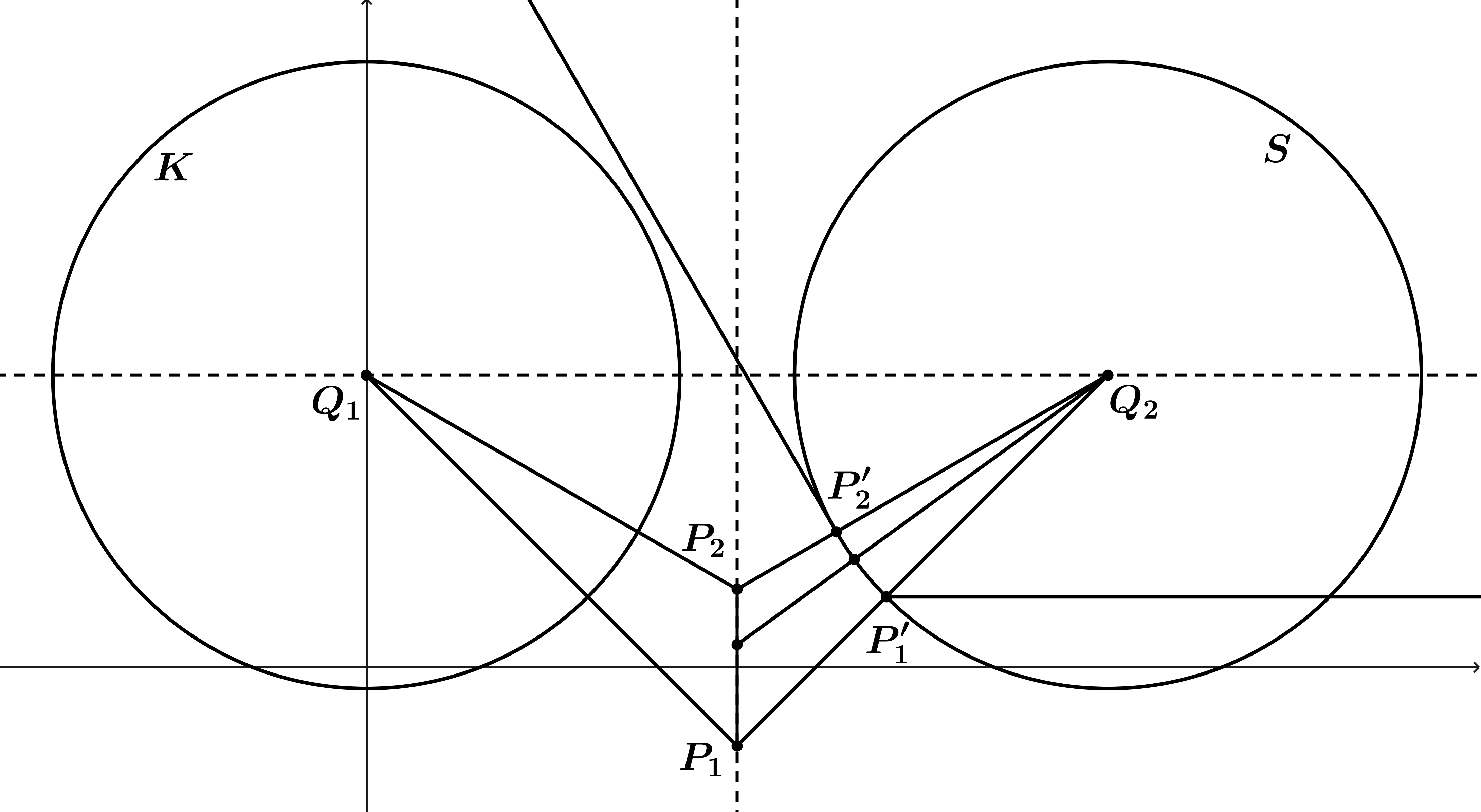}} 
   & \subfloat[]{\includegraphics[height=7cm]{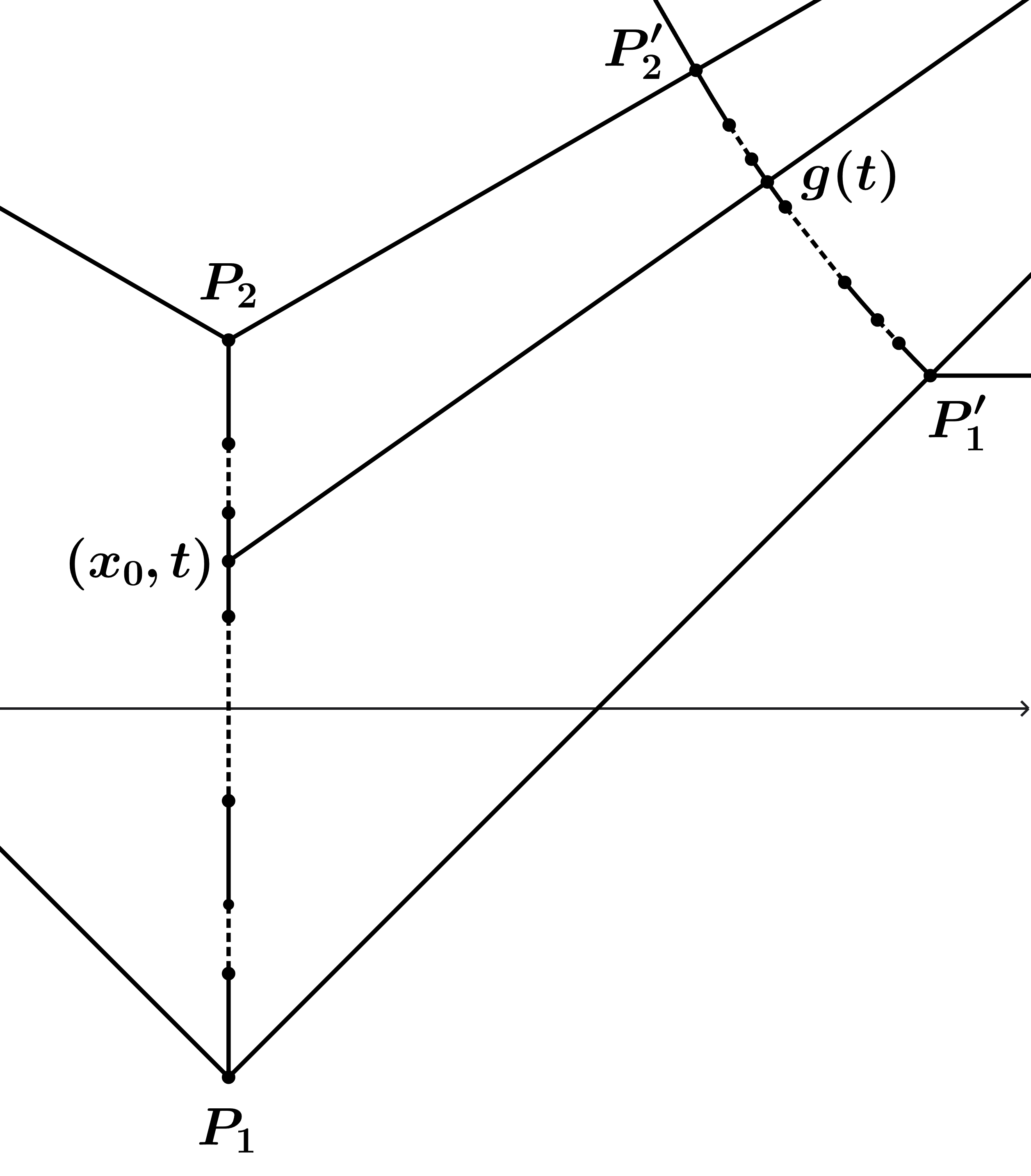}}
\end{tabular}

\end{tabularx}

\caption{\label{cantorfigure}}
\end{figure}

The Smith-Volterra-Cantor set (also known as the fat Cantor set) is constructed by subtracting a countable union of pairwise disjoint open intervals from the $[0,1]$ interval. The Smith-Volterra-Cantor set contains no interval, yet it has a positive Lebesgue measure. Now we will modify the set $L$ above such that the intersection of the equidistant set $\{K=L\}$ with the vertical line through $x_0$ is a shifted version of the Smith-Volterra-Cantor set.

Let $]a_n,b_n[$ be the $n$-th interval subtracted from the $[0,1]$ interval during the construction of the Smith-Volterra-Cantor set. Then consider the points
	\[T_n=g\left(a_n-\frac{1}{2}\right)\quad\textrm{and}\quad U_n=g\left(b_n-\frac{1}{2}\right)
\]and let $S_n$ denote the disk segment bounded by the chord $\overline{T_nU_n}$ of the disk centered at $Q_2$ with radius $R=2$. Since the function $g$ is one-to-one and the intervals $]a_n,b_n[$ are pairwise disjoint, the disk segments $S_n$ are also pairwise disjoint. Then let's remove all the disk segments $S_n$ together with their interior to get the set $L'$, but we don't remove the points of the chords $\overline{T_nU_n}$. Hence $L'$ is a closed set and it is convex as the intersection of the convex set $L$ with half-planes bounded by the lines $\overleftrightarrow{T_nU_n}$.

It's clear that, for any $t\in[-\frac{1}{2},\frac{1}{2}]$ the distance of the point $(x_0,t)$ from the disk $K$ is the same as its distance from the circle $S$, which is the same as the distance from the point $g(t)$ of the circular arc $S'$. If there exists $n\in \mathbb{N}$ such that $t\in \big]a_n-\frac{1}{2},b_n-\frac{1}{2}\big[$, then the point $g(t)$ is not an element of $L'$, thus the distance of $(x_0,t)$ from $K$ is less than its distance from $L'$ and hence $(x_0,t)\notin \{K=L\}$. On the other hand, if $t\notin \big]a_n-\frac{1}{2},b_n-\frac{1}{2}\big[$ for all $n\in \mathbb{N}$, then the point $g(t)$ is in the set $L'$, thus the distance of $(x_0,t)$ from $K$ is the same as its distance from $L'$ and hence $(x_0,t)\in \{K=L\}$. Therefore, the point $(x_0,t)$ is an element of the equidistant set $\{K=L\}$ if and only if $t+\frac{1}{2}$ is not an element of any set $]a_n,b_n[$ removed from the $[0,1]$ interval during the construction of the Smith-Volterra-Cantor set. This clearly shows that the set
	\[\left\{t+\frac{1}{2}\in \mathbb{R}\,\bigg|\,t\in \left[-\frac{1}{2},\frac{1}{2}\right],\,(x_0,t)\in \{K=L\}\right\}
\]is the Smith-Volterra-Cantor set. Hence the intersection of $\{K=L\}$ with the vertical line through $x_0$ contains no interval, yet it has a positive Lebesgue measure.

\section{Circles and spheres}

Let $L:=\{(x,y)\ | \ f(x)\leq y\}$ be the epigraph of a continuous, positive-valued function $f\colon \mathbb{R}^n\to \mathbb{R}^+$ and $K\subset \mathbb{R}^{n+1}$ be a solid sphere of dimension $n+1$. It is centered at the origin with radius $R>0$ such that $K\cap L=\emptyset$, that is
$$x^2+f^2(x) > R^2 \quad (x\in \mathbb{R}),\ \ \textrm{or} \ \ |x|^2+f^2(x) > R^2 \quad (x\in \mathbb{R}^n)$$
depending on the dimension of the embedding space $\mathbb{R}^{n+1}$ ($n=1$ or $n\geq 2$). Since the focal sets are assumed to be disjoint, there are certainly no equidistant points inside $K$. Therefore it can be considered a sphere/ball or a circle/circular disk in an equivalent way.  

\begin{lemma} For any $x\in \mathbb{R}^n$, the equidistant point $(x,y)$ belonging to the focal sets $K$ and $L$ is uniquely determined.
\end{lemma}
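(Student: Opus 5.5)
The statement follows from the general results of Section 2, with existence and uniqueness handled separately. For existence, I will apply Lemma \ref{lem:ex} (equivalently, Theorem 1). The ball $K$ is compact and contains the point $(0,-R)$, whose last coordinate $-R<0<f(x)$ for every $x\in\mathbb{R}^n$. Hence every vertical line contains at least one equidistant point.

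For uniqueness, I will apply Theorem 3 (the necessary and sufficient condition for the existence of equidistant functions). By that theorem it suffices to check that there is no point $x_0\in\mathbb{R}^n$ and no pair of real numbers $0<y_1<y_2$ with $(x_0,y_1)\notin K$ and $(x_0,y_2)\in K$. The hypotheses of Theorem 3 hold: $L$ is the epigraph of a continuous positive function, $K$ is closed, and $K\cap L=\emptyset$.

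The geometric check is immediate.
\begin{itemize}
\item Suppose $(x_0,y_2)\in K$ with $y_2>0$. Then $|x_0|^2+y_2^2\leq R^2$.
\item For any $0<y_1<y_2$ we have $y_1^2<y_2^2$, so $|x_0|^2+y_1^2<R^2$.
\item Therefore $(x_0,y_1)\in K$ as well.
\end{itemize}
In other words, the intersection of a vertical line with the ball is a segment symmetric about the hyperplane $y=0$, so in the positive half space it is an interval starting at height $0$. The forbidden configuration therefore cannot occur. Theorem 3 then gives at most one equidistant point on each vertical line, and together with the existence part this yields exactly one.

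Alternatively, one can cite Corollary \ref{sum:01} directly. $K$ is convex and compact with a point of nonpositive last coordinate, so $G^-=G^+$, and together with existence this gives uniqueness.

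I do not expect a real obstacle. The only care needed is to check that the assumptions of Theorem 3 and Lemma \ref{lem:ex} are met. In particular, Lemma \ref{lem:ex} needs a point of $K$ lying strictly below the graph of $f$, and $(0,-R)$ serves as that point since $f$ is positive.
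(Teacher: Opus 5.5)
Your proof is correct: $(0,-R)\in K$ lies strictly below the graph of $f$, so Lemma~\ref{lem:ex} gives existence. The trace of the ball on any vertical line is empty or a segment symmetric about height $0$, so the configuration excluded in Theorem~3 cannot occur, and this gives uniqueness.

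This is the reduction the paper only alludes to (``a direct consequence of Corollary~\ref{sum:01}''). The proof it actually writes out is direct and self-contained. Assuming equidistant points $(x_0,y_1)$, $(x_0,y_2)$ with $y_1<y_2$, it uses $d(p,K)=|p|-R$ for $|p|\geq R$. It compares the two equidistance relations at the foot point $(x_1,f(x_1))$ of the lower point, obtaining $f(x_1)(y_1-y_2)\geq R(r_1-r_2)$, hence $r_1<r_2$ because $f>0$. The vertical ray above $(x_1,f(x_1))$ lies in $L$, and it then enters the open ball of the larger radius $r_2-R$ about the higher center $(x_0,y_2)$. This contradicts $d((x_0,y_2),L)=r_2-R$.

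Your route has three advantages:
\begin{itemize}
\item It is shorter.
\item It makes existence explicit, whereas the paper's argument only rules out a second point.
\item It isolates the one property of $K$ that matters, namely no vertical gaps above height $0$, so it works verbatim for any closed $K$ with that property.
\end{itemize}
The cost is dependence on the lengthy case analysis behind Theorem~3. The paper's argument is elementary and independent of Section~2, but it is tied to the metric of a centered ball.

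One precision about your alternative: Corollary~\ref{sum:01} is an equivalence. Convexity, compactness and a point with $y_*\leq 0$ do not by themselves yield $G^-=G^+$; Theorem~4 gives counterexamples. You still need the vertical-slice check from your main argument.
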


\begin{proof} Although the statement is a direct consequence of Corollary \ref{sum:01}, we present a more simple direct proof as well.  
Suppose, in the contrary, that $(x_0, y_1)$ and $(x_0, y_2)$ are equidistant points such that $y_1 < y_2$. Let $(x_1, f(x_1))$ and $(x_2, f(x_2))$ be the points of the epigraph, where the distances 
$$d((x_0, y_1),L)=d((x_0, y_1),(x_1, f(x_1)))\ \ \textrm{and}\ \ d((x_0, y_2),L)=d((x_0, y_2),(x_2, f(x_2)))$$
are attained at. Introducing the polar distances
$$r_1=\sqrt{|x_0|^2+y_1^2} \ \ \textrm{and}\ \ r_2=\sqrt{|x_0|^2+y_2^2}$$
from the origin, we can write that
$$|x_1-x_0|^2+(f(x_1)-y_1)^2=(r_1-R)^2$$
because $(x_0,y_1)$ is an equidistant point. On the other hand, 
$$|x_1-x_0|^2+(f(x_1)-y_2)^2\geq (r_2-R)^2$$
because 
$$d((x_0,y_2), (x_1,f(x_1)))\geq d((x_0, y_2),L)=d((x_0, y_2),(x_2, f(x_2)))=r_2-R.$$
Therefore
$$(f(x_1)-y_2)^2-(f(x_1)-y_1)^2\geq (r_2-R)^2-(r_1-R)^2$$
and, consequently,
$$f(x_1)(y_1-y_2)\geq R(r_1-r_2).$$
Since $f$ is positive-valued and $y_1<y_2$, the left-hand side is negative. This means that $r_1<r_2$, that is $r_1-R<r_2-R$.
Using that $(x_1, f(x_1))$ is on the sphere centered at $(x_0, y_1)$ with radius $r_1-R$, we have that the vertical ray 
$$\{ (x_1, y)\ | \ f(x_1)\leq y\} \subset L$$
intersects the interior of the sphere centered at a higher vertical position $(x_0, y_2)$ with a larger radius $r_2-R$, that is
$$r_2-R=d((x_0,y_2),(x_2, f(x_2)))>d((x_0, y_2), L)$$
which is a contradiction because $(x_0, y_2)$ is an equidistant point. 
\end{proof}

\begin{definition} The equidistant function associated with $K$ and $L$ is called a circular/spherical equidistant function depending on the dimension of the embedding space $\mathbb{R}^{n+1}$. 
\end{definition}

\subsection{The parametric expressions of the equidistant points in the plane}

Let $K\subset \mathbb{R}^2$ be a circle centered at the origin of the Euclidean plane and consider the focal set $L$ as the epigraph of a positive-valued, continuously differentiable convex function $f\colon \mathbb{R}\to \mathbb{R}^+$ such that $K\cap L=\emptyset$, that is
$$t^2+f^2(t) > R^2 \quad (t\in \mathbb{R}),$$
where $R$ is the radius of the circle. The outer unit normal to the graph is
$$N=\frac{1}{\sqrt{1+f'^{2}}} (f', -1).$$

\begin{theorem} The equidistant points can be written in the parametric form
\begin{equation} 
\label{eqcirc:x}
 x(t)=t + \frac{f'(t)}{2\sqrt{1+f'^{2}(t)}}\cdot\frac{t^2+f^2(t)-R^2}{R-\alpha(t)}
\end{equation}
and
\begin{equation} 
\label{eqcirc:y}
y(t)=f(t) - \frac{1}{2\sqrt{1+f'^{2}(t)}}\cdot\frac{t^2+f^2(t)-R^2}{R-\alpha(t)},
\end{equation}
where the parameter $t\in \mathbb{R}$ satisfies inequality
\begin{equation}
\label{condcirc:01} 
\alpha(t):=\frac{tf'(t)-f(t)}{\sqrt{1+f'^2(t)}}< R.
\end{equation}
\end{theorem}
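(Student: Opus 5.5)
Let $(x,y)$ be the equidistant point over a given abscissa (unique by the previous Lemma), with common distance $r=d((x,y),K)=d((x,y),L)>0$. Since $(x,y)$ lies outside $K$, we have $r=\sqrt{x^2+y^2}-R$. Because $L$ is closed and convex, the distance to $L$ is attained at a unique foot point, and this point lies on the graph, say at $(t,f(t))$. Since $f$ is continuously differentiable and the point lies below the graph, the first-order condition says that $(x,y)$ sits on the outer normal line:
$$(x,y)=(t,f(t))+r N(t),\quad\text{i.e.}\quad x=t+\frac{r f'(t)}{\sqrt{1+f'^2(t)}},\quad y=f(t)-\frac{r}{\sqrt{1+f'^2(t)}}.$$
Everything then reduces to expressing $r$ in terms of $t$ from the equidistance condition
$$|(t,f(t))+rN(t)|=r+R.$$
Squaring gives
$$t^2+f^2(t)+2r\langle (t,f(t)),N(t)\rangle+r^2=r^2+2rR+R^2.$$
Here $\langle (t,f(t)),N\rangle=\alpha(t)$, so the quadratic terms cancel and we get the linear equation
$$t^2+f^2(t)-R^2=2r\,(R-\alpha(t)).$$
The left-hand side is positive by the disjointness assumption, and $r>0$. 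Hence necessarily $\alpha(t)<R$ and
$$r=\frac{t^2+f^2(t)-R^2}{2(R-\alpha(t))}.$$
Substituting this $r$ into the normal-line formulas yields exactly \eqref{eqcirc:x} and \eqref{eqcirc:y}. This proves that every equidistant point has the stated form with a parameter $t$ satisfying \eqref{condcirc:01}.

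For the converse, take any $t$ with $\alpha(t)<R$, define $r>0$ by the formula above, and set $P=(t,f(t))+rN(t)$. I must show $P$ is equidistant.

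\begin{itemize}
\item \textbf{Distance to $L$.} By convexity, $L$ lies in the half-plane above the tangent line at $(t,f(t))$. The point $P$ lies on the outer normal through the foot point, so $d(P,L)=r$, attained at $(t,f(t))$.
\item \textbf{Distance to $K$.} Reversing the algebra gives $|P|^2=(r+R)^2$. Since $r+R>0$, this means $|P|=r+R$, so $d(P,K)=|P|-R=r$.
\end{itemize}

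Thus $P$ is equidistant. By the uniqueness Lemma it is the equidistant point over its abscissa $x(t)$.

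The main obstacle I anticipate is the rigor in the first direction: justifying that the nearest point of $L$ lies on the graph and that the normal-line relation holds. The nearest point cannot be interior to $L$, since $P\notin L$, so it lies on the graph. The map $s\mapsto |(s,f(s))-P|^2$ attains a minimum at $t$, and differentiating gives $(t-x)+(f(t)-y)f'(t)=0$. Combined with $y<f(t)$, which holds because $P$ lies below the graph, this gives the outer normal direction with the correct sign. A second point to check in the converse is the sign $r+R>0$ when taking square roots; it is immediate from $r>0$.
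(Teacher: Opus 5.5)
Your proof is correct and follows the paper's route: write the equidistant point as $(t,f(t))+rN(t)$ with $r=\sqrt{x^2+y^2}-R$, square $|(t,f(t))+rN(t)|=r+R$ using $\langle (t,f(t)),N(t)\rangle=\alpha(t)$, and solve the resulting linear equation for $r$. You go slightly beyond the paper in two ways. First, you deduce $\alpha(t)<R$ from $t^2+f^2(t)>R^2$ and $r>0$, whereas the paper only says ``provided that''. Second, you prove the converse that every admissible $t$ gives an equidistant point, which the paper uses implicitly when it later calls $x(t)$ a one-to-one correspondence. One small fix: $y<f(t)$ should be justified by minimality of the foot point, not by $P$ lying below the graph at its own abscissa. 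If $y>f(t)$, then $(t,y)\in L$ would be strictly closer to $P$. If $y=f(t)$, the first-order condition would force $x=t$ and hence $P\in L$.
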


\begin{proof}
Let $(x,y)$ be an equidistant point. Using the convexity of the epigraph of the function $f$, consider the uniquely determined closest point of the form $(t,f(t))$ to $(x,y)$. This means that the difference vector $(x,y)-(t,f(t))$ is proportional to the outer unit normal, that is 
$$(x,y)=(t,f(t))+sN(t).$$
In a more detailed form
$$x(t)= t + \frac{s(t) f'(t)}{\sqrt{1+f'^{2}(t)}}\ \ \textrm{and}\ \ y(t) = f(t) - \frac{s(t)}{\sqrt{1+f'^{2}(t)}},$$ 
where 
\begin{equation}
\label{dist:01}
s(t)=\sqrt{x^2(t)+y^2(t)}-R
\end{equation}
is the common distance from the focal sets $L$ and $K$.
Therefore
$$(s(t)+R)^2=x^2(t)+y^2(t)=\left(t + \frac{s(t) f'(t)}{\sqrt{1+f'^{2}(t)}}\right)^2+\left(f(t) - \frac{s(t)}{\sqrt{1+f'^{2}(t)}}\right)^2$$
and, consequently,
\begin{equation}
\label{dist:02}
s(t)=\frac{1}{2}\frac{t^2+f^2(t)-R^2}{R-\alpha(t)}
\end{equation}
provided that inequality \eqref{condcirc:01} is satisfied, where $\alpha$ measures the signed distance of the tangent line to $f$ from the origin\footnote{The signed distance is negative if the origin is under the tangent line of the function $f$ and positive if the origin is over the tangent line of $f$.}. 
\end{proof}

\subsubsection{Critical parameters} Consider the domain
$$ D:=\{t \in \mathbb{R} \mid \alpha(t)<R \}$$
for the parameters of the expressions \eqref{eqcirc:x} and \eqref{eqcirc:y}; we will call the elements of $D$ \emph{admissible parameters} and the elements of the complement are the \emph{critical parameters}. The origin is obviously an admissible parameter together with the elements of an open neighbourhood because of
$$\alpha(0)=\dfrac{-f(0)}{\sqrt{1+f'^2(0)}}< R,$$
where $\alpha$ is continuous. Let us consider the set $C^+:=\{t\in \mathbb{R}^+ \mid \alpha(t)\geq R\}$ of positive critical parameters.  

\begin{lemma} If $t_0\in C^+$, then $\alpha(t)\geq \alpha(t_0)$ for any $t>t_0$; that is, $t\in C^{+}$.
\end{lemma}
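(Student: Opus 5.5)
The proof goes through the derivative of $\alpha$, first for $f\in C^2$ and then for $C^1$ convex $f$ by smoothing. Write $P(t)=(t,f(t))$, so that
$$\alpha(t)=\langle P(t),N(t)\rangle,$$
the support-type quantity measuring the signed distance of the tangent line from the origin.

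\emph{Smooth case.} Since $\langle P'(t),N(t)\rangle=0$, differentiation gives $\alpha'=\langle P,N'\rangle$. A direct computation gives
$$N'(t)=\frac{f''(t)}{(1+f'^2(t))^{3/2}}\,(1,f'(t)),$$
and hence
$$\alpha'(t)=\frac{f''(t)\,\bigl(t+f(t)f'(t)\bigr)}{(1+f'^2(t))^{3/2}}.$$
By convexity $f''\ge 0$, so it suffices to show $t+f(t)f'(t)\ge 0$ for $t\ge t_0$.

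This follows from the hypothesis. Since $t_0\in C^+$, we have $\alpha(t_0)\ge R>0$, i.e.\ $t_0f'(t_0)-f(t_0)>0$. As $t_0>0$ and $f>0$, this forces $f'(t_0)>f(t_0)/t_0>0$. Because $f'$ is nondecreasing, $f'(t)>0$ for all $t\ge t_0$. With $t>0$ and $f>0$ this gives $t+f(t)f'(t)>0$ on $[t_0,\infty)$. So $\alpha$ is nondecreasing there, $\alpha(t)\ge\alpha(t_0)\ge R$, and $t\in C^+$.

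\emph{General $C^1$ convex case.} Here $f$ is only $C^1$, so I would pass through a smoothing argument. Let $f_\varepsilon=f*\rho_\varepsilon$ for a standard mollifier $\rho_\varepsilon$. Then $f_\varepsilon$ is $C^\infty$, convex and positive, and $f_\varepsilon\to f$, $f_\varepsilon'\to f'$ locally uniformly. Let $\alpha_\varepsilon$ be the corresponding function.

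Since $f'(t_0)>0$, we have $f_\varepsilon'(t_0)>0$ for small $\varepsilon$, and by monotonicity $f'_\varepsilon>0$ on $[t_0,\infty)$. The computation above then shows that $\alpha_\varepsilon$ is nondecreasing on $[t_0,\infty)$, so $\alpha_\varepsilon(t)\ge\alpha_\varepsilon(t_0)$ for $t>t_0$. Note that only the positivity $f'(t_0)>0$ is needed here, not $\alpha_\varepsilon(t_0)\ge R$. Letting $\varepsilon\to0$ and using $\alpha_\varepsilon\to\alpha$ pointwise, which holds because $\alpha$ is a continuous expression in $t,f,f'$, yields $\alpha(t)\ge\alpha(t_0)\ge R$.

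\emph{Main obstacle.} The delicate point is that $\alpha$ need not be differentiable for a merely $C^1$ convex $f$. A purely synthetic comparison of the tangent lines $\ell_{t_0}$ and $\ell_t$ runs into trouble: the intercept decreases, but the normalising factor $\sqrt{1+f'^2}$ grows. The mollification step is what I expect to need care, and the only thing that must survive the approximation is the sign condition $f'_\varepsilon>0$ on $[t_0,\infty)$, which holds uniformly for small $\varepsilon$ as argued above.
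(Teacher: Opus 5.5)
Your proof is correct, but it follows a different route from the paper's. You obtain monotonicity from $\alpha'=\langle P,N'\rangle=f''(t+ff')/(1+f'^2)^{3/2}$, which needs $f\in C^2$, and then recover the $C^1$ case by mollification. Those details check out: convexity and positivity survive convolution, $f_\varepsilon'(t_0)\to f'(t_0)>0$, and $\alpha_\varepsilon\to\alpha$ pointwise.

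The paper needs no smoothing. Written in your notation, its proof is the splitting
\[
\alpha(t)-\alpha(t_0)=\langle P(t)-P(t_0),N(t)\rangle+\langle P(t_0),N(t)-N(t_0)\rangle .
\]
The first term equals $\bigl((t-t_0)f'(t)-(f(t)-f(t_0))\bigr)/\sqrt{1+f'^2(t)}$. It is nonnegative by the mean value theorem and the monotonicity of $f'$, i.e.\ by convexity. The second term equals $h_0(f'(t))-h_0(f'(t_0))$, where $h_0(m)=(t_0m-f(t_0))/\sqrt{1+m^2}$. This $h_0$ is increasing for $m>-t_0/f(t_0)$ because $h_0'(m)=(t_0+f(t_0)m)/(1+m^2)^{3/2}$, and $0<f'(t_0)\le f'(t)$, so the second term is nonnegative too.

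These two terms are exactly the finite-difference versions of your $\langle P',N\rangle=0$ and $\langle P,N'\rangle$. So your remark that a synthetic tangent-line comparison fails because the normalising factor grows is mistaken. First rotate the line about $(t_0,f(t_0))$, then translate it with the slope held fixed; this handles the normalisation cleanly.

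Your version gives the explicit formula for $\alpha'$, which the paper uses anyway later when proving $x'(t)>0$. It also makes the sign condition transparent as $t+ff'=\tfrac12(t^2+f^2)'>0$. The paper's version is a short, self-contained argument at the natural $C^1$ regularity, with no approximation step.
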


\begin{proof} The proof is based on the convexity of the function $f$ provided that it is continuously differentiable. Since $f$ is a positive-valued function, $t_0\in C^+$ implies by \eqref{condcirc:01} that $f'(t_0)>0$. Observe that   
$$h_0(x):=\frac{t_0 x-f(t_0)}{\sqrt{1+x^2}}\quad (x>-t_0/f(t_0))$$
is a monotone increasing function because of
$$h_0'(x)=\frac{t_0+f(t_0)x}{(1+x^2)^{3/2}} > 0 \quad (x>-t_0/f(t_0)).$$
Since $0<f'(t_0)\leq f'(t)$, it follows that
$$h_{0}(f'(t))\geq h_{0}(f'(t_0)) \ \  \Rightarrow \ \ \frac{t_0f'(t)-f(t_0)}{\sqrt{1+f'^2(t)}}\geq \frac{t_0f'(t_0)-f(t_0)}{\sqrt{1+f'^2(t_0)}}=\alpha(t_0).$$
On the other hand, by the mean value theorem of differential calculus, 
$$tf'(t)-f(t)-(t_0f'(t)-f(t_0))=(t-t_0)(f'(t)-f'(\xi))\geq 0$$
because $f'$ is a monotone increasing function and $t_0 < \xi < t$. We have that
$$ \alpha(t)=\frac{tf'(t)-f(t)}{\sqrt{1+f'^2(t)}}\geq \frac{t_0f'(t)-f(t_0)}{\sqrt{1+f'^2(t)}}\geq \frac{t_0f'(t_0)-f(t_0)}{\sqrt{1+f'^2(t_0)}}=\alpha(t_0)$$
as was to be proved. 
\end{proof}

\begin{corollary}
$C^+$ is either empty or $C^+=\mathopen]t_0^+, \infty\mathclose[$ where $0 < t_0^+:=\inf C^+$.
\end{corollary}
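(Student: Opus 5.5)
The plan is to combine the monotonicity lemma just proved with the continuity of $\alpha$. Assume $C^+\neq\emptyset$ and set $t_0^+:=\inf C^+$. The goal is to show that $C^+$ consists exactly of the parameters lying to the right of $t_0^+$, so that $C^+=\mathopen]t_0^+,\infty\mathclose[$ in the notation of the statement.

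The first step is the inclusion $\mathopen]t_0^+,\infty\mathclose[\subseteq C^+$. Take any $t>t_0^+$. By the definition of the infimum there is some $t_1\in C^+$ with $t_0^+\le t_1<t$. The preceding lemma then gives $\alpha(t)\ge\alpha(t_1)\ge R$, that is, $t\in C^+$.

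The second step is the reverse direction: every element of $C^+$ satisfies $t\ge t_0^+$, which holds by the definition of $t_0^+$. Together with the first step, this shows that $C^+$ is an interval whose left endpoint is $t_0^+$ and which is unbounded to the right.

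The third step is to show that $t_0^+>0$. The paper has already observed that
$$\alpha(0)=-\frac{f(0)}{\sqrt{1+f'^2(0)}}<0<R.$$
Since $\alpha$ is continuous ($f$ is $C^1$), there is $\delta>0$ with $\alpha<R$ on $[0,\delta)$. Hence $C^+\subseteq[\delta,\infty\mathclose[$, and so $t_0^+\ge\delta>0$.

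The only delicate point, which I expect to be the main obstacle, is the endpoint $t_0^+$ itself. Continuity of $\alpha$ forces $\alpha(t_0^+)\ge R$, because $t_0^+$ is a limit of points of $C^+$. Combined with $t_0^+>0$, this places $t_0^+$ in $C^+$ under the non-strict inequality used to define $C^+$. So the equality will come out exactly in the form stated only if $C^+$ is read with the strict condition $\alpha(t)>R$, or with the endpoint $t_0^+$ excluded. Under the definition as written, the argument yields $C^+=[t_0^+,\infty\mathclose[$. In the proof I would therefore state which reading of $C^+$ the stated equality $C^+=\mathopen]t_0^+,\infty\mathclose[$ requires, rather than claim it for the non-strict definition.
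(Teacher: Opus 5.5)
Your proof is correct and follows the same route as the paper, which states the corollary as an immediate consequence of the monotonicity lemma; continuity of $\alpha$ together with $\alpha(0)<R$ gives $t_0^+>0$. Your endpoint remark is also right: with the non-strict definition, $t_0^+\in C^+$, so $C^+=[t_0^+,\infty\mathclose[$ (and likewise $C^-=\mathopen]-\infty,t_0^-]$), which is exactly what the paper's next claim, that the complement is the open interval $D=\mathopen]t_0^-,t_0^+\mathclose[$, requires, so the open bracket in the statement is a typo.
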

In a similar way, consider the set $C^-:=\{t\in \mathbb{R}^-\ | \ \alpha(t)\geq R\}$ of the negative critical parameters. $C^-$ is either empty or $C^-=\mathopen]-\infty, t_0^-\mathclose[$, where $0 > t_0^-:=\sup C^-$. Taking the complement of $C^+\cup C^-$, the domain of the  parameters in \eqref{eqcirc:x} and \eqref{eqcirc:y}, that is the set of the admissible parameters is $D=\mathopen]t_0^-, t_0^+\mathclose[$ including the cases $t_0^-=-\infty$ and (or) $t_0^+=\infty$ if there are no negative and (or) positive critical parameters (see Figure 4).

\begin{definition}
\label{equid:par}
The pair of the parametric expressions $x\colon \mathopen]t_0^-, t_0^+\mathclose[\to \mathbb{R}$ and $y\colon \mathopen]t_0^-, t_0^+\mathclose[\to \mathbb{R}$ is called the equidistant parameterization for the graph of the equidistant function.
\end{definition}

\begin{figure}[h!]
\begin{subfigure}{.3\linewidth}
\includegraphics[width=.99\linewidth]{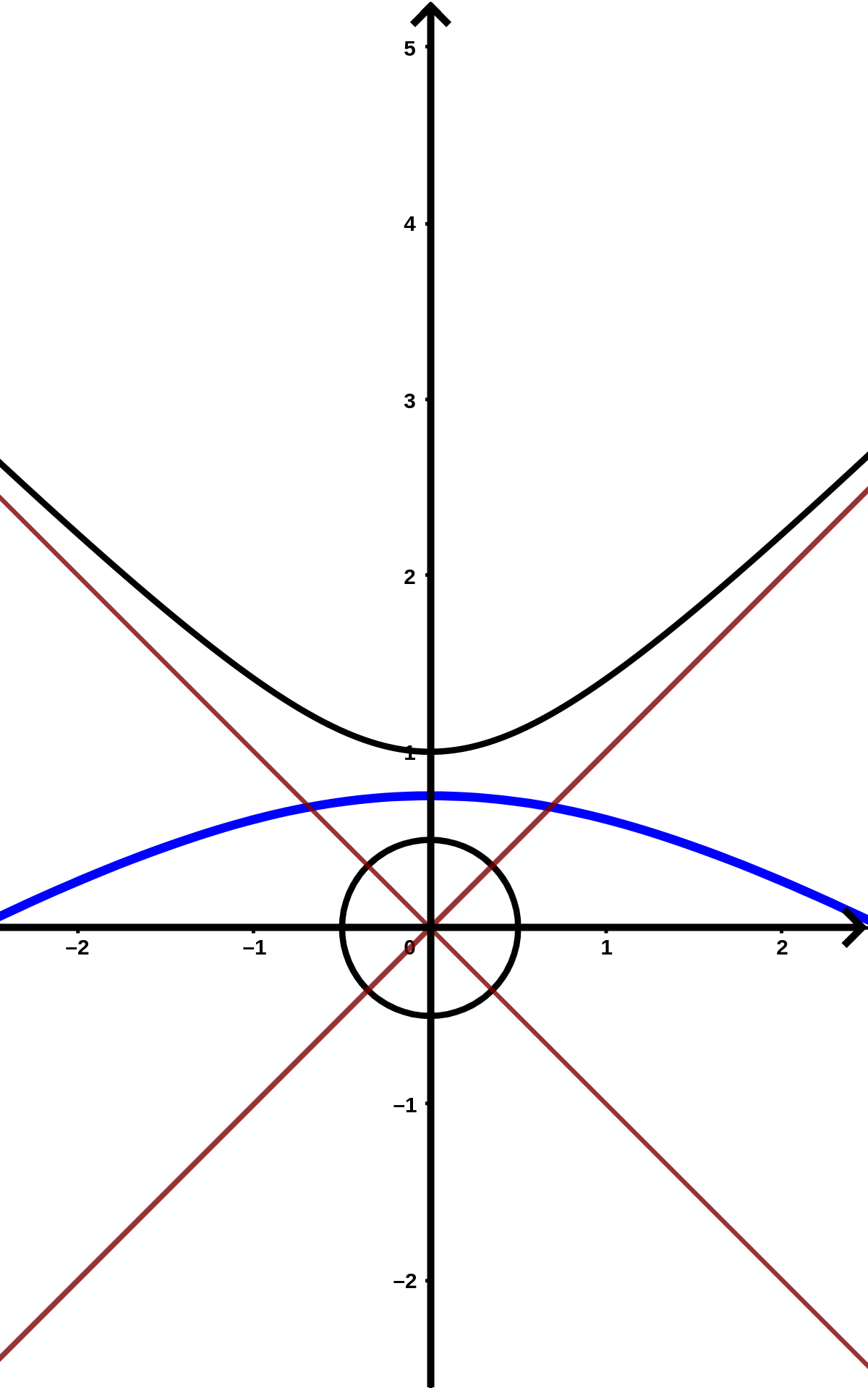}
\caption{ $f(t)=\sqrt{t^2+1}$ }
\end{subfigure} \hfill 
\begin{subfigure}{.3\linewidth}
\includegraphics[width=.99\linewidth]{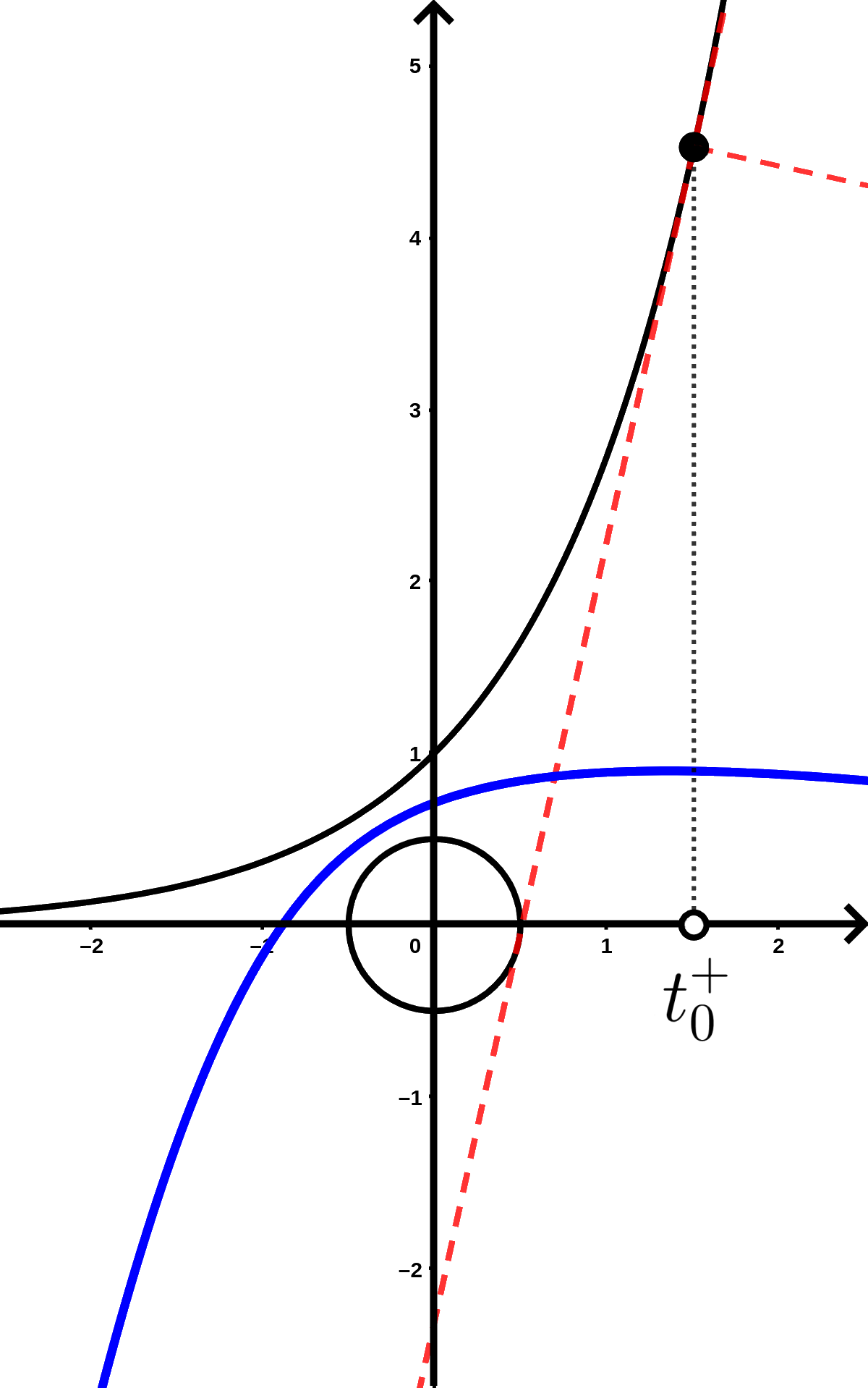}
\caption{ $f(t)=e^t$ }
\end{subfigure} \hfill
\begin{subfigure}{.3\linewidth}
\includegraphics[width=.99\linewidth]{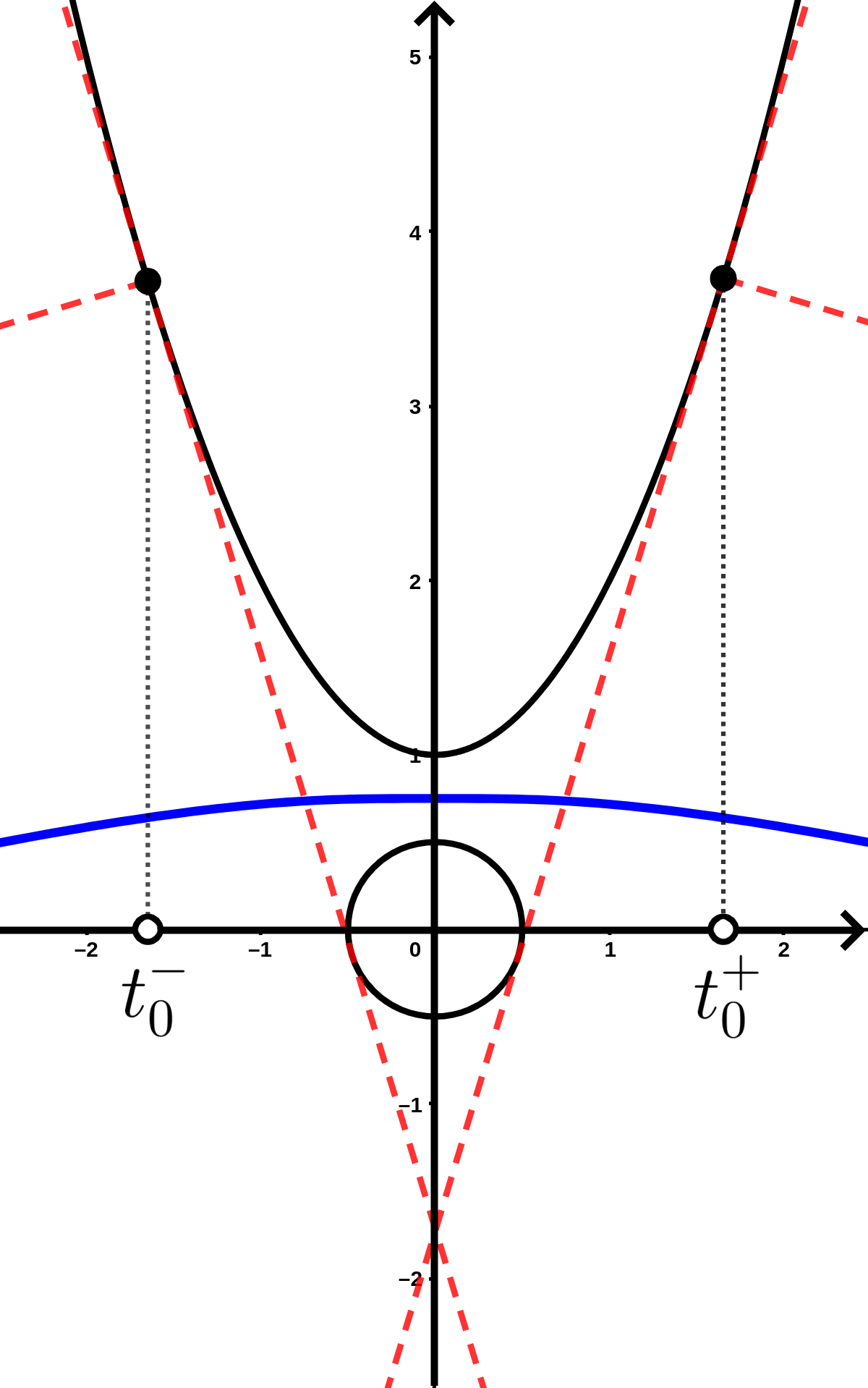}
\caption{ $f(t)=t^2+1$ }
\end{subfigure}
\caption{Some examples demonstrating the cases when (A) there are no critical parameters, so the parameter domain is $D=\mathopen]-\infty, \infty\mathclose[$; \ (B) there are only positive critical parameters: $D=\mathopen]-\infty, t_0^+\mathclose[$;  \ (C) there are both negative and positive critical parameters: $D=\mathopen]t_0^-, t_0^+\mathclose[$.}
\end{figure}

\subsubsection{The expression of the functions $f$ and $f'$ in terms of the equidistant parameterization \eqref{eqcirc:x} and \eqref{eqcirc:y}} According to formulas \eqref{dist:01} and \eqref{dist:02}, we have that
\begin{equation}
\label{square}
\frac{t^2+f^2(t)-R^2}{R-\alpha(t)}=\frac{\sqrt{x^2(t)+y^2(t)}-R}{1/2},
\end{equation}
where 
\begin{equation}
\label{condcirc:11}
x^2(t)+y^2(t)>R^2.
\end{equation}
Using equation \eqref{square}, the parametric expressions can be written as
\begin{equation} 
\label{eqcirc:x1}
 x(t)=t + \frac{\sqrt{x^2(t)+y^2(t)}-R}{\sqrt{1+f'^{2}(t)}}f'(t),
\end{equation}
and
\begin{equation} 
\label{eqcirc:y1}
y(t)=f(t) - \frac{\sqrt{x^2(t)+y^2(t)}-R}{\sqrt{1+f'^{2}(t)}}.
\end{equation}
Equation \eqref{eqcirc:x1} implies that
\begin{equation}
\label{Condcirc:01}
\frac{x(t)-t}{\sqrt{x^2(t)+y^2(t)}-R}=\frac{f'(t)}{\sqrt{1+f'^{2}(t)}},
\end{equation}
that is
\begin{equation}
\label{condcirc:02}
 -1 < \frac{x(t)-t}{\sqrt{x^2(t)+y^2(t)}-R} < 1
\end{equation}
and
$$\frac{1}{\sqrt{1+f'^{2}(t)}}=\sqrt{1-\left (\frac{f'(t)}{\sqrt{1+f'^{2}(t)}}\right)^2}=\sqrt{1-\left (\frac{x(t)-t}{\sqrt{x^2(t)+y^2(t)}-R}\right )^2}=$$
$$\frac{\sqrt{\left (\sqrt{x^2(t)+y^2(t)}-R\right)^2-(x(t)-t)^2}}{\sqrt{x^2(t)+y^2(t)}-R}.$$
Equation \eqref{eqcirc:y1} shows that
\begin{equation}
\label{circfunction}
f(t)=y(t)+\sqrt{\left (\sqrt{x^2(t)+y^2(t)}-R\right)^2-(x(t)-t)^2}.
\end{equation}
Using \eqref{eqcirc:x1} and \eqref{eqcirc:y1}, we can also express the derivative function in terms of the equidistant parameterization as follows:
\begin{equation}
\label{derivfunction:circ}
f'(t)=\frac{x(t)-t}{\sqrt{\left(\sqrt{x^2(t)+y^2(t)}-R\right)^2-(x(t)-t)^2}}.
\end{equation}

\subsubsection{The case of a twice continuously differentiable function: the compatibility of the equidistant parameterization}

Let $f$ be a twice continuously differentiable (positive-valued, convex) function. In this case the equidistant parameterization consists of continuously differentiable functions. The compatibility condition for the equidistant parameterization means the compa\-ti\-bility of the right-hand sides of formulas \eqref{circfunction} and \eqref{derivfunction:circ}, that is
$$\left(y(t)+\sqrt{\left (\sqrt{x^2(t)+y^2(t)}-R\right)^2-(x(t)-t)^2}\right)'=\frac{x(t)-t}{\sqrt{\left(\sqrt{x^2(t)+y^2(t)}-R\right)^2-(x(t)-t)^2}}.$$
\begin{theorem} The compatibility condition for the equidistant parameterization is equivalent to 
\begin{equation}
\label{compcirc}
\left(g(t)-\frac{x(t)}{r(t)}\right)x'(t)=\left(\sqrt{1-g^2(t)}+\frac{y(t)}{r(t)}\right)y'(t),
\end{equation}
where
$$r(t):=\sqrt{x^2(t)+y^2(t)} \ \ \textrm{and} \ \ g(t):=\frac{x(t)-t}{\sqrt{x^2(t)+y^2(t)}-R}=\frac{x(t)-t}{r(t)-R}.$$
\end{theorem}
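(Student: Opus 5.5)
The plan is a direct computation. I rewrite both sides of the compatibility condition in terms of $r$ and $g$, clear denominators, and observe that every factor I multiply or divide by is nonzero on the parameter domain $\mathopen]t_0^-, t_0^+\mathclose[$. Throughout, $x$, $y$ are the equidistant parameterization of Definition \ref{equid:par}. Since $f$ is twice continuously differentiable and $\alpha(t)<R$ on the domain, formulas \eqref{eqcirc:x} and \eqref{eqcirc:y} show that $x$ and $y$ are continuously differentiable. Hence $r$, $g$ and all expressions below are differentiable.

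Introduce the abbreviations $s(t):=r(t)-R$ and $W(t):=\sqrt{s^2(t)-(x(t)-t)^2}$.
By \eqref{condcirc:11} we have $s>0$. By \eqref{condcirc:02} we have $|x-t|<s$, so $W=s\sqrt{1-g^2}>0$. In this notation the compatibility condition reads
$$y'(t)+W'(t)=\frac{x(t)-t}{W(t)}.$$
Differentiate $W^2=s^2-(x-t)^2$ to get
$$W'=\frac{s s'-(x-t)(x'-1)}{W}.$$
Multiplying the compatibility condition by $W\neq 0$ gives an equivalent equation. In it the terms $(x-t)$ cancel, leaving
$$y'\,W+s\,s'-(x-t)\,x'=0 .$$

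Next use $s'=r'=(xx'+yy')/r$. Divide by $s>0$, substitute $W/s=\sqrt{1-g^2}$ and $(x-t)/s=g$, which gives
$$\sqrt{1-g^2}\,y'+\frac{x x'+y y'}{r}-g\,x'=0 .$$
Regrouping the $x'$ and $y'$ terms yields exactly \eqref{compcirc}.

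Every step is either an algebraic identity or a multiplication or division by the positive quantities $W$ and $s$. The chain of implications is therefore reversible, which proves the equivalence.

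The only point requiring care is the nonvanishing of $W$ and $s$; everything else is routine differentiation. This nonvanishing is supplied by the strict inequalities \eqref{condcirc:11} and \eqref{condcirc:02}, and \eqref{condcirc:02} is itself strict because it comes from \eqref{Condcirc:01}, where $|f'|/\sqrt{1+f'^2}<1$.
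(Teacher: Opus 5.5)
Your proof is correct and follows essentially the same route as the paper: both reduce the compatibility condition by direct differentiation to the intermediate identity $\sqrt{1-g^2}\,y' + r' - g\,x' = 0$, then substitute $r'=(xx'+yy')/r$ and regroup. The differences are cosmetic. You differentiate $W=(r-R)\sqrt{1-g^2}$ directly instead of first computing $g'$. You also state explicitly that the factors $W$ and $r-R$ you multiply and divide by are nonzero, which the paper uses without comment.
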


\begin{proof}
In terms of the auxiliary functions, 
\begin{equation}
\label{circcond:0}
f(t)=y(t)+(r(t)-R)\sqrt{1-g^2(t)},
\end{equation}
\begin{equation}
\label{circcond:00}
g(t)=\frac{f'(t)}{\sqrt{1+f'^2(t)}}\ \ \Rightarrow \ \ f'(t)=\frac{g(t)}{\sqrt{1-g^2(t)}}.
\end{equation}
Therefore
\begin{equation}
\label{circcond:000}
f'(t)=y'(t)+r'(t)\sqrt{1-g^2(t)}-\left(r(t)-R\right)\frac{g(t)g'(t)}{\sqrt{1-g^2(t)}},
\end{equation}
$$g'(t)=\frac{(x'(t)-1)(r(t)-R)-(x(t)-t)r'(t)}{(r(t)-R)^2}=\frac{x'(t)-1-g(t)r'(t)}{r(t)-R},$$
$$r'(t)= \frac{x(t)x'(t)+y(t)y'(t)}{r(t)}. $$
Thus, using \eqref{circcond:00} and \eqref{circcond:000}, the compatibility equation is equivalent to
\begin{align*}
\frac{g(t)}{\sqrt{1-g^2(t)}} &= y'(t)+\sqrt{1-g^2(t)} \, r'(t)-\frac{g(t)}{\sqrt{1-g^2(t)}} \left (x'(t)-1-g(t)r'(t) \right),  \\
g(t) &= \sqrt{1-g^2(t)} \, y'(t)+(1-g^2(t)) \, r'(t)-g(t)\left(x'(t)-1-g(t)r'(t) \right), \\
0 &= \sqrt{1-g^2(t)} \, y'(t) -g(t)x'(t) +r'(t), \\
0 &= \sqrt{1-g^2(t)} \, y'(t) -g(t)x'(t) + \frac{x(t)x'(t)+y(t)y'(t)}{r(t)}, \\
0 &= \left(\sqrt{1-g^2(t)}+\frac{y(t)}{r(t)}\right)y'(t)-\left(g(t)-\frac{x(t)}{r(t)}\right)x'(t)
\end{align*}
as was to be proved.
\end{proof}

\begin{remark} \label{compcoeffpos} {\emph{In \eqref{compcirc}, the coefficient of $y'(t)$ on the right-hand side is always strictly positive. This is because supposing the converse leads to
\begin{align*}
\sqrt{1-g^2(t)}+\dfrac{y(t)}{r(t)} &\leq 0, \\
y(t) &\leq -r(t) \, \sqrt{1-g^2(t)}, \\
f(t)-(r(t)-R)\sqrt{1-g^2(t)} &\leq -r(t) \, \sqrt{1-g^2(t)}, \\
f(t)+R\sqrt{1-g^2(t)} &\leq 0,
\end{align*}
which is obviously a contradiction. This also allows us to rearrange \eqref{compcirc} for $y'(t)$.}}
\end{remark}

\begin{theorem}
\label{derivnonzero:x}
The function $x\colon \mathopen]t_0^-, t_0^+\mathclose[\to \mathbb{R}$,
$$ x(t)=t + \frac{f'(t)}{2\sqrt{1+f'^{2}(t)}}\cdot\frac{t^2+f^2(t)-R^2}{R-\alpha(t)}$$
is a one-to-one correspondence with nowhere vanishing derivative.
\end{theorem}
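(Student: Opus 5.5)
The plan is to obtain bijectivity geometrically from the earlier uniqueness results, and to handle the derivative separately using the compatibility condition \eqref{compcirc} together with Remark \ref{compcoeffpos}. Throughout, $f$ is assumed twice continuously differentiable as in the present subsection, so that $x$ and $y$ are $C^1$ on $D=\mathopen]t_0^-, t_0^+\mathclose[$.

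\emph{Surjectivity.} Let $x_0\in\mathbb{R}$ be arbitrary. The point $(0,-R)\in K$ satisfies $-R<0<f$, so Lemma \ref{lem:ex} gives an equidistant point $(x_0,y_0)$. Since $L$ is convex, its closest point to $(x_0,y_0)$ is unique; write it as $(t,f(t))$. By the computation in the proof of the parametrization theorem,
$$2s(R-\alpha(t))=t^2+f^2(t)-R^2,\qquad s=\sqrt{x_0^2+y_0^2}-R.$$
Here $s>0$ because the focal sets are disjoint, and the right-hand side is positive because $t^2+f^2(t)>R^2$. Hence $\alpha(t)<R$, so $t\in D$, and $x(t)=x_0$ by \eqref{eqcirc:x}.

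\emph{Injectivity.} Suppose $x(t_1)=x(t_2)$. The points $(x(t_i),y(t_i))$ are equidistant points on the same vertical line, so by the uniqueness lemma of Section 3 they coincide. Convexity of $L$ makes the closest point of $L$ to this common point unique, so $(t_1,f(t_1))=(t_2,f(t_2))$, that is, $t_1=t_2$. Thus $x$ is a continuous bijection of the interval $D$ onto $\mathbb{R}$, and hence strictly monotone.

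\emph{Nonvanishing derivative.} First differentiate the representation $(x,y)=(t,f(t))+s(t)N(t)$. A direct computation gives
$$N'(t)=\frac{f''(t)}{(1+f'^2(t))^{3/2}}\,(1,f'(t)),$$
so
$$(x'(t),y'(t))=\Bigl(1+\frac{s(t)f''(t)}{(1+f'^2(t))^{3/2}}\Bigr)(1,f'(t))+s'(t)N(t).$$
The coefficient of the tangent vector $(1,f')$ is at least $1$, since $s>0$ and $f''\geq 0$ by convexity. Because the tangent vector and $N$ are linearly independent, it follows that $(x',y')$ never vanishes.

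Next, the compatibility condition \eqref{compcirc} holds identically along the parametrization. Indeed, formulas \eqref{circfunction} and \eqref{derivfunction:circ} are genuine identities for the given $f$, and the computation in the compatibility theorem is an equivalence. If $x'(t_0)=0$ at some $t_0$, then the left-hand side of \eqref{compcirc} vanishes. Remark \ref{compcoeffpos} says the coefficient of $y'$ is strictly positive, so $y'(t_0)=0$ as well. This contradicts $(x'(t_0),y'(t_0))\neq 0$.

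The step I expect to need the most care is this last one. The geometric fact that $(x',y')\neq 0$ must be combined correctly with \eqref{compcirc}, and one has to be sure that \eqref{compcirc} is an identity for the given $f$ rather than an extra hypothesis.
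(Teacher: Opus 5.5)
Your proof is correct. For the derivative you do not follow the paper's main argument; you use the indirect argument that the paper records in the Remark right after the theorem.

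The paper's proof differentiates \eqref{eqcirc:x} directly and arrives at
\[
x'(t)=\frac{1}{2}\frac{f''(t)(t^2+f^2(t)-R^2)(R+f(t)\sqrt{1+f'^2(t)})}{(1+f'^2(t))^{3/2}(R-\alpha(t))^2}+\frac{R+f(t)\sqrt{1+f'^2(t)}}{R-\alpha(t)},
\]
which is a nonnegative term plus a positive term. You instead show that the tangential component of $(x',y')$ never vanishes. You then use \eqref{compcirc} together with Remark~\ref{compcoeffpos} to rule out $x'=0$. Your coefficient $1+sf''/(1+f'^2)^{3/2}\geq 1$ is the same identity as the Remark's $x'+f'y'=1+f'^2+\varphi f''$ with $\varphi=s/\sqrt{1+f'^2}$, only written geometrically.

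The two routes buy different things:
\begin{itemize}
\item The direct computation needs neither the compatibility condition nor Remark~\ref{compcoeffpos}. It also gives the sharper fact $x'>0$, so $x$ is increasing.
\item Your route avoids the calculation and carries over verbatim to $\mathbb{R}^n$. That is exactly how the paper proves Theorem~\ref{onetoone:3d}.
\end{itemize}

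On bijectivity you are more thorough than the paper, which only asserts it. Your surjectivity check is a genuine addition: you apply Lemma~\ref{lem:ex} with $(0,-R)\in K$, and you derive $t\in D$ from $2s(R-\alpha(t))=t^2+f^2(t)-R^2$ with both sides positive.

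In the injectivity step you tacitly use that every $t\in D$ gives an equidistant point. This is immediate, but you should say it:
\begin{itemize}
\item $|(x(t),y(t))|=R+s(t)$ follows from the equation defining $s$.
\item $d((x(t),y(t)),L)=s(t)$ holds because the convex set $L$ lies above the tangent line at $(t,f(t))$.
\end{itemize}
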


\begin{proof} Since for any $x\in \mathbb{R}$ we have a uniquely determined equidistant point $(x,y)$ together with a uniquely determined closest point $(t,f(t))$, where the distance of $(x,y)$ to the epigraph is attained at, the function $x\colon \mathopen]t_0^-, t_0^+\mathclose[\to \mathbb{R}$ is a one-to-one correspondence. Using that
$$\alpha'(t)=\frac{f''(t)(t+f(t)f'(t))}{(1+f'^2(t))^{3/2}},$$
a straightforward calculation shows that
$$x'(t)=1+\frac{1}{2}\frac{f''(t)(t^2+f^2(t)-R^2)(R+f(t)\sqrt{1+f'^2(t)})}{(1+f'^2(t))^{3/2}(R-\alpha(t))^2}+\frac{f'(t)(t+f(t)f'(t))}{\sqrt{1+f'^2(t)}(R-\alpha(t))},$$
where
$$\frac{f'(t)(t+f(t)f'(t))}{\sqrt{1+f'^2(t)}(R-\alpha(t))}=\frac{\alpha(t)+f(t)\sqrt{1+f'^2(t)}}{R-\alpha(t)}=-1+\frac{R+f(t)\sqrt{1+f'^2(t)}}{R-\alpha(t)},$$
that is
$$x'(t)=\frac{1}{2}\frac{f''(t)(t^2+f^2(t)-R^2)(R+f(t)\sqrt{1+f'^2(t)})}{(1+f'^2(t))^{3/2}(R-\alpha(t))^2}+\frac{R+f(t)\sqrt{1+f'^2(t)}}{R-\alpha(t)}>0$$
as was to be proved.
\end{proof}

\begin{remark} {\emph{Another, less direct proof can be given as follows: suppose that $x'(t)=0$. The compatibility condition \eqref{compcirc} shows that $y'(t)=0$. Using \eqref{eqcirc:x} and \eqref{eqcirc:y}, we can write the parametric expressions into the form
$$x(t)=t+\varphi(t)f'(t) \ \ \textrm{and} \ \ y(t)=f(t)-\varphi(t).$$
Therefore
$$0=1+\varphi'(t)f'(t)+\varphi(t)f''(t)\ \ \textrm{and}\ \ 0=f'(t)-\varphi'(t).$$
Finally,
$$0=1+f'^2(t)+\varphi(t)f''(t)\geq 1,$$
which is a contradiction. The indirect method will be especially useful in case of higher dimensional spaces.}}
\end{remark}

\subsubsection{Necessary and sufficient conditions for a pair $x(t)$ and $y(t)$ to be the equidistant parameterization of the graph of the  equidistant function belonging to $K$ and $L$}

Let $f$ be a twice continuously differentiable (positive-valued, convex) function. Condition
\begin{equation}
\label{necandsuf:01}
r(t):=\sqrt{x^2(t)+y^2(t)} > R
\end{equation}
is obvious because $r(t)-R$ is the common distance of the equidistant point $(x(t),y(t))$ from the focal sets $K$ and $L$. Introducing the auxiliary function
$$g(t):=\frac{x(t)-t}{\sqrt{x^2(t)+y^2(t)}-R},$$
equation \eqref{circcond:00} shows that $|g(t)|<1$ and
$$\frac{g(t)}{\sqrt{1-g^2(t)}}$$
must be monotone increasing to provide the convexity of the function $f$. In the sense of \eqref{circcond:0}, the expression
$$f(t):=y(t)+\left(r(t)-R\right)\sqrt{1-g^2(t)}$$
must be positive. It is obvious in case of $y(t)\geq 0$. If $y(t)<0$ then the positivity is equivalent to
$$1-g^2(t) > \frac{y^2(t)}{(r(t)-R)^2} \ \ \Rightarrow \ \ 1>g^2(t)+\frac{y^2(t)}{\left (r(t)-R\right)^2}.$$
Combining the cases $y(t)\geq 0$ and $y(t)<0$ into a single formula, we can write that
$$1>g^2(t)+\frac{1-\sgn y(t)}{2} \cdot \frac{y^2(t)}{\left (r(t)-R\right)^2}.$$

\begin{theorem}
\label{equidpar2d:thm}
Let $x\colon \mathopen]t_0^-, t_0^+\mathclose[\to \mathbb{R}$ and $y\colon \mathopen]t_0^-, t_0^+\mathclose[\to \mathbb{R}$ be continuously differentiable functions defined on a possibly degenerate, open interval containing the origin, and suppose that 
\begin{itemize}
\item [(i)] the polar distance is greater than the radius of the circle $K$ centered at the origin, that is
$$r(t):=\sqrt{x^2(t)+y^2(t)} > R,$$
\item[(ii)] the absolute value of the auxiliary function
$$g(t):=\frac{x(t)-t}{r(t)-R}$$
is less than one,
$$1>g^2(t)+\frac{1-\sgn y(t)}{2} \cdot\frac{y^2(t)}{\left (r(t)-R\right)^2},$$
\item [(iii)] the function $$\frac{g(t)}{\sqrt{1-g^2(t)}}$$ is monotone increasing and the compatibility condition \eqref{compcirc} is satisfied.
\end{itemize}
Then the functions $x$ and $y$ give the equidistant parameterization of the graph of the equidistant function belonging to $f\colon \mathbb{R}\to \mathbb{R}$, where
\begin{equation}
\label{function2d:thm}
f(t)=y(t)+\left(r(t)-R\right)\sqrt{1-g^{2}(t)}
\end{equation}
for any $t\in \mathopen]t_0^-, t_0^+\mathclose[$. If $x\colon \mathopen]t_0^-, t_0^+\mathclose[\to \mathbb{R}$ is a one-to-one correspondence with nowhere vanishing derivative, then its domain is maximal for the equidistant parameterization. 
\end{theorem}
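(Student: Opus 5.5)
Define $f$ on $\mathopen]t_0^-, t_0^+\mathclose[$ by \eqref{function2d:thm}. The plan is to reverse the derivation of \eqref{circfunction}, \eqref{derivfunction:circ} and of the compatibility condition, and so show that $f$ is positive and convex with $f'=g/\sqrt{1-g^2}$. Then $(x(t),y(t))=(t,f(t))+s(t)N(t)$ with $s(t)=r(t)-R$, so each point $(x(t),y(t))$ is equidistant from $K$ and $L$.

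First, condition (ii) is exactly the positivity of $f$, by the case discussion preceding the theorem: for $y\geq 0$ this is immediate, and for $y<0$ it is the displayed inequality. Condition (i) makes $r-R>0$, so $g$ and $f$ are well defined and $C^1$.

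Next I show $f'=g/\sqrt{1-g^2}$. Put $F=y+(r-R)\sqrt{1-g^2}$ and differentiate using the formulas for $g'$ and $r'$ from the proof of the compatibility theorem. The chain of equivalences in that proof is reversible: each step multiplies by the nonzero factor $\sqrt{1-g^2}$ or adds terms. Hence \eqref{compcirc} gives $F'=g/\sqrt{1-g^2}$. By (iii) this derivative is monotone increasing, so $f$ is convex and $C^1$; in fact $f'$ is continuous because $g$ is.

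Then $g=f'/\sqrt{1+f'^2}$ and $\sqrt{1-g^2}=1/\sqrt{1+f'^2}$. The definition of $g$ gives $x=t+(r-R)f'/\sqrt{1+f'^2}$, and \eqref{function2d:thm} gives $y=f-(r-R)/\sqrt{1+f'^2}$. These are \eqref{eqcirc:x1} and \eqref{eqcirc:y1}, i.e.\ $(x,y)=(t,f(t))+(r-R)N(t)$. Because $L$ is convex and $N(t)$ is the outer normal at $(t,f(t))$, the distance from $(x,y)$ to $L$ is $r-R$, and $r-R$ is also its distance to $K$. So $(x(t),y(t))$ is an equidistant point. Solving the same quadratic as in the proof of \eqref{dist:02} shows that $r-R=s(t)$ as in \eqref{dist:02}, and that $\alpha(t)<R$, since $s>0$ and the numerator $t^2+f^2-R^2$ is positive. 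Thus every $t$ in the interval is admissible, and $x,y$ coincide with \eqref{eqcirc:x}, \eqref{eqcirc:y}.

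\textbf{Main obstacle.} The theorem only defines $f$ on the interval, while the focal set needs $f$ on all of $\mathbb{R}$. For the first claim I will extend $f$ convexly, positively and $C^1$, for instance by tangent-line continuation beyond the endpoints, and check that $K\cap L=\emptyset$ still holds. It is enough that the extension stays above the tangent lines at the endpoints, which lie outside $K$ by the limit of $\alpha\le R$.

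For the maximality claim, suppose $x$ is a bijection onto $\mathbb{R}$. By uniqueness of the equidistant point over each abscissa (the lemma before this subsection), every equidistant point is already parametrized. By Theorem \ref{derivnonzero:x}, the equidistant parameterization over the maximal admissible domain is also a bijection onto $\mathbb{R}$. Since the foot point $t$ of the equidistant point over a given abscissa is unique, the two parameter domains must agree. Hence no admissible parameter lies outside $\mathopen]t_0^-, t_0^+\mathclose[$, i.e.\ the domain is maximal.
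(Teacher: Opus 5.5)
Your core argument is the paper's proof. You define $f$ by \eqref{function2d:thm}, get positivity from (ii), and reverse the chain of equivalences behind \eqref{compcirc} to obtain $f'=g/\sqrt{1-g^2}$. This gives convexity by (iii); in fact $f\in C^2$, since $g$ is $C^1$. You then observe that $(x(t),y(t))-(t,f(t))=(r(t)-R)N(t)$ is a downward normal of length $r(t)-R=d((x(t),y(t)),K)$, so by convexity the distance to $L$ is attained at $(t,f(t))$. Your additional admissibility check is a good complement, but you assert rather than prove that $t^2+f^2(t)-R^2>0$. It follows from $|(t,f(t))|\geq r(t)-(r(t)-R)=R$. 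Equality would force $(t,f(t))$ to be a positive multiple of $N(t)$, which is impossible because $f(t)>0$ while the second component of $N(t)$ is negative.

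The step you call the main obstacle is where the proposal goes wrong.

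\emph{The stated reason is false.} A tangent line misses the closed disc $K$ only if $|\alpha|>R$. The closed half-plane above it, which contains $L$, misses $K$ only if $\alpha<-R$. The bound $\alpha\le R$ gives neither, and for $\alpha$ close to $R$ the disc lies mostly on the same side as $L$.

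\emph{The extension can fail outright.} Conditions (i)--(iii) give no control at a finite endpoint.
\begin{itemize}
\item $g$ may tend to $\pm1$, so $f'$ blows up and no real-valued convex continuation exists. An example is $f(t)=3R-\sqrt{R^2-t^2}$ on $\mathopen]-R,R\mathclose[$, for which $x$ is even onto $\mathbb{R}$.
\item Continuing by the tangent ray at a right endpoint where $f'<0$ eventually makes $f$ negative.
\item $(t,f(t))$ may converge to a point of $\partial K$. Take a convex $f$ touching $\partial K$ from outside at a single point of abscissa $b>0$, and generate $x,y$ by \eqref{eqcirc:x}, \eqref{eqcirc:y} on an admissible interval $\mathopen]a,b\mathclose[\ni 0$. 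Then $r(t)\to R$, and no continuous extension has an epigraph disjoint from $K$.
\end{itemize}

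\emph{The extension is also unnecessary.} The paper's argument only uses that $L$ is convex, contains $(t,f(t))$, and lies in the half-plane $\langle P-(t,f(t)),N(t)\rangle\le 0$. This holds for the closed epigraph of $f$ over the interval, and disjointness of $K$ and $L$ plays no role in the distance computation.

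Your maximality argument is correct whenever $f$ is the restriction of a positive, convex $C^1$ function on $\mathbb{R}$ whose epigraph misses $K$. Every $t$ in the interval is admissible with $x$ given by \eqref{eqcirc:x'}. The parameterization on the full admissible domain is injective by Theorem \ref{derivnonzero:x}. So surjectivity on the interval forces the two domains to coincide. The paper leaves this last sentence of the theorem unproved, so here you add something genuine. Be aware, though, that by the first example above such a global $f$ need not exist, even when $x$ is onto $\mathbb{R}$.
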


\begin{proof} As we have seen above, the conditions allow the application of the formula \eqref{function2d:thm} to give a well-defined, positive-valued function. Using the compatibility condition, its derivative can be written as
$$f'(t)=\frac{g(t)}{\sqrt{1-g^2(t)}}$$
and, consequently, it is a convex, twice continuously differentiable function due to (iii). (Observe that the derivative of the function contains only the functions $x$ and $y$ without their derivatives.) Let us consider the difference vector pointing from $(t,f(t))$ to $(x(t),y(t))$. Its coordinates at the parameter $t$ are
$$d_1(t)=x(t)-t= (r(t)-R) g(t),$$
$$d_2(t)=y(t)-f(t)=-(r(t)-R)\sqrt{1-g^2(t)}.$$
It is easy to see that the difference vector is normal to the graph of $f$ at the parameter $t$ because
$$ \left( d_1(t), d_2 (t)\right) \begin{pmatrix} 1 \\ f'(t) \end{pmatrix} = (r(t)-R) \left( g(t)-\sqrt{1-g^2(t)}\frac{g(t)}{\sqrt{1-g^2(t)}} \right)=0. $$
In particular, it is a downward-pointing normal since the second coordinate is negative. Thus, the distance of $(x(t),y(t))$ from the graph of $f$ is attained at $(t,f(t))$:
$$\sqrt{d_1^2(t)+d_2^2(t)}=(r(t)-R) \sqrt{g^2(t)+(1-g^2(t))}= r(t)-R=d((x(t),y(t)),K). $$
This means that for any $t\in \mathopen]t_0^-, t_0^+\mathclose[$, the equidistant point closest to the graph point $(t,f(t))$ is $(x(t), y(t))$, that is we have the equidistant parametrization of the equidistant function.
\end{proof}

\begin{exercise} Prove that the equidistant function is the envelope of the upper branches of the one-parameter family of hyperbolas one of whose focal points runs along the function $f$ and the other one is uniformly given at the origin.  
\end{exercise}
Hint. Using that we have an equidistant parameterization, 
$$\sqrt{(x(t)-t)^2+(y(t)-f(t))^2}=r(t)-R$$
that is the equidistant point at the parameter $t$ satisfies equation
$$\sqrt{(x-t)^2+(y-f(t))^2}-\sqrt{x^2+y^2}=-R$$
of the hyperbola with focal points $(t,f(t))$ and $(0,0)$ such that it is closer to $(t,f(t))$ than to the origin. On the other hand, the gradient of the hyperbola at the parameter $t$ is
$$\left(\frac{x(t)-t}{\sqrt{(x(t)-t)^2+(y(t)-f(t))^2}}-\frac{x(t)}{r(t)}, \frac{y(t)-f(t)}{\sqrt{(x(t)-t)^2+(y(t)-f(t))^2}}-\frac{y(t)}{r(t)}\right),$$ 
where
$$\frac{x(t)-t}{\sqrt{(x(t)-t)^2+(y(t)-f(t))^2}}-\frac{x(t)}{r(t)}=\frac{x(t)-t}{r(t)-R}-\frac{x(t)}{r(t)}=g(t)-\frac{x(t)}{r(t)},$$
$$\frac{y(t)-f(t)}{{\sqrt{(x(t)-t)^2+(y(t)-f(t))^2}}}-\frac{y(t)}{r(t)}=\frac{y(t)-f(t)}{r(t)-R}-\frac{y(t)}{r(t)}=-\left(\sqrt{1-g^2(t)}+
\frac{y(t)}{r(t)}\right).$$ 
The compatibility condition \eqref{compcirc} shows that the gradient of the hyperbola is orthogonal to the tangent vector of the equidistant function. Therefore they have the same tangent line point by point.

\subsubsection{The characterization of equidistant functions} 
Let $f$ be a twice continuously differentiable, positive-valued, convex function. In what follows we are looking for expressions of the equidistant parameterization in terms of the equidistant function $G=y\circ x^{-1}$. Since $G(x(t))=y(t)$, it follows that 
$$G'(x(t))x'(t)=y'(t)$$
and the compatibility condition \eqref{compcirc} shows that
$$G'(x(t))=\frac{g(t)-x_0(t)}{\sqrt{1-g^2(t)}+y_0(t)},$$
where
$$x_0(t)=\frac{x(t)}{r(t)}, \ \ y_0(t)=\frac{y(t)}{r(t)}\ \ \textrm{and}\ \ \sqrt{1-g^2(t)}+y_0(t)>0$$
because of $f(t)>0$. Therefore 
$$\sqrt{1-g^2(t)}G'(x(t))=g(t)-\left(x_0(t)+y_0(t)G'(x(t))\right)$$
and, by taking the square of both sides, we have a quadratic equation
$$0=(1+G'^2(x))g^2-2(x_0+y_0 G'(x))g+(x_0+y_0 G'(x))^2-G'^2(x).$$
Using that $x_0^2+y_0^2=1$, a straightforward calculation shows that
$$g_1(t)=\frac{x_0(t)(1-G'^2(x(t)))+2y_0(t) G'(x(t))}{1+G'^2(x(t))} \ \ \textrm{or} \ \ g_2(t)=x_0(t).$$
It is easy to see that $g(t)=x_0(t)$ implies that $G'(x(t))=0$ and, consequently, $g_1(t)=g_2(t)$. Thus we have\footnote{The solution \eqref{equidfunction:01} can also be written into the form
$$g(t)=x_0(t)+2\frac{y_0(t)-x_0(t)G'(x(t))}{1+G'^2(x(t))}G'(x(t)).$$
This is the formal analogue of the formula for higher dimensional spaces.} that 
\begin{equation}
\label{equidfunction:01}
g(t)=\frac{x_0(t)(1-G'^2(x(t)))+2y_0(t) G'(x(t))}{1+G'^2(x(t))}.
\end{equation}
In a more detailed form,                                                                
$$\frac{x(t)-t}{r(t)-R}=\frac{x_0(t)(1-G'^2(x(t)))+2y_0(t)G'(x(t))}{1+G'^2(x(t))}$$
and
$$t=x(t)-\frac{r(t)-R}{r(t)}\cdot\frac{x(t)(1-G'^2(x(t)))+2y(t)G'(x(t))}{1+G'^2(x(t))},$$
where
$$y(t)=G(x(t))\ \ \textrm{and} \ \ r(t)=\sqrt{x^2(t)+G^2(x(t))}.$$
Introducing the function
\begin{equation}
\label{equidfunction:02}
h(x):=x-\frac{\sqrt{x^2+G^2(x)}-R}{\sqrt{x^2+G^2(x)}}\cdot\frac{x(1-G'^2(x))+2G(x)G'(x)}{1+G'^2(x)},
\end{equation}
it follows that $h(x(t))=t$. Since the function \eqref{equidfunction:02}
depends only on the equidistant function  we can formulate the following result as an application of Theorem \ref{equidpar2d:thm}.

\begin{theorem}
Let $G\colon \mathbb{R}\to \mathbb{R}$ be a twice continuously differentiable function and suppose that the function $h\colon \mathbb{R}\to \mathopen]t_0^-, t_0^+\mathclose[$ defined by formula \eqref{equidfunction:02} is a one-to-one correspondence with nowhere vanishing derivative and its range is a possibly degenerate, open interval containing the origin. The function $G$ is an equidistant function if and only if the pair of functions $x(t):=h^{-1}(t)$ and $y(t):=G(x(t))$ is the equidistant parameterization for the graph of $G$. 
\end{theorem}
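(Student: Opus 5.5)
We prove the two implications separately. The forward implication comes from the derivation given just before the statement. The reverse implication is an application of Theorem \ref{equidpar2d:thm}.

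\emph{Necessity.} Suppose $G$ is the equidistant function belonging to $K$ and some admissible $f$. By Theorem \ref{derivnonzero:x}, the first coordinate $x(t)$ of the equidistant parameterization is a one-to-one correspondence from $\mathopen]t_0^-,t_0^+\mathclose[$ onto $\mathbb{R}$ with $x'(t)>0$. Hence $G=y\circ x^{-1}$ is well defined. Moreover, $G'(x(t))=y'(t)/x'(t)$.

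Using Remark \ref{compcoeffpos}, the compatibility condition \eqref{compcirc} can be solved for $G'(x(t))$. The resulting quadratic equation in $g$ gives the roots $g_1$ and $g_2=x_0$. We must select the correct root. The squaring step requires $\sqrt{1-g^2}\,G'=g-(x_0+y_0G')$, and the root $g_2=x_0$ satisfies this only when $G'=0$, in which case $g_1=g_2$. Therefore \eqref{equidfunction:01} holds in every case.

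Unwinding $g=(x-t)/(r-R)$ then gives $h(x(t))=t$, that is, $h=x^{-1}$. So $x=h^{-1}$ and $y=G\circ x$, which is precisely the pair described in the statement. This also shows that $h$ automatically has the required properties.

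\emph{Sufficiency.} Suppose the pair $x=h^{-1}$, $y=G\circ h^{-1}$ is the equidistant parameterization for the graph of $G$. We read this as saying the pair satisfies conditions (i)--(iii) of Theorem \ref{equidpar2d:thm}. Both $x$ and $y$ are $C^1$, since $G\in C^2$ and $h$ is a $C^1$ bijection with nonvanishing derivative.

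Theorem \ref{equidpar2d:thm} then produces a positive, convex, twice continuously differentiable function $f$, given by \eqref{function2d:thm}. For this $f$, each point $(x(t),y(t))$ lies at distance $r(t)-R$ both from $K$ and from the epigraph $L$ of $f$; the distance to $L$ is attained at $(t,f(t))$.

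Since $x=h^{-1}$ maps onto $\mathbb{R}$, every vertical line contains such a point, namely $(x,G(x))$. The uniqueness lemma of this section shows that the equidistant point on each vertical line is unique. Therefore the graph of $G$ is exactly the equidistant set $\{K=L\}$, and $G$ is an equidistant function.

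We also check disjointness of $K$ and $L$. Since $f$ is convex, $L$ is convex. As in the hyperbola exercise, the vector from $(t,f(t))$ to $(x(t),y(t))$ is a downward normal of the graph of $f$. Given this, we expect $t^2+f^2(t)>R^2$ to follow from $r>R$ together with condition (ii), though this has not been verified.

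\emph{Main obstacle.} The delicate point is the interpretive step in the sufficiency direction. We must make precise what "is the equidistant parameterization" means when it is checked using only $G$, so that Theorem \ref{equidpar2d:thm} genuinely applies.

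We also need the consistency $g(t)=(x(t)-t)/(r(t)-R)$, meaning that $h(x(t))=t$ reproduces formula \eqref{equidfunction:01}. We then need this $g$ to satisfy the compatibility condition \eqref{compcirc}. The plan is to reverse the algebra: the root $g_1$ does satisfy the unsquared relation, and this should be checked carefully using $x_0^2+y_0^2=1$.

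Finally, we must confirm that the surjectivity of $x$ gives maximality of the domain, as asserted in the last clause of Theorem \ref{equidpar2d:thm}.
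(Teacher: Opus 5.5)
Your route is the paper's: necessity is the computation preceding the statement (solve \eqref{compcirc} for $G'$, discard the root $x_0$, arrive at $h(x(t))=t$), and sufficiency is Theorem~\ref{equidpar2d:thm} applied to $(h^{-1},G\circ h^{-1})$, with the hypothesis read as (i)--(iii). Under that reading the argument works, and invoking the uniqueness lemma explicitly is a good addition. However, you leave one needed step open and make two incorrect claims.

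\emph{The open step.} The uniqueness lemma presupposes that $K$ and $L$ are disjoint, and this is immediate. Since $(t,f(t))=(x,y)-(r-R)\bigl(g,-\sqrt{1-g^2}\bigr)$, the triangle inequality gives $\sqrt{t^2+f^2(t)}\ge r-(r-R)=R$. Equality holds only if $(t,f(t))=R(x_0,y_0)$ and $\bigl(g,-\sqrt{1-g^2}\bigr)=(x_0,y_0)$. That forces $f(t)=Ry_0<0$, contradicting (ii). Points above the graph are no closer to the origin.

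\emph{First incorrect claim.} In the necessity part, $g_2=x_0$ does not satisfy $\sqrt{1-g^2}\,G'=g-x_0-y_0G'$ ``only when $G'=0$''. Substituting gives $(|y_0|+y_0)G'=0$, which holds for every $G'$ when $y_0<0$. Exclude this root as the paper does: use the solved form $G'=(g-x_0)/(\sqrt{1-g^2}+y_0)$, whose denominator is positive by Remark~\ref{compcoeffpos}.

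\emph{Second incorrect claim.} The ``reverse algebra'' in your last paragraph fails. Set $\lambda_1=\frac{y_0(1-G'^2)-2x_0G'}{1+G'^2}$. One checks that $g_1^2+\lambda_1^2=1$ and $g_1-x_0-y_0G'=\lambda_1G'$. Hence $g_1$ satisfies the unsquared relation only where $G'=0$ or $\lambda_1\ge 0$; for instance, $x_0=y_0=1/\sqrt{2}$ and $G'=2$ give $\lambda_1<0$. So $h(x(t))=t$ alone does not yield \eqref{compcirc}. The extra sign condition $\sqrt{1-g^2}=\lambda_1$ is the planar form of \eqref{equidfunction3d:01}. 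This is exactly why compatibility must be part of the hypothesis, as in the reading you and the paper adopt, so drop that plan.

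The maximality clause needs nothing more. The map $x=h^{-1}$ is a bijection onto $\mathbb{R}$ with nonvanishing derivative, which is precisely the hypothesis of the last clause of Theorem~\ref{equidpar2d:thm}.
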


\begin{remark} {\emph{As we have seen above, the focal sets are the circle centered at the origin with radius $R$ such that $\sqrt{x^2+G^2(x)}>R$ and the epigraph of the positive-valued, twice continuously differentiable, convex function $f\colon \mathbb{R}\to \mathbb{R}$ defined by formula \eqref{function2d:thm}.}}
\end{remark} 

\subsubsection{Angular relationships} 

$$\begin{tblr}{|c|c|c|c|}
\hline 
& \sin & \cos & \tan \\ 
\hline
\rho_f(t) & \dfrac{f(t)}{\sqrt{t^2+f^2(t)}} &
\dfrac{t}{\sqrt{t^2+f^2(t)}} &
\dfrac{f(t)}{t} \\ 
\hline 
\rho_e(t) & \dfrac{y(t)}{r(t)} &
\dfrac{x(t)}{r(t)} &
\dfrac{y(t)}{x(t)} \\ 
\hline 
\theta_f(t) & \dfrac{f'(t)}{\sqrt{1+f'^2(t)}}=g(t) &
\dfrac{1}{\sqrt{1+f'^2(t)}}=\sqrt{1-g^2(t)} &
f'(t)=\dfrac{g(t)}{\sqrt{1-g^2(t)}} \\ 
\hline 
\theta_e(t) & \dfrac{y'(t)}{\sqrt{x'^2(t)+y'^2(t)}} &
\dfrac{x'(t)}{\sqrt{x'^2(t)+y'^2(t)}} &
\dfrac{y'(t)}{x'(t)} \\ 
\hline 
\end{tblr}$$

\vspace{0.5cm}

We are going to present the compatibility condition \eqref{compcirc} in terms of polar and inclination angles. Let us denote by $\rho_f(t)$ and $\rho_e(t)$ the polar angles of the graph point $(t,f(t))$ and the equidistant point $(x(t),y(t))$, respectively. Also, let us denote by $\theta_f(t)$ and $\theta_e(t)$ the inclination angles of the tangent lines of the graph of $f$ and the equidistant function at the parameter $t$, respectively. In terms of these functions, \eqref{compcirc} can be written as
$$ \big[\sin(\theta_f(t))-\cos(\rho_e(t))\big]x'(t)=\big[\cos(\theta_f(t))+\sin(\rho_e(t))\big]y'(t).$$
Using Remark \ref{compcoeffpos} and Theorem \ref{derivnonzero:x}, 
$$ \tan(\theta_e(t))=\dfrac{y'(t)}{x'(t)} =  \dfrac{\sin(\theta_f(t))-\cos(\rho_e(t))}{\cos(\theta_f(t))+\sin(\rho_e(t))}. $$
As the sum-to-product identities show,
$$ \dfrac{\sin(\alpha)-\cos(\beta)}{\cos(\alpha)+\sin(\beta)}=
\dfrac{\sin(\alpha)-\sin(\frac{\pi}{2}-\beta)}{\cos(\alpha)+\cos(\frac{\pi}{2}-\beta)} = \tan\left(\frac{\alpha+\beta}{2}-\frac{\pi}{4} \right),$$
and we have that
$$ \tan(\theta_e(t)) = \tan\left(\frac{\theta_f(t)+\rho_e(t)}{2}-\frac{\pi}{4} \right) \ \ \Rightarrow \ \ \theta_e(t) = \frac{\theta_f(t)+\rho_e(t)}{2}-\frac{\pi}{4} \ \pmod{\pi}.  $$

\subsection{The parametric expressions of the equidistant points in higher dimensional spaces}

In what follows we are going to summarize the higher dimensional versions of the most important results as we have seen above in case of dimension two. Let $K\subset \mathbb{R}^{n+1}$ be a sphere centered at the origin of the Euclidean space and consider the focal set $L$ as the epigraph of a positive-valued, continuously differentiable convex function $f\colon \mathbb{R}^n\to \mathbb{R}^+$ such that
$$|t|^2+f^2(t) > R^2 \quad (t\in \mathbb{R}^n),$$
where $R$ is the radius of the sphere. Using the outer unit normal 
$$N=\frac{1}{\sqrt{1+|\nabla f|^2}} (\nabla f , -1)$$
to the graph of the function $f$, any equidistant point $(x, y)$ can be given in the parametric form
$$x(t)= t + \frac{s(t) }{\sqrt{1+|\nabla f |^2(t)}} \nabla f(t)\ \ \textrm{and}\ \ y(t) = f(t) - \frac{s(t)}{\sqrt{1+|\nabla f|^2(t)}},$$ 
where $(t,f(t))$ is the closest point of the epigraph to $(x,y)$ and
$$s(t)=\sqrt{|x(t)|^2+y^2(t)} -R.$$
Therefore
$$(s(t)+R)^2=|x(t)|^2+y^2(t)= \left |t + \frac{s(t) }{\sqrt{1+|\nabla f |^2(t)}} \nabla f(t)\right |^2+\left(f(t) - \frac{s(t)}{\sqrt{1+|\nabla f|^2 (t)}}\right)^2$$
and, consequently,
$$s(t)=\frac{1}{2}\frac{|t|^2+f^2(t)-R^2}{R-\alpha(t)}$$
provided that 
\begin{equation}
\label{condsphere:01} 
\alpha(t):=\frac{\langle t, \nabla f (t)\rangle-f(t)}{\sqrt{1+|\nabla f |^2 (t)}}< R,
\end{equation}
where $\alpha$ measures the signed distance of the tangent hyperplane to $f$ from the origin\footnote{The signed distance is negative if the origin is under the tangent hyperplane of the function $f$ and positive if the origin is over the tangent hyperplane of $f$.}. The parametric form of the equidistant points are
\begin{equation} 
\label{eqsphere:x}
 x(t)=t + \frac{1}{2\sqrt{1+| \nabla f|^2 (t)}}\cdot \frac{|t|^2+f^2(t)-R^2}{R-\alpha(t)}\nabla f (t),
\end{equation}
and
\begin{equation} 
\label{eqsphere:y}
y(t)=f(t) - \frac{1}{2\sqrt{1+|\nabla f|^2(t)}} \cdot \frac{|t|^2+f^2(t)-R^2}{R-\alpha(t)}.
\end{equation}

\subsubsection{Critical parameters}
The domain $D\subset \mathbb{R}^n$ for the parameters of the expressions \eqref{eqsphere:x} and \eqref{eqsphere:y} obviously contains the origin together with an open neighbourhood because of
$$\alpha(0)=\frac{-f(0)}{\sqrt{1+|\nabla f |^2 (0)}}< R.$$
Let $S_{n-1}\subset \mathbb{R}^n$ be the Euclidean unit sphere centered at the origin, $u\in S_{n-1}$, and define the slit (real) function $\varphi(r):=f(ru)$ where $r\geq 0$. We are going to compare the set $D$ of admissible parameters (for $f$) to the sets $D_u$ of admissible parameters for $\varphi$. Applying the reasoning from the 1-dimensional case for the real function $\varphi$, there exists a positive critical parameter $t_u^+$ along the ray $\{ru \mid r\geq 0\}$ such that
$$\alpha_u(r):=\frac{r\varphi'(r)-\varphi(r)}{\sqrt{1+\varphi'^2(r)}}< R \qquad (0\leq r< t_u^+),$$
so the admissible parameters for $\varphi$ form the segment (ray) $D_u=\{ru \mid 0\leq r < t_u^+\}$.

On the other hand, writing $t=ru$ (where $0\leq r < t_u^+$), because of
$$ \varphi'(r)=\langle \nabla f (ru), u\rangle,  \quad \langle t, \nabla f (t)\rangle = \langle ru, \nabla f (ru)\rangle=r \varphi'(r),$$
and
$$ \varphi'^2(r)=\langle \nabla f (ru), u\rangle^2\leq |\nabla f |^2(ru),$$
we can see that\footnote{Since $\alpha_u(r)$ is the signed distance of the tangent line of $\varphi$ from the origin, and $\alpha(t)$ is the signed distance of the tangent hyperplane of $f$ from the origin (at the same parameter $t=ru$), this inequality can also be verified geometrically.} 
\begin{equation}
\label{condsphere:02} 
\abs{\alpha(t)}=\frac{\abs{\langle t, \nabla f (t)\rangle-f(t)}}{\sqrt{1+|\nabla f |^2 (t)}} =
\frac{\abs{r\varphi'(r)-\varphi(r)}}{\sqrt{1+|\nabla f |^2 (t)}} \leq
\frac{\abs{r\varphi'(r)-\varphi(r)}}{\sqrt{1+\varphi'^2(r)}} = \abs{\alpha_u(r)}
\end{equation}
and, since the numerators are the same, $ \sgn \alpha(t) = \sgn \alpha_u(r)$. Therefore, if $\alpha_u(r)<R$, then
$$ \text{either} \ \ \alpha_u(r)\leq \alpha(t) \leq 0 <R  \ \ \text{or} \ \
0 \leq \alpha(t) \leq \alpha_u(r)<R, $$
so $\alpha(t)<R$ is also implied, i.e. parameters admissible for $\varphi$ are also admissible for $f$. Thus we have a star-shaped subdomain 
$$\bigcup_{u\in S_{n-1}} D_u \subset D.$$
In general, this subdomain can be strictly smaller than $D$, because the intersections of $D$ with central rays might continue after the critical parameters $t_u^+$ (they can even be disconnected), as the next example shows.

\begin{example} {\emph{Consider the function $f(t_1, t_2)= t_1^2+20t_1t_2+1000t_2^2-30t_2+1$. It is strictly convex because the kernel matrix has minors $\Delta_1=1$ and $\Delta_2=1000-10^2=900$, and there is a local minimum at $(t_1,t_2)=(-1/6,1/60)$ with minimum value $3/4$, which is also a global minimum, so $f$ is positive everywhere. For this function $f$,
$$\nabla f (t_1,t_2)=(2t_1+20t_2, \, 20t_1+2000t_2-30),$$
$$\alpha(t_1,t_2)= \dfrac{t_1^2+20t_1t_2+1000t_2^2-1}{\sqrt{1+(2t_1+20t_2)^2+(20t_1+2000t_2-30)^2}}.$$
Choosing the direction vector $u=(1,0)$, along this ray, we have the slit function
$$ \varphi(r)=f(r,0)=r^2+1 \quad \Rightarrow \quad \varphi'(r)=2r 
 \quad \Rightarrow \quad \alpha_u(r)=\dfrac{r^2-1}{\sqrt{1+4r^2}};$$
setting $R=0.3$, $D_u=\mathopen[0,t_u^+\mathclose[=\mathopen[0,1.37\mathclose[$. On the other hand,
$$\alpha(r,0)= \dfrac{r^2-1}{\sqrt{1+(2r)^2+(20r-30)^2}}=\dfrac{r^2-1}{\sqrt{404r^2-1200r+901}},$$
and the intersection of $D$ with the given ray consists of multiple disconnected segments, as shown in Figure \ref{counterex2d}.}}
\end{example}

\begin{figure}[h!] 
\includegraphics[width=.8\linewidth]{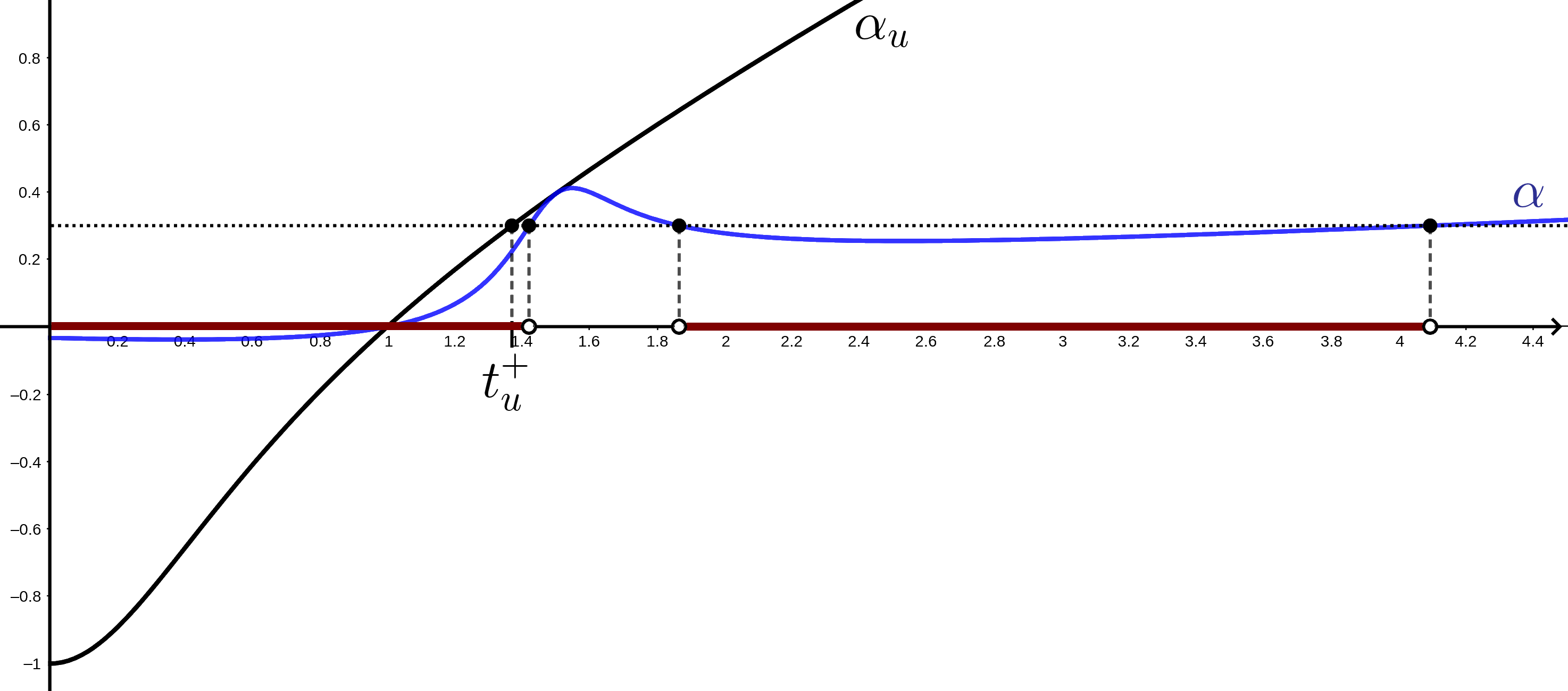}
\caption{\label{counterex2d} The signed distance functions $\alpha$ and $\alpha_u$ in case of $f(t_1, t_2)=t_1^2+20t_1t_2+1000t_2^2-30t_2+1$ with $u=(1,0)$.}
\end{figure}

\subsubsection{The expression of the mappings $f$ and $\nabla f$ in terms of the equidistant parameterization \eqref{eqsphere:x} and \eqref{eqsphere:y}}

Introducing the auxiliary functions
$$r(t)=\sqrt{|x(t)|^2+y^2(t)}>R\ \ \textrm{and}\ \ g(t)=\frac{x(t)-t}{r(t)-R},$$
we can repeat the calculations similarly to the two-dimensional case to conclude that  
$$g(t)=\frac{\nabla f}{\sqrt{1+|\nabla f|^2}}.$$
In particular, $|g(t)|<1$ and
$$\sqrt{1-|g(t)|^2}=\frac{1}{\sqrt{1+|\nabla f|^2}}.$$
Therefore
$$y(t)=f(t)-(r(t)-R)\sqrt{1-|g(t)|^2}$$
and we have
\begin{equation}
\label{compeq:higher01}
f(t)=y(t)+(r(t)-R)\sqrt{1-|g(t)|^2}.
\end{equation}
On the other hand
\begin{equation}
\label{compeq:higher02}
\nabla f(t)=\frac{g(t)}{\sqrt{1-|g(t)|^2}}.
\end{equation}

\subsubsection{The case of a twice continuously differentiable function: the compatibility of the equidistant parameterization} Let $f$ be a twice continuously differentiable (positive-valued convex) function. In this case the equidistant parameterization consists of continuously differentiable functions. Using the derivatives
$$\partial_i f(t)=\partial_i y(t)+\partial_i r(t)\sqrt{1-|g(t)|^2}-\left(r(t)-R\right)\frac{\langle g(t), \partial_i g(t) \rangle}{\sqrt{1-|g(t)|^2}},$$
$$\partial_i g(t)=\frac{(\partial_i x(t)-e^i)(r(t)-R)-(x(t)-t)\partial_i r(t)}{(r(t)-R)^2}=\frac{\partial_i x(t)-e^i-g(t)\partial_i r(t)}{r(t)-R},$$
$$\partial_i r(t)= \frac{\langle x(t), \partial_i x(t) \rangle +y(t)\partial_i y(t)}{r(t)}=\left\langle \frac{x(t)}{r(t)}, \partial_i x(t) \right\rangle +\frac{y(t)}{r(t)}\partial_i y(t), $$
the compatibility condition for the equidistant parameterization, obtained by comparing the right-hand sides of formulas \eqref{compeq:higher01} and \eqref{compeq:higher02}, is
\begin{align*}
\frac{g^i(t)}{\sqrt{1-|g(t)|^2}} &= \partial_i y(t)+\partial_i r(t)\sqrt{1-|g(t)|^2}-\left(r(t)-R\right)\frac{\langle g(t), \partial_i g(t) \rangle}{\sqrt{1-|g(t)|^2}}, \\
g^i(t) &= \sqrt{1-|g(t)|^2} \, \partial_i y(t)+(1-|g(t)|^2) \, \partial_i r(t)- \left(r(t)-R\right) \langle g(t), \partial_i g(t) \rangle,  \\
g^i(t) &= \sqrt{1-|g(t)|^2} \, \partial_i y(t)+(1-|g(t)|^2) \, \partial_i r(t)- \langle g(t), \partial_i x(t)-e^i-g(t)\partial_i r(t) \rangle,  \\
0      &= \sqrt{1-|g(t)|^2} \, \partial_i y(t)+ \partial_i r(t)- \langle g(t), \partial_i x(t) \rangle,  \\
0      &= \sqrt{1-|g(t)|^2} \, \partial_i y(t)+ \left\langle \frac{x(t)}{r(t)}, \partial_i x(t) \right\rangle +\frac{y(t)}{r(t)}\partial_i y(t) - \langle g(t), \partial_i x(t) \rangle,  \\
0      &= \left(\sqrt{1-|g(t)|^2}+\frac{y(t)}{r(t)}\right)\partial_i y(t) - \left \langle g(t)-\frac{x(t)}{r(t)}, \partial_i x(t)\right \rangle,
\end{align*}
so the compatibility condition is          
\begin{equation}
\label{compeq:higher03}
\left \langle g(t)-\frac{x(t)}{r(t)}, \partial_i x (t)\right \rangle=\left(\sqrt{1-|g(t)|^2}+\frac{y(t)}{r(t)}\right)\partial_i y(t).
\end{equation}

\begin{remark} \label{compcoeffpos:3d} {\emph{In \eqref{compeq:higher03}, the coefficient of $\partial_i y(t)$ on the right-hand side is always strictly positive similarly to the two-dimensional case; see Remark \ref{compcoeffpos}.}}
\end{remark}

\begin{theorem} 
\label{onetoone:3d}
The function $x\colon D\subset \mathbb{R}^n\to \mathbb{R}^n$,
$$ x(t)=t + \frac{1}{2\sqrt{1+| \nabla f|^2 (t)}} \cdot \frac{|t|^2+f^2(t)-R^2}{R-\alpha(t)}\nabla f (t)$$
is a one-to-one correspondence with nowhere vanishing Jacobian.
\end{theorem}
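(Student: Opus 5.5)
Bijectivity comes from the geometry, exactly as in Theorem \ref{derivnonzero:x}, and the Jacobian from the indirect argument of the remark after that theorem. For every $x\in\mathbb{R}^n$ the equidistant point $(x,G(x))$ exists and is unique by Lemma 1 of Section 3 (equivalently Corollary \ref{sum:01}). Since $L$ is convex and closed, the distance from $(x,G(x))$ to $L$ is attained at a unique point $(t,f(t))$. This point lies on the graph and not in the interior of $L$, because $(x,G(x))\notin L$, which follows from $K\cap L=\emptyset$ and the common distance being positive. The computation preceding \eqref{condsphere:01} then shows that $t\in D$ and $x=x(t)$, so $x$ is surjective onto $\mathbb{R}^n$. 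Conversely, for $t\in D$ the point $(x(t),y(t))$ given by \eqref{eqsphere:x}--\eqref{eqsphere:y} is equidistant with foot point $(t,f(t))$. The reason is that it lies on the outer normal ray at distance $s(t)>0$ from the convex set $L$, so its distance to $L$ is $s(t)=r(t)-R$, which is also its distance to $K$. By uniqueness of the equidistant point over $x(t)$ and of its foot point, $x(t_1)=x(t_2)$ forces $t_1=t_2$. So $x$ is a bijection $D\to\mathbb{R}^n$.

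For the Jacobian I take $f\in C^2$, as is implicit in speaking of the Jacobian. I write
$$x(t)=t+\varphi(t)\nabla f(t),\qquad y(t)=f(t)-\varphi(t),\qquad \varphi(t)=\frac{s(t)}{\sqrt{1+|\nabla f|^2(t)}}>0.$$
Suppose the Jacobian is singular at some $t$, and pick a unit vector $v$ with $\partial_v x(t)=\sum_i v^i\partial_i x(t)=0$. Taking the same combination of the compatibility conditions \eqref{compeq:higher03} gives
$$\Bigl(\sqrt{1-|g|^2}+\tfrac{y}{r}\Bigr)\partial_v y(t)=\Bigl\langle g-\tfrac{x}{r},\,\partial_v x(t)\Bigr\rangle=0.$$
By Remark \ref{compcoeffpos:3d} the coefficient is positive, so $\partial_v y(t)=0$.

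Differentiating the two expressions along $v$ gives
$$0=v+\partial_v\varphi\,\nabla f+\varphi\,\mathrm{Hess} f\,v,\qquad 0=\langle\nabla f,v\rangle-\partial_v\varphi.$$
I take the inner product of the first identity with $v$ and substitute $\partial_v\varphi=\langle\nabla f,v\rangle$ from the second:
$$0=1+\langle\nabla f,v\rangle^2+\varphi\,\langle \mathrm{Hess} f\,v,v\rangle\ \ge 1.$$
Here convexity gives $\langle \mathrm{Hess} f\,v,v\rangle\ge 0$, and $\varphi>0$. This is a contradiction, so the Jacobian vanishes nowhere.

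The step I expect to be most delicate is the justification that the compatibility relation \eqref{compeq:higher03} may be used in this differentiated form. It requires that $x$ and $y$ are $C^1$ on the open set $D$. This holds because $\alpha<R$ there, so the denominators in $s$ do not vanish, and $f\in C^2$. One must also remember that \eqref{compeq:higher03} holds for each $i$, so it holds for every directional combination. A direct computation of $\det x'(t)$ would be messy in higher dimensions, which is why I prefer the indirect route.
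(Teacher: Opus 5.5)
Your proposal is correct and follows essentially the same route as the paper: bijectivity comes from the uniqueness of the equidistant point over each $x$ and of its nearest point on the convex epigraph, and the nonvanishing Jacobian comes from the indirect argument (the compatibility condition \eqref{compeq:higher03} forces $\partial_v y(t)=0$, and pairing the differentiated identity with $v$ then gives $0\geq 1$). Your version is in fact slightly more careful than the paper's: you keep the factor $\varphi(t)>0$ in front of the Hessian term, which the paper drops when it writes $\partial_v\nabla f(t)$ and $f''(t)(v,v)$, and you explicitly check that every $t\in D$ actually yields an equidistant point.
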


\begin{proof} Since for any $x\in \mathbb{R}^n$ we have a uniquely determined equidistant point $(x,y)$ together with a uniquely determined closest point $(t,f(t))$, where the distance of $(x,y)$ to the epigraph is attained at, the function $x\colon D\subset \mathbb{R}^n\to \mathbb{R}^n$ is a one-to-one correspondence. If $\partial_v x (t)=0$ then, by the compatibility condition \eqref{compeq:higher03}, it follows that $\partial_v y(t)=0$. Since the parametric expressions are of the form
$$x(t)=t+\varphi(t) \nabla f (t)\ \ \textrm{and}\ \ y(t)=f(t)-\varphi(t),$$
we have that 
$$0=v+\partial_v \varphi (t)\nabla f (t)+\partial_v \nabla f (t)\ \ \textrm{and}\ \ 0=\partial_v f (t)-\partial_v \varphi (t).$$
Therefore
$$0=v+\partial_v f(t)\nabla f (t)+\partial_v \nabla f (t)$$
and, taking the inner product with $v$,
$$0=|v|^2+\left(\partial_v f\right)^2(t)+f''(t)(v,v)\ \ \Rightarrow \ \ v=0,$$
as was to be proved. 
\end{proof}

\subsubsection{Necessary and sufficient conditions for a pair $x(t)$ and $y(t)$ to be the equidistant parameterization of the graph of the equidistant function belonging to $K$ and $L$}

\begin{theorem}
\label{equidpar3d:thm}
Let $x\colon D\subset \mathbb{R}^n\to \mathbb{R}^n$ and $y\colon D\subset \mathbb{R}^n\to \mathbb{R}$ be continuously differentiable functions defined on an open subset $D\subset \mathbb{R}^n$ containing the origin, and suppose that 
\begin{itemize}
\item [(i)] the polar distance is greater than the radius of the circle $K$ centered at the origin, that is
$$r(t):=\sqrt{|x(t)|^2+y^2(t)} > R,$$
\item[(ii)] the norm of the auxiliary function
$$g(t):=\frac{x(t)-t}{r(t)-R}$$
is less than one,
$$1>|g(t)|^2+\frac{1-\sgn y(t)}{2} \frac{y^2(t)}{\left (r(t)-R\right)^2},$$
\item [(iii)] the mapping $$\frac{g(t)}{\sqrt{1-g^2(t)}}$$ is monotone increasing and the compatibility condition \eqref{compeq:higher03} is satisfied.
\end{itemize}
Then the functions $x$ and $y$ give the equidistant parameterization of the graph of the equidistant function belonging to $f\colon \mathbb{R}^n\to \mathbb{R}$, where
\begin{equation}
\label{function3d:thm}
f(t)=y(t)+\left(r(t)-R\right)\sqrt{1-|g(t)|^{2}}
\end{equation}
for any $t\in D$. If $x\colon D\subset \mathbb{R}^n\to \mathbb{R}^n$ is a one-to-one correspondence with nowhere vanishing Jacobian, then its domain is maximal for the equidistant parameters. 
\end{theorem}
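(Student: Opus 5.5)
We follow the proof of Theorem \ref{equidpar2d:thm} step by step, replacing scalar operations by their vector analogues. First, conditions (i) and (ii) make formula \eqref{function3d:thm} well defined and positive. By (i), $r(t)-R>0$, so $g$ is defined and continuously differentiable. Condition (ii) gives $|g(t)|<1$, so the square root is positive. Positivity of $f$ is automatic when $y(t)\geq 0$. When $y(t)<0$, condition (ii) reads $1-|g(t)|^2>y^2(t)/(r(t)-R)^2$; taking square roots gives $(r(t)-R)\sqrt{1-|g(t)|^2}>-y(t)$, that is $f(t)>0$.

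Next we differentiate \eqref{function3d:thm}, using the formulas for $\partial_i r$ and $\partial_i g$ listed before \eqref{compeq:higher03}. The chain of equivalences displayed there is reversible: each line is obtained from the previous one by multiplying by the positive factor $\sqrt{1-|g|^2}$ or by substituting an identity. Running it backwards, the compatibility condition \eqref{compeq:higher03} yields
$$\partial_i f(t)=\frac{g^i(t)}{\sqrt{1-|g(t)|^2}},\qquad\text{i.e. } \nabla f=\frac{g}{\sqrt{1-|g|^2}}.$$
So $f$ is continuously differentiable, and $\nabla f$ involves no derivatives of $x$ and $y$. We read the monotonicity in (iii) as monotonicity of the map $t\mapsto g/\sqrt{1-|g|^2}$ in the sense $\langle \nabla f(t_1)-\nabla f(t_2),t_1-t_2\rangle\geq 0$. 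This is exactly the monotone-gradient characterization of convexity, so $f$ is convex. We expect this to be the main interpretive obstacle: the statement writes $g^2$ and says ``monotone increasing'' for a vector-valued map, and we must fix this meaning. If $x,y$ are only $C^1$ on $D$, we will also need to state that $f$ is defined, and convex, on $D$, which is convex only under extra assumptions; convexity is then taken on $D$ (or on convex subsets), with extension to $\mathbb{R}^n$ left as in the two-dimensional case.

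Then we verify the normal property. The difference vector from $(t,f(t))$ to $(x(t),y(t))$ is
$$\bigl((r-R)g,\,-(r-R)\sqrt{1-|g|^2}\bigr).$$
Its inner product with the tangent vectors $(e_i,\partial_i f)$ is $(r-R)\bigl(g^i-\sqrt{1-|g|^2}\,g^i/\sqrt{1-|g|^2}\bigr)=0$, and its last coordinate is negative. Hence it is a downward normal, that is, a positive multiple of the outer normal $N$ of the epigraph. Its length is $(r-R)\sqrt{|g|^2+1-|g|^2}=r-R$. For a convex epigraph, the closest point of $L$ to a point on an outer normal is the foot of that normal, so $d((x(t),y(t)),L)=r(t)-R=d((x(t),y(t)),K)$. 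Thus $(x(t),y(t))$ is the equidistant point whose closest point in $L$ is $(t,f(t))$. By the uniqueness lemma for spheres, this is the value of the equidistant function at $x(t)$.

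Finally, maximality. Assume $x$ is a one-to-one correspondence onto $\mathbb{R}^n$ with nonvanishing Jacobian. Every $x\in\mathbb{R}^n$ then already has its unique equidistant point parametrized by some $t\in D$. By Theorem \ref{onetoone:3d}, the equidistant parameterization on the full admissible domain is injective. So no admissible parameter outside $D$ can exist: it would produce a second parameter with the same image point $x$, or a second equidistant point over $x$. Hence $D$ is the maximal domain.
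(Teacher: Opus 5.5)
Your proposal is correct and follows the paper's route, which for this theorem is the proof of Theorem \ref{equidpar2d:thm} carried over to $n$ dimensions: positivity of $f$ from (i)--(ii), $\nabla f=g/\sqrt{1-|g|^2}$ by running the reversible compatibility computation backwards, convexity from (iii), and the downward normal of length $r-R$ giving $d((x(t),y(t)),L)=r(t)-R=d((x(t),y(t)),K)$. Your maximality argument (surjectivity of $x$ on $D$ combined with the injectivity in Theorem \ref{onetoone:3d}) and your caveats about the meaning of ``monotone'' and the possible non-convexity of $D$ go beyond the paper, which leaves these points implicit; the paper likewise leaves implicit the disjointness $|t|^2+f^2(t)>R^2$ that you need for the uniqueness lemma, and you should check it. It follows from the triangle inequality $|(t,f(t))|\geq r-(r-R)=R$, whose equality case would force $(t,f(t))=(R/r)(x(t),y(t))$ with $y(t)<0$, hence $f(t)<0$.
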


The proof can be given similarly to the two-dimensional case.

\begin{exercise} Prove that the equidistant function is the envelope of the upper branches of the one-parameter family of hyperbolas one of whose focal points runs along the function $f$ and the other one is uniformly given at the origin.  
\end{exercise}

Hint. The proof can be given similarly to the two-dimensional case.

\subsubsection{The characterization of equidistant functions} Let $f$ be a twice continuously differentiable, positive-valued, convex function.
In what follows we are looking for expressions of the equidistant parameterization in terms of the equidistant function $G=y\circ x^{-1}$. The steps are essentially similar to the two-dimensional case. Since $G(x(t))=y(t)$, it follows that 
$$\langle \nabla G(x(t)), \partial_i x(t)\rangle=\partial_i y(t)$$
and the compatibility condition \eqref{compeq:higher03} together with Theorem \ref{onetoone:3d} shows that
$$g(t)-x_0(t)=\left(\sqrt{1-|g(t)|^2}+y_0(t)\right)\nabla G(x(t)),$$
where
$$x_0(t)=\frac{x(t)}{r(t)}, \ \ y_0(t)=\frac{y(t)}{r(t)}\ \ \textrm{and}\ \ \sqrt{1-g^2(t)}+y_0(t)>0$$
because of $f(t)>0$. Therefore 
\begin{equation}
\label{keyeq:01}
g(t)=\sqrt{1-|g(t)|^2}\nabla G(x(t))+x_0(t)+y_0(t)\nabla G(x(t)).
\end{equation}
Introducing the abbreviation
$$\lambda(t)=\sqrt{1-|g(t)|^2} \ \ \Rightarrow \ \ |g(t)|^2=1-\lambda^2(t),$$
it follows, by taking the norm square of both sides of equation \eqref{keyeq:01}, that
$$|g(t)|^2=(1-|g(t)|^2)|\nabla G|^2(x(t))+2\sqrt{1-|g(t)|^2}\langle \nabla G(x(t)), x_0(t)+y_0(t)\nabla G(x(t))\rangle +$$
$$|x_0(t)+y_0(t)\nabla G(x(t))|^2,$$
that is
$$1-\lambda^2=\lambda^2|\nabla G|^2(x)+2\lambda\langle \nabla G(x), x_0+y_0\nabla G(x)\rangle +|x_0+y_0\nabla G(x)|^2$$
and we have a quadratic equation
$$0=\lambda^2\left(1+|\nabla G|^2(x)\right)+2\lambda\langle \nabla G(x), x_0+y_0\nabla G(x)\rangle +|x_0+y_0\nabla G(x)|^2-1.$$
Using that $x_0^2+y_0^2=1$, a straightforward calculation shows that its discriminant is
$$4\left(y_0-\langle x_0, \nabla G(x) \rangle \right)^2$$
and, consequently,
$$\lambda_1(t)=\frac{y_0(t)(1-|\nabla G|^2(x(t)))-2\langle x_0(t), \nabla G(x(t))\rangle}{1+|\nabla G|^2(x(t))} \ \ \textrm{or} \ \ \lambda_2(t)=-y_0(t).$$
It is easy to see that $\lambda(t)=-y_0(t)$ implies that
$$\sqrt{1-|g(t)|^2}+y_0(t)=0,$$
which is impossible in the sense of Remark \ref{compcoeffpos:3d}. Thus we have that 
\begin{equation}
\label{equidfunction3d:01}
\sqrt{1-|g(t)|^2}=\frac{y_0(t)(1-|\nabla G|^2(x(t)))-2\langle x_0(t), \nabla G(x(t))\rangle}{1+|\nabla G|^2(x(t))}.
\end{equation}
Therefore, by \eqref{keyeq:01},
$$g(t)=\frac{y_0(t)(1-|\nabla G|^2(x(t))-2\langle x_0(t), \nabla G(x(t))\rangle}{1+|\nabla G|^2(x(t))}\nabla G (x(t))+x_0(t)+y_0(t)\nabla G(x(t)).$$
In a more compact form,
\begin{equation}
\label{equidfunction3d:02}
g(t)=x_0(t)+2\frac{y_0(t)-\langle x_0(t), \nabla G(x(t))\rangle}{1+|\nabla G|^2(x(t))}\nabla G (x(t)).
\end{equation}
Since
$$g(t)=\frac{x(t)-t}{r(t)-R},$$
$$t=x(t)-\frac{r(t)-R}{r(t)}\left(x(t)+2\frac{y(t)-\langle x(t), \nabla G(x(t))\rangle}{1+|\nabla G|^2(x(t))}\nabla G (x(t))\right),$$
where
$$y(t)=G(x(t))\ \ \textrm{and} \ \ r(t)=\sqrt{|x(t)|^2+G^2(x(t))}.$$
Introducing the function
\begin{equation}
\label{equidfunction:03}
h(x):=x-\frac{\sqrt{|x|^2+G^2(x)}-R}{\sqrt{|x|^2+G^2(x)}}\left(x+2\frac{G(x)-\langle x, \nabla G(x)\rangle}{1+|\nabla G|^2(x)}\nabla G (x)\right),
\end{equation}
it follows that $h(x(t))=t$. Since the function \eqref{equidfunction:03}
depends only on the equidistant function we can formulate the following result as an application of Theorem \ref{equidpar3d:thm}.

\begin{theorem}
Let $G\colon \mathbb{R}^n\to \mathbb{R}$ be a twice continuously differentiable function and suppose that the function $h\colon \mathbb{R}^n\to D\subset \mathbb{R}^n$ defined by formula \eqref{equidfunction:03} is a one-to-one correspondence with nowhere vanishing derivative and its range is an open set containing the origin. The function $G$ is an equidistant function if and only if the pair of functions $x(t):=h^{-1}(t)$ and $y(t):=G(x(t))$ is the equidistant parameterization for the graph of $G$. 
\end{theorem}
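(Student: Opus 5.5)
The proof splits into the two implications. In both we lean on the higher-dimensional characterization Theorem \ref{equidpar3d:thm}, and on the fact that formula \eqref{equidfunction:03} was derived only from the compatibility condition \eqref{compeq:higher03} and the relation $G=y\circ x^{-1}$.

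\emph{Necessity.} Suppose $G$ is the equidistant function belonging to the sphere $K$ of radius $R$ and the epigraph of a positive-valued, twice continuously differentiable convex $f$. Let $x(t),y(t)$ be its equidistant parameterization \eqref{eqsphere:x}--\eqref{eqsphere:y} on the admissible domain $D$. By Theorem \ref{onetoone:3d}, $x$ is a bijection onto $\mathbb{R}^n$ with nonvanishing Jacobian, so $G=y\circ x^{-1}$ is well defined. The chain rule gives $\langle\nabla G(x(t)),\partial_i x(t)\rangle=\partial_i y(t)$. Combining this with \eqref{compeq:higher03} and the invertibility of the Jacobian yields \eqref{keyeq:01}. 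We then solve the quadratic for $\lambda=\sqrt{1-|g|^2}$, discarding the root $\lambda_2=-y_0$ by Remark \ref{compcoeffpos:3d}; this gives \eqref{equidfunction3d:02} and therefore $h(x(t))=t$. Thus $h=x^{-1}$, so $x=h^{-1}$ and $y=G\circ h^{-1}$, which is the stated pair.

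\emph{Sufficiency.} Suppose $x:=h^{-1}$ and $y:=G\circ x$ satisfy the hypotheses (i)--(iii) of Theorem \ref{equidpar3d:thm}, i.e.\ they form an admissible equidistant parameterization. These are $C^1$ because $G\in C^2$ and $h$ is a $C^1$ diffeomorphism onto its open range $D\ni 0$. Theorem \ref{equidpar3d:thm} then produces $f$ via \eqref{function3d:thm} such that $(x(t),y(t))$ is the equidistant point belonging to $K$ and the epigraph of $f$. Since $x$ is onto $\mathbb{R}^n$, every vertical line is covered, and uniqueness of the equidistant point on each vertical line (the uniqueness lemma of Section 3) shows that this point is $(x,G(x))$. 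Hence $G$ is the equidistant function, and the final clause of Theorem \ref{equidpar3d:thm} gives maximality of the domain.

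\emph{Main obstacle.} The delicate point is to make precise what ``is the equidistant parameterization'' means in the sufficiency direction. The statement should be read as requiring that $(h^{-1},G\circ h^{-1})$ satisfies conditions (i)--(iii). In particular the compatibility condition must be checked, not assumed from the definition of $h$. We would also verify that $f$ from \eqref{function3d:thm} is defined on all of $\mathbb{R}^n$, extending by convexity outside $D$ as in the planar case. In the necessity direction, the square-root sign choice (taking $\lambda_1$ rather than $\lambda_2$) needs care. It is justified exactly by $\sqrt{1-|g|^2}+y_0>0$, which follows from $f>0$ as in Remark \ref{compcoeffpos}.
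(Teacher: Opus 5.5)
Your proposal is correct and follows the paper's route. The paper obtains $h(x(t))=t$ exactly as you do: the chain rule together with \eqref{compeq:higher03} and the invertible Jacobian gives \eqref{keyeq:01}, and the root $\lambda_2=-y_0$ is excluded by Remark \ref{compcoeffpos:3d}; the converse then comes simply from Theorem \ref{equidpar3d:thm}. The two caveats you raise, namely checking compatibility for $(h^{-1},G\circ h^{-1})$ and extending $f$ beyond $D$, are likewise left inside the hypotheses and statement of that theorem in the paper, so citing it puts your argument at the same level of rigor.
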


\begin{remark} {\emph{As we have seen above, the focal sets are the sphere centered at the origin with radius $R$ such that $\sqrt{|x|^2+G^2(x)}>R$ and the epigraph of the positive-valued, twice continuously differentiable, convex function $f\colon \mathbb{R}^n\to \mathbb{R}$ defined by formula \eqref{function3d:thm}.}}
\end{remark} 

\section{Acknowledgement}

Myroslav Stoika is supported by the Visegrad Scholarship Program. M\'ark Ol\'ah has received funding from the HUN-REN Hungarian Research Network.

\end{document}